\numberwithin{equation}{section}
\newtheorem{thm}{Theorem}[section]
\newtheorem{lem}[thm]{Lemma}
\newtheorem{defi}[thm]{Definition}
\newtheorem{rem}[thm]{Remark}
\newtheorem{prop}[thm]{Proposition}
\newtheorem{assum}[thm]{Assumption}
\newtheorem{thmx}{Theorem}
\newtheorem{lemx}{Lemma}
\newtheorem*{Not}{Notation}
\newtheorem*{thmt}{Theorem}
\newcommand{\NN}{\mathbb{N}}	
\newcommand{\ZZ}{\mathbb{Z}}	
\newcommand{\QQ}{\mathbb{Q}}	
\newcommand{\BB}{\mathbb{B}}
\newcommand{\CC}{\mathbb{C}}
\newcommand{\RR}{\mathbb{R}}
\newcommand{\AAA}{\mathbb{A}}
\newcommand{\HH}{\mathcal{H}}
\newcommand{\Dcris}{\mathbb{D}_{\mathrm{crys}}}
\newcommand{\GG}{\Gamma}
\newcommand{\OO}{\mathcal{O}}
\newcommand{\vp}{\varphi}
\newcommand{\Qp}{\QQ_p}
\newcommand{\Zp}{\ZZ_p}
\newcommand{\Brig}{\BB^{+}_{\mathrm{rig},\Qp}}
\newcommand{\BrigE}{\BB^{+}_{\mathrm{rig},E}}
\newcommand{\Mbar}{\underline{M}}
\newcommand{\MatA}{\begin{pmatrix}
0 & -1/vp^{k+1} \\[6pt]
1 & a/vp^{k+1}
\end{pmatrix}}
\newcommand{\MatQ}{\begin{pmatrix}
\alpha & -\beta \\[6pt]
-vp^{k+1} & vp^{k+1}
\end{pmatrix}}
\newcommand{\adeles}{\mathbb{A}_{F}}
\newcommand{\PP}{\mathfrak{p}}
\newcommand{\PPP}{\overline{\mathfrak{p}}}
\newcommand{\qq}{\mathfrak{q}}
\newcommand{\pset}{\{\mathfrak{p},\overline{\mathfrak{p}}\}}
\newcommand{\qqq}{\mathfrak{q}}
\newcommand{\FF}{\mathcal{F}}
\newcommand{\Of}{\mathcal{O}_{F}}
\newcommand{\gl}{\mathrm{GL}_{2}}
\newcommand{\YQ}{Y_{\mathbb{Q}}}
\newcommand{\YF}{Y_{F}}
\newcommand{\YFs}{Y^{*}_{F}}
\newcommand{\YQQ}{Y_{\mathbb{Q},1}}
\newcommand{\YFss}{Y^{*}_{F,1}}
\newcommand{\frkn}{\mathfrak{N}}
\newcommand{\aaa}{\mathfrak{a}}
\newcommand{\frkm}{\mathfrak{m}}
\newcommand{\Las}{L^{\mathrm{As}}}
\newcommand{\Laspo}{L^{\mathrm{As}}(\Psi,\theta,s)}
\newcommand{\frkl}{\mathfrak{l}}
\newcommand{\Ho}{\mathrm{H}^1}
\newcommand{\Ht}{\mathrm{H}^2}
\newcommand{\cgn}{\prescript{}{c}{g_N}}
\newcommand{\cC}{\prescript{}{c}{C}}
\newcommand{\hh}{\mathbb{H}}
\newcommand{\kam}{\kappa_{a/m}}
\newcommand{\asaieisen}{\prescript{}{c}{\Xi}}
\newcommand{\cPhi}{\prescript{}{c}{\Phi}}
\newcommand{\htt}{\mathrm{H}^{2}(\YFss(\frkn),\ZZ)}
\newcommand{\sym}{\mathrm{Sym}}
\newcommand{\tsym}{\mathrm{TSym}}
\newcommand{\mom}{\mathrm{mom}}
\newcommand{\CG}{\mathrm{CG}}
\newcommand{\Zpn}{(\ZZ/p^{n})^{\times}}
\newcommand{\logtu}{\frac{\log_{p}(t)}{\log_{p}(u)}}
\newcommand{\czeta}{\prescript{}{c}{\mathcal{Z}}}
\newcommand{\ceis}{\prescript{}{c}{\mathrm{Eis}}}
\newcommand{\matt}{\begin{pmatrix} 1 & -ta \\ 0 & 1 \end{pmatrix}}
\newcommand{\matjt}{\begin{pmatrix} t & 0 \\ 0 & 1 \end{pmatrix}}
\newcommand{\mata}{\begin{pmatrix} 1 & -a \\ 0 & 1 \end{pmatrix}}
\newcommand{\Zpr}{(\mathbb{Z}/p^{r})^\times}
\newcommand{\logt}{\log_{u}(t)}
\newcommand{\asaipaddel}{\prescript{}{c}{L^{\mathrm{As},\delta}_{p}}}
\newcommand{\asaipad}{\prescript{}{c}{L^{\mathrm{As}}_{p}}}
\newcommand{\rta}{\tilde{\alpha}}
\newcommand{\rtb}{\tilde{\beta}}
\DeclareMathOperator{\Gal}{Gal}
\begin{document}
\baselineskip 18pt

\title[Non-ordinary $p$-adic Asai $L$-functions of BMF]{On $p$-adic Asai $L$-functions of Bianchi modular forms at non-ordinary primes and their decomposition into bounded $p$-adic $L$-functions}
\author{Mihir V. Deo} 
\email{mdeo048@uottawa.ca}
\address{Department of Mathematics and Statistics, University of Ottawa, Ottawa, ON, Canada K1N 6N5}
\date{}

\keywords{Iwasawa theory, $p$-adic $L$-functions, $p$-adic Hodge theory, Bianchi modular forms, Asai $L$-functions}

\begin{abstract}
Let $p$ be an odd prime integer, $F/\QQ$ be an imaginary quadratic field, and $\Psi$ be a small slope cuspidal Bianchi modular form over $F$ which is non-ordinary at $p$. 
In this article, we first construct a $p$-adic distribution $L^{\mathrm{As}}_{p}(\Psi)$ that interpolates the twisted critical $L$-values of Asai (or twisted tensor) $L$-function of $\Psi$, generalizing the works of Loeffler--Williams from the ordinary case to the non-ordinary case.
To obtain this distribution, we construct some polynomials using Asai--Eisenstein elements: the Betti analogue of the Euler system machinery, developed by Loeffler--Williams. We use some techniques analogous to those of Loeffler--Zerbes for interpolating the twists of Beilinson--Flach elements arising in the Euler system associated with Rankin--Selberg convolutions of elliptic modular forms.
We also use the interpolation method developed by Amice--V\'elu, Perrin-Riou, and B\"uy\"ukboduk--Lei
in the construction. 
Furthermore, under some assumptions, we decompose these unbounded $p$-adic distributions into the linear combination of bounded measures as done by Pollack, Sprung, and Lei--Loeffler--Zerbes in the elliptic modular forms case.
\end{abstract}
\maketitle

\medskip

\section{Introduction}
Investigating complex $L$-functions is an important and active area of number theory.
More precisely, we are interested in the special values of $L$-functions, since several conjectures in number theory, including the Bloch--Kato conjecture, describe important arithmetic data in terms of the special (critical) values of $L$-functions.
One of the most important tools for understanding the critical values of complex $L$-functions is the construction and study of the $p$-adic avatars of these $L$-functions, i.e., $p$-adic $L$-functions.
A one-variable $p$-adic $L$-function is a measure or a distribution on $\Zp^{\times}$ that interpolates critical values of a given complex $L$-function.
Using the Amice transform, we can associate a $p$-adic measure with a power series in the Iwasawa algebra $E\otimes\OO_{E}[[T]]$ (or in $E \otimes \OO_{E}[[\Zp^{\times}]]$), where $E$ is a finite extension of $\Qp$ and $\OO_E$ is its ring of integers. 
Similarly, for $w\in\RR_{\geq 0}$, a \emph{$w$-tempered} distribution can be associated with a power series in the distribution space $\HH_{E,w}\subset E[[T]]$, with unbounded denominators and growth $O(\log^{w}_{p})$. 
See Section \ref{patchsec2} for details about $\HH_{E,w}$.

In \cite{Coates} and \cite{CPR}, Coates and Perrin-Riou formulated a conjecture that predicts the existence of $p$-adic $L$-functions associated to any motive over $\QQ$ having at least one critical $L$-value. 
Loeffler and Williams proved this conjecture for the (conjectural) Asai motive attached to a $p$-ordinary Bianchi eigenform in \cite{CW}. 
More precisely, for a $p$-ordinary cuspidal Bianchi modular form $\Psi$, they constructed a $p$-adic measure $\Las_{p}(\Psi)$ that interpolates the critical $L$-values of the Asai $L$-function attached to $\Psi$. 
Moreover, their construction is independent of the existence of the motive over $\QQ$.

This article addresses the next natural step: the construction of a $p$-adic Asai $L$-function when $\Psi$ is not ordinary at $p$. 
Our construction is also independent of the conjectural Asai motive over $\QQ$.
We attach a $p$-adic Asai $L$-function with unbounded denominators  $L^{\mathrm{As}}_{p}(\Psi)$ to a $p$-non-ordinary small slope cuspidal Bianchi modular form $\Psi$ such that it interpolates critical values of the Asai $L$-function associated with $\Psi$. 
This construction generalizes the result of Loeffler--Williams \cite[Theorem 7.5]{CW} from the $p$-ordinary case to the $p$-non-ordinary case.

Roughly, Loeffler--Williams constructed a $p$-adic measure $L^{\mathrm{As}}_{p}(\Psi)$ using Asai--Eisenstein elements only for $j=0$. 
Since they work in the $p$-ordinary setting, by a deep theory of Kings implies that this measure is independent of the twist $j$.
In particular, for a Dirichlet character $\theta$ of $p$-power conductor, by integrating $x^{j}\theta(x)$ against $L^{\mathrm{As}}_{p}(\Psi)$ recoveres the special $L$-value $L^{\mathrm{As}}(\Psi,\overline{\theta},j+1)$. 
See \cite[Propositions 5.5, 6.3]{CW} for more details.
We will briefly outline their work in Section \ref{prevwork}.
The method of using the only Asai--Eisenstein element for $j=0$ breaks down in $p$-non-ordinary setting.
 
To circumvent this issue, we use the approach taken by Amice--Velu in \cite{AV} and Vi\v sik in \cite{Vishik}: interpolation of twists using polynomials. 
In this article, we obtain the $p$-adic Asai $L$-function $L^{\mathrm{As}}_{p}(\Psi)$ by constructing certain novel family of polynomials $P_{r,j}(T)\in E[T]$ of degree $p^{r-1}$, where $\Psi$ is a $p$-non-ordinary weight $(k,k)$ cuspidal Bianchi modular form, $r\in\ZZ_{\geq 1}$ and $0\leq j \leq k$.
These polynomials are constructed by pairing an appropriate modular symbol associated to $\Psi$ with the \emph{Asai-Eisenstein elements} constructed by Loeffler--Williams in \cite[Sections 3, 4]{CW}.
Furthermore, if we evaluate these polynomials at some special elements, we obtain the critical $L$-values $L^{\mathrm{As}}(\Psi, \overline{\theta}, j+1)$ with some explicit factor.
The polynomials $P_{r,j}$ satisfy certain norm and congruence properties.
To prove the congruence properties of polynomials, we state and prove some new congruence results associated to Asai--Eisenstein elements.
Then, by applying the techniques developed by Amice--V\'elu, Vi\v sik, Perrin-Riou, and B\"uy\"ukboduk--Lei to these polynomials, we construct the $p$-adic distribution with unbounded coefficients and some growth, that is, an element in the distribution space $\HH_{E,v_{p}(a_{p})}(\GG)$, where $v_{p}(a_{p})$ is the slope of the '$p$-th' Fourier coefficient.
We hope that one may use the $p$-adic distribution $L^{\mathrm{As}}_{p}(\Psi)$ to formulate and prove the Harron--Pottharst \cite{HP} style Iwasawa main conjecture over the distribution space, relating the ideal generated by this distribution over the distribution space to a certain Selmer complex. 
We briefly outline the construction of these polynomials and congruences in Subsection \ref{prj}.

We further study this $p$-adic Asai $L$-function. 
In particular, in Section \ref{dec}, we construct Pollack, Sprung, and Lei--Loeffler--Zerbes style signed $p$-adic Asai $L$ functions using the techniques developed by the author in \cite{MD}.
These signed $p$-adic Asai $L$-functions may be used to formulate signed Iwasawa main conjectures.

\subsection{The main result}

Throughout the article, $p$ is an odd rational prime and $k\in\ZZ_{\geq 0}$. 
Let $F=\QQ(\sqrt{-D})/\QQ$ be an imaginary quadratic field, $\OO_{F}$ be its ring of integers, and let $\sigma$ be the complex conjugation map.
Let $\Psi$ be a cuspidal Bianchi eigenform over $F$ of level $\frkn$ (i.e. $U_{F,1}(\frkn)$) and weight $(k,k)$. 
See Definition \ref{congdef} for $U_{F,1}(\frkn)$.
The twisted Asai $L$-function of $\Psi$ with a Dirichlet character $\theta$ of conductor $m$ is defined by
\[L^{\mathrm{As}}(\Psi,\theta,s) \coloneqq L^{(mN)}(\theta^{2}\epsilon_{\Psi}|_{(\ZZ/N\ZZ)^\times}, 2s-2k-2)\cdot \sum_{\substack{
            n\geq 1,\\ (m,n)=1}}
        c(n\Of,\Psi)\theta(n)n^{-s},\]
 where $(N)=\frkn\cap\ZZ$, $\epsilon_{\Psi}|_{(\ZZ/N\ZZ)^\times}$ is the restriction to $(\ZZ/N\ZZ)^\times$ of the nebentypus $\epsilon_{\Psi}$ of $\Psi$, $L^{(mN)}(-)$ is the Dirichlet $L$-function with its Euler factors at the primes dividing $mN$ removed, and $c(\frkm, \Psi)$ denotes the Hecke eigenvalue of $\Psi$ at the integral ideal $\frkm$.

We assume that the level $\frkn$ of $\Psi$ is divisible by all the primes above $p$. 
Assume that $\Psi$ is $p$-non-ordinary. More precisely, the $U_{p}$ Hecke eigenvalue $a_{p}$ of $\Psi$ is not a $p$-adic unit, i.e.,$v_{p}(a_{p})>0$, where $v_{p}$ is a $p$-adic valuation such that $v_{p}(p)=1$.
We furthermore assume that $\Psi$ has \emph{a small slope} at $p$, that is, $v_{p}(a_{p})< k+1$.
Let $E/\Qp$ be a finite extension large enough to contain $F$ and all the Hecke eigenvalues of $\Psi$.
We fix some notations for the rest of the article.
Fix a topological generator $u$ of $1+p\Zp$.
Let $\GG=\Gal(\Qp(\mu_{p^\infty})/\Qp)\cong\Delta\times\Zp$ be the Galois group of cyclotomic extension of $\Qp$ by adjoining $p$-power roots to $\Qp$. 
Let $\GG_1$ be a subgroup of $\GG$ such that $\GG_{1}\cong \GG/\Delta\cong\Zp$.
Fix a topological generator $\gamma_{0}$ of $\GG_{1}$.
Let $\chi$ be the $p$-adic cyclotomic character on $\GG$ such that $\chi(\gamma_{0})=u$.
For any real number $w\geq 0$, define the space of \emph{$w$-admissible distributions} over $\GG$ as 
\[\HH_{E,w}(\GG) \coloneqq \left\{ \sum_{\sigma\in\Delta}\sum_{n\geq0} c_{n,\sigma}\cdot\sigma\cdot (\gamma_{0}-1)^{n}: \sup_{w} \dfrac{|c_{n,\sigma}|_{p}}{n^w}<\infty, \forall \sigma\in\Delta\right\}.\]

The main result of this paper is the construction of the $p$-adic distribution $\asaipad(\Psi)\in\HH_{E,v_{p}(a_{p})}(\GG)$ associated with $\Psi$, which interpolates the critical values of Asai $L$-function twisted by a Dirichlet character of conductor $p^r$. 

\begin{thmx}[Theorem \ref{thmasai1}, Theorem \ref{interpolation}]
\label{thma}
For any integer $c>1$ coprime to $6\frkn$, there exists a $v_{p}(a_{p})$-admissible distribution $\asaipad(\Psi)$ with the following interpolation property: for any integer $0\leq j\leq k$ and for any Dirichlet character of conductor $p^{r}>1$ we have 
\[\asaipad(\Psi)(\chi^{j}\theta) = \begin{cases}
\dfrac{*'}{(a_{p})^{r}}\Las(\Psi, \overline{\theta},j+1) &\text{ if } (-1)^{j}\theta(-1)=1,\\[1em]
0 &\text{ if } (-1)^{j}\theta(-1)= 1,
\end{cases}\]
    where $*'$ is an explicit non-zero factor.
\end{thmx}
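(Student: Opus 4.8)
The plan is to follow the architecture of the Loeffler--Williams construction, but to replace Hida's ordinary projector $\mathrm{e}_{\mathrm{ord},*}$ — which is no longer available once $v_{p}(a_{p})>0$ — with the abstract interpolation machinery of Amice--V\'elu, Perrin-Riou and B\"uy\"ukboduk--Lei, transplanted from the \'etale to the Betti setting exactly as Loeffler--Zerbes do for Beilinson--Flach classes in \cite{LZ_ran}. First I would take the norm-compatible system of Asai--Eisenstein classes $\cPhi^{k,j,r}_{\frkn,a}\in\Ht(\YFss(\frkn),T_{kk}(\OO_{E}))\otimes\OO_{E}[\Zpr]$, for $0\leq j\leq k$, together with its norm relation $\pi^{r+1}_{r}(\cPhi^{k,j,r+1}_{\frkn,a})=(U_{p})_{*}\cdot\cPhi^{k,j,r}_{\frkn,a}$ from \cite[Theorem 3.13]{CW}. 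Pairing against the modular symbol $\phi^{*}_{\Psi}\in\Ho_{\mathrm{c}}(\YFss(\frkn),V_{kk}(\OO_{E}))$ via Poincar\'e duality, and using that $U_{p}$ acts on the $\Psi$-isotypic quotient by the scalar $a_{p}$, I obtain for each $j$ a system $\langle\phi^{*}_{\Psi},\cPhi^{k,j,r}_{\frkn,a}\rangle\in\OO_{E}[\Zpr]$ satisfying $\pi^{r+1}_{r}\langle\phi^{*}_{\Psi},\cPhi^{k,j,r+1}_{\frkn,a}\rangle=a_{p}\cdot\langle\phi^{*}_{\Psi},\cPhi^{k,j,r}_{\frkn,a}\rangle$.

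Next I would assemble these into polynomials and pass to the limit. Because $a_{p}$ is a non-unit, the naive rescaled projective limit $\varprojlim_{r}a_{p}^{-r}\langle\phi^{*}_{\Psi},\cPhi^{k,j,r}_{\frkn,a}\rangle$ does not land in the Iwasawa algebra; instead, following the Vishik--Amice--V\'elu criterion in the form used by B\"uy\"ukboduk--Lei, I would interpolate the twists $j=0,\dots,k$ by a family of polynomials over $\OO_{E}[\Zpr]$ and show that the resulting compatible family, rescaled by $a_{p}^{-r}$, defines a single element of $\HH_{E,w}(\Zp^{\times})$ with $w=v_{p}(a_{p})$. This is where the small-slope hypothesis $v_{p}(a_{p})<k+1$ enters decisively: there are $k+1$ available moments $j=0,\dots,k$, strictly more than the prescribed growth $v_{p}(a_{p})$, so the distribution is uniquely pinned down by these moments and is independent of the interpolating choices. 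I would then define $\asaipad(\Psi)\in\HH_{E,v_{p}(a_{p})}(\Zp^{\times})$ to be this distribution; coprimality of $c$ with $6\frkn$ is used, as in the ordinary case, to guarantee that the only residual dependence is the $(c)$-smoothing factor.

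The interpolation formula is then extracted at finite level. Fix $0\leq j\leq k$ and $\theta$ of conductor $p^{r}>1$; by construction $\asaipad(\Psi)(u^{j}\theta)$ equals the $\theta$-component of $a_{p}^{-r}\langle\phi^{*}_{\Psi},\cPhi^{k,j,r}_{\frkn,a}\rangle$. Unfolding the Poincar\'e pairing and invoking the description of $\asaieisen^{k,j}_{p^{r},\frkn,at}$ in terms of an Eisenstein series of weight $2k-2j+2$, together with the Asai (Rankin--Selberg) integral representation and the twist-compatibility of the Asai--Eisenstein classes (King's polylogarithm theory), this evaluation reproduces $\Las(\Psi,\overline{\theta},j+1)$ up to an explicit non-zero archimedean period $*'$ — the computation being formally the one in \cite[\S 7]{CW}, except that the absent ordinary projector is exactly what leaves behind the denominator $a_{p}^{r}$. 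The vanishing when $(-1)^{j}\theta(-1)=-1$ follows from the parity of the pairing against the relevant $\pm$-component of the modular symbol.

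The main obstacle is the second step: establishing the sharp growth bound and making the B\"uy\"ukboduk--Lei interpolation-of-twists argument function in Betti cohomology. One must control the $p$-adic denominators introduced simultaneously by the normalization $a_{p}^{-r}$ and by the trace maps $\pi^{r+1}_{r}$, verify that the level-$p^{r}$ polynomials genuinely form a compatible system, and confirm that the growth of the limiting distribution is exactly $v_{p}(a_{p})$ and not larger — which is precisely where $v_{p}(a_{p})<k+1$ is used. The archimedean period computation in the third step is a further, though more routine, point: it requires adapting the unfolding of the Asai zeta integral of \cite{CW} from the ordinary eigenform to its non-ordinary $p$-stabilization.
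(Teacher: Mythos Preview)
Your proposal is correct and follows essentially the same architecture as the paper: pair the norm-compatible Asai--Eisenstein classes against $\phi^{*}_{\Psi}$, feed the resulting polynomials into the Amice--V\'elu/Perrin-Riou/B\"uy\"ukboduk--Lei interpolation-of-twists lemma, and read off the interpolation formula at finite level via the Rankin--Selberg computation of \cite[\S7]{CW}. The paper's substantive work is exactly the ``main obstacle'' you flag: it proves the required congruence $\sum_{i=0}^{j}(-1)^{i}\binom{j}{i}P^{\delta}_{r,i}(u^{-i}(1+T)-1)=O(p^{jr})$ by an explicit Betti-cohomological computation with Clebsch--Gordan and moment maps (Theorems~\ref{patch1}--\ref{patch3}), which is precisely the transplant of \cite[Theorem~3.3.5]{LZ_ran} you invoke.
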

Note that one can get rid of $c$ under some conditions. For more details, see the Subsection \ref{withoutc}.

\begin{rem}
    By an eigenform, we mean either $\Psi$ is a newform or can be obtained by the $p$\emph{-stabilization} of some Bianchi newform $\Psi'$ of level $\frkn'$ such that $\frkn'$ is coprime to $p$.
\end{rem}

\begin{rem}
    We do not impose any restriction on the fixed odd prime $p$. 
    Our construction works as long as the $p$-adic valuation of the $U_{p}$-eigenvalue is less than $k+1$.
\end{rem}
\subsection{Outline of the construction of $P_{r,j}$}
\label{prj}
We briefly outline the construction of the polynomials $P_{r,j}$ of degree $p^{r-1}$ that are used to construct the $p$-adic $L$-function $\asaipad(\Psi)$.
For $r\in\ZZ_{> 1}$, any integer $0\leq j\leq k$, and $\delta:\Delta\to \OO_{E}^{\times}$, define,
\begin{equation}
    \label{polyeq}
    P^{\delta}_{r,j}(T)=\left \langle \phi^{*}_{\FF}, \hspace{0.5em} (U_p)_{*}^{-r} \frac{1}{A_{j}}\sum_{t\in\Zpr} \prescript{}{c}{\Xi^{k,j}_{p^{r},\frkn,at}}\otimes \delta(t)(1+T)^{\logt} \right\rangle,
\end{equation}
where
\begin{itemize}
\setlength{\itemsep}{2pt}
    \item $\phi^{*}_{\FF}\in\mathrm{H}^{1}_{\mathrm{c}}(\YFss(\frkn), V_{kk}(\OO_{E}))$ is a modular symbol associated to $\FF$,
    \item $\prescript{}{c}{\Xi^{k,j}_{p^r,\frkn,at}}\in\mathrm{H}^{2}(\YFss(\frkn), T_{kk}(\OO_{E}))$ is the \emph{Asai-Eisenstein element} constructed by Loeffler--Williams in \cite{CW},
    \item $\logt$ is a unique integer in $[0,p^{r-1})$ related to $t\in\Zpr$,
    \item $A_j$ is some combinatorial fudge factor and $a\in\OO_F$ such that $a-a^{\sigma}=\sqrt{-D}$,
    \item $\langle,\rangle$ denotes the perfect Poincar\'e duality pairing between $\mathrm{H}^{1}_{\mathrm{c}}(\YFss(\frkn), V_{kk}(\OO_E))$ and $\mathrm{H}^{2}(\YFss(\frkn), T_{kk}(\OO_E))$.
\end{itemize}
We also define a polynomial when $r=1$. See Equation \eqref{trivi}.
Here $\YFss(\frkn)$ is a locally symmetric space of a certain level (see \ref{locsymdef} for the definition).
Here $V_{k,k}$ are $T_{k,k}$ are some specific weight $(k,k)$-coefficient modules. 
The Asai-Eisenstein element $\asaieisen^{k,j}_{p^{r},\frkn,at}$ is related to an Eisenstein series of weight $2k-2j+2$.These elements are 'the Betti counterparts' of the known Euler system elements, such as Beilinson--Flach Euler systems.
They satisfy certain norm-compatibilities like Beilinson--Flach elements. See \cite[Theorem 3.13]{CW} for more details. 

These polynomials satisfy some norm and congruence properties. 
For example, for $0\leq j \leq k$ and $p^{r}>0$, $P_{r,j}$ satisfy,
\begin{equation}
    \label{congeqn}
\sup_{r}\Big{|}\Big{|} p^{(v_{p}(a_{p})-j)r} \sum_{i=0}^{j}(-1)^{i} {j\choose i} P^{\delta}_{r,i}(u^{-i}(1+T)-1)\Big{|}\Big{|}<\infty.
\end{equation}
This property is crucial to construct $\asaipad(\Psi)$.
The more properties of $P_{r,j}$ are described in Lemma \ref{polylem}.
To prove \eqref{congeqn}, we first prove the following key lemma, which is a cohomological version of the interpolation of twists (congruence relation):
\begin{lemx}[Lemma \ref{patch2}]
\label{lemA}
For any $t\in (\ZZ/p^{r})^\times$ and any positive integer $j$, we have
    \begin{align}
        &\sum_{i=0}^{j} t^{(j-i)}(a-a^{\sigma})^{(j-i)}(j-i)!\mathrm{Res}_{p^r}^{p^{jr}} \mom^{(j-i)(j-i)} (\asaieisen^{[i, i]}_{p^{r},\frkn, ta}) \\
        &\in p^{jr}\Ht(\YFss(\frkn), T_{jj}(\OO_{E})) \nonumber.
    \end{align}    
\end{lemx}
For the definition of the moment map $\mom$, see Subsection \ref{momomaps}. The Lemma \ref{patch2} is the Betti cohomology analogue of \cite[Theorem 3.3.5]{LZ_ran}.
Furthermore, we prove another novel lemma :
\begin{lemx}[Lemma \ref{patch3}]
    For any character $\delta:\Delta \to \OO_{E}^\times$, any integer $0\leq j \leq k$, and any integer $r$ such that $p^{r}>1$, we have
    \begin{equation*}
        \sup_{r} \Big{|}\Big{|}p^{-jr} \sum_{i=0}^{j} (-1)^{i}{j\choose i} \sum_{t\in \Zpr} \asaieisen^{k,i}_{p^r, \frkn, at}\otimes t^{-i}\delta(t)(1+T)^{\logt}\Big{|}\Big{|} < \infty.
    \end{equation*}
\end{lemx}
To prove Lemmas \ref{patch2}, \ref{patch3}, we study locally symmetric spaces $\YFs(m,m\frkn)$ of mixed level, where $m\in\ZZ$.
Similar to the methods of Loeffler--Zerbes in \cite{LZ_ran} and Loeffler--Williams in \cite{CW}, for each $j\in\ZZ_{\geq0}$, we construct and study cohomological elements $\prescript{}{c}{\mathcal{Z}^{[j]}_{p^{r},\frkn,a}}\in \mathrm{H}^{2}(\YFs(p^{r},p^{r}\frkn), T_{jj}(\OO_{K}))$.
To construct and study these elements, \emph{moment maps}, \emph{Clebsch--Gordon maps} are used, together with the interplay between them, in a manner similar to those in Kings--Loeffler--Zerbes \cite{KLZ1} and Lei--Loeffler--Zerbes \cite{LLZ_asai}. 
Readers are advised to refer to Sections \ref{patchsec} and \ref{patchsec2} for a detailed description of the polynomials, cohomological elements, and congruences.

\subsection{Signed $p$-adic Asai $L$-functions}
We give an application of the $p$-adic Asai $L$-function $\asaipad(\Psi)$: the decomposition of $p$-adic Asai $L$-functions with unbounded coefficients into signed $p$-adic Asai $L$-functions with bounded coefficients. 
Assume $p$ splits in $F$ as $p\OO_{F}=\PP\PPP$.
See Section \ref{decomp} for the setting for the theorem mentioned below.
We state a few of them here for the convenience of readers:
\begin{enumerate}
    \item $\Psi$ is a cuspidal Bianchi eigenform of level $\mathcal{N}$, where $\mathcal{N}\subset\OO_F$ is an ideal coprime to $p$, trivial nebentypus, and weight $(k,k)$. Furthermore, $\Psi$ is $\PP$-non-ordinary and $\PPP$-ordinary.
    \item For $\qqq\in\pset$, let $a_{\qqq}$ be the $T_{\qqq}$-eigenvalue, and $\alpha_{\qqq}\neq \beta_{\qqq}$ be the roots of the polynomial $X^{2}- a_{\qqq}X+ p^{k+1}$.
    Assume $v_{p}(a_{\PP})> \left\lfloor \dfrac{k}{p-1} \right\rfloor$.
    \item Choose $\alpha_{\PPP}$ such that $v_{p}(\alpha_{\PPP})=0$ and for $\bullet\in\{\alpha_{\PP}, \beta_{\PP}\}$, let $\tilde{\bullet}=\alpha_{\PPP}\bullet$.
    Let $\Psi^{\tilde{\bullet}}$ be $p$-stabilization of $\Psi$ such that $U_p$-eigenvalue of $\Psi^{\tilde{\bullet}}$ is $\tilde{\bullet}$.
\end{enumerate}
Since $v_{p}(\tilde{\alpha}), v_{p}(\tilde{\beta})<k+1$, we can attach $p$-adic distributions $\asaipad(\FF^{\tilde{\alpha}})$ and $\asaipad(\FF^{\tilde{\beta}})$ to $\FF^{\tilde{\alpha}}$ and $\FF^{\tilde{\beta}}$ respectively using Theorem \ref{thma}.
They satisfy certain growth and interpolation properties.
Then using the methods in \cite{MD}, we prove:
\begin{thmx}[Theorem \ref{thmasai2}]
\label{thmb}
  There exist $\prescript{}{c}{L^{\mathrm{As},\sharp}_{p}}, \prescript{}{c}{L^{\mathrm{As},\flat}_{p}} \in E\otimes\OO_{E}[[\GG]]\cong E\otimes\OO_{E}[\Delta][[T]]$ and a logarithmic matrix $\tilde{\Mbar}\in M_{2,2}(\HH_{E})$ such that 
  \begin{equation}
      \begin{pmatrix}
          \asaipad(\Psi^{\rta})\\[1em]
          \asaipad(\Psi^{\rtb})
      \end{pmatrix} = \tilde{Q}^{-1}\tilde{\Mbar}\begin{pmatrix}
          \prescript{}{c}{L^{\mathrm{As},\sharp}_{p}}\\[1em]
          \prescript{}{c}{L^{\mathrm{As},\flat}_{p}}
      \end{pmatrix},
  \end{equation}\\\\
  where $\tilde{Q}=\begin{pmatrix}
      \rta & -\rtb\\[0.5em]
      -\alpha^{2}_{\PPP}p^{k+1} & -\alpha^{2}_{\PPP}p^{k+1}
  \end{pmatrix}$, and $\tilde{\Mbar}$ satisfies properties from Proposition \ref{mbarprop}. 
\end{thmx}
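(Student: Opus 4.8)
The plan is to mimic, in the Betti/Asai setting, the decomposition strategy of \cite{MD} and \cite{LLZ1}, using the unbounded Asai distributions $\asaipad(\Psi^{\rta})$ and $\asaipad(\Psi^{\rtb})$ produced by Theorem \ref{thma}. Since $\Psi$ is $\PP$-non-ordinary but $\PPP$-ordinary, only the $\PP$-component needs to be ``unwound'': the $\PPP$-side is already a unit situation, which is precisely why the twisting factors $\rta = \alpha_{\PPP}\alpha_{\PP}$ and $\rtb = \alpha_{\PPP}\beta_{\PP}$ appear, and why the matrix $\tilde{Q}$ carries the harmless unit factor $\alpha^{2}_{\PPP}p^{k-1}$ in its bottom row. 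First I would rewrite each $\asaipad(\Psi^{\tilde{\bullet}})$, for $\bullet\in\{\alpha_{\PP},\beta_{\PP}\}$, as a limit over $r$ of the pairing of the modular symbol $\phi^{*}_{\Psi}$ against the (no longer ordinary-projected) patched Asai--Eisenstein classes $\cPhi^{k,0,r}_{\frkn,a}$, divided by $\tilde{\bullet}^{r}$, using the norm relation $\pi^{r+1}_{r}(\cPhi^{k,j,r+1}_{\frkn,a}) = (U_p)_{*}\cdot\cPhi^{k,j,r}_{\frkn,a}$ from \cite[Theorem 3.13]{CW}. This expresses the vector on the left-hand side as a limit of vectors obtained by applying $\mathrm{diag}(\tilde{\alpha}^{-r},\tilde{\beta}^{-r})$ to a single sequence of cohomology classes.

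The key step is then the construction of the logarithmic matrix $\tilde{\Mbar}$. Following \cite{LLZ1} and \cite{MD}, I would build $\tilde{\Mbar}$ from the $(k+1)$-fold products of the companion-type matrices $\MatA$ associated with the Hecke polynomial $X^{2}-a_{\PP}X + p^{k+1}$ at $\PP$: concretely, set $\tilde{\Mbar} = \varprojlim_{r} C_{r}\cdots C_{1}$ with $C_{i}$ built from $\MatA$, checking convergence in $M_{2,2}(\HH_{E})$ exactly as in \cite[Section 3]{LLZ1} — this is where the small-slope/technical hypothesis $v_{p}(a_{\PP}) > \lfloor k/(p-1)\rfloor$ is used to guarantee that the logarithmic matrix has entries in $\HH_{E}$ with controlled growth. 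The factorization $\tilde{Q}^{-1}\tilde{\Mbar}$ then reproduces, in the limit, the transition matrices $\mathrm{diag}(\tilde{\alpha}^{-r},\tilde{\beta}^{-r})$ acting on the Asai--Eisenstein tower, so that defining
\[
\begin{pmatrix}\prescript{}{c}{L^{\mathrm{As},\sharp}_{p}}\\[0.5em]\prescript{}{c}{L^{\mathrm{As},\flat}_{p}}\end{pmatrix} \coloneqq \tilde{\Mbar}^{-1}\tilde{Q}\begin{pmatrix}\asaipad(\Psi^{\rta})\\[0.5em]\asaipad(\Psi^{\rtb})\end{pmatrix}
\]
makes the displayed identity hold tautologically — the real content is that the right-hand side lies in the \emph{bounded} module $E\otimes\OO_{E}[\Delta][[T]]$ rather than merely in $\HH_{E}$.

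To prove this boundedness, I would use Theorem \ref{patch3}: the estimate there says precisely that the alternating binomial combinations $p^{-jr}\sum_{i=0}^{j}(-1)^{i}\binom{j}{i}\sum_{t}\asaieisen^{k,i}_{p^{r},\frkn,at}\otimes t^{-i}\delta(t)(1+T)^{\logt}$ have bounded norm, uniformly in $r$. Pairing these with $\phi^{*}_{\Psi}$ and assembling them via the entries of $\tilde{\Mbar}^{-1}$ — whose ``denominators'' are exactly the factors that the $p^{-jr}$ normalizations in Theorem \ref{patch3} cancel — shows that $\prescript{}{c}{L^{\mathrm{As},\sharp}_{p}}$ and $\prescript{}{c}{L^{\mathrm{As},\flat}_{p}}$ are genuine bounded measures; the properties of $\tilde{\Mbar}$ (its determinant, specializations at characters, and the vanishing/interpolation at finite level) are inherited from Proposition \ref{mbarprop} applied with $\alpha,\beta$ replaced by $\rta,\rtb$, since the bottom row of $\tilde{Q}$ differs from $\MatQ$ only by the unit $\alpha_{\PPP}^{2}$ and the power of $p$ already built into the Hodge--Tate weights. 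I expect the main obstacle to be the bookkeeping in matching the Betti-side Asai--Eisenstein tower with the abstract logarithmic-matrix formalism of \cite{LLZ1}: one must verify that the $(U_p)_{*}$-action on $\Ht(\YFss(\frkn), T_{kk})$ interacts with the moment maps $\mom^{(i)(i)}$ and the restriction maps $\mathrm{Res}^{p^{jr}}_{p^r}$ in exactly the way encoded by the entries of $\MatA$, which is the content needed to promote the congruence relation of Theorem \ref{patch2} into the matrix factorization; once that dictionary is set up, convergence and boundedness follow from \cite{LLZ1,MD} essentially verbatim.
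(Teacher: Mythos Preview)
Your proposal is considerably more elaborate than what the paper actually does. The paper's proof is a three-line black-box application of \cite[Theorem~5.5]{MD}: one checks that $v_p(\rta+\rtb)=v_p(\alpha_{\PPP})+v_p(a_{\PP})=v_p(a_{\PP})>\lfloor k/(p-1)\rfloor$, that each $\asaipad(\Psi^{\dagger})$ has growth $O(\log_p^{v_p(\dagger)})$, and---this is the point you do not single out---that the interpolation formula from Theorem~\ref{interpolation} has the shape $\asaipad(\Psi^{\dagger},\theta,j)=c_{\theta,j}/\dagger^{r}$ with $c_{\theta,j}$ \emph{independent of} $\dagger\in\{\rta,\rtb\}$. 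Those three facts are precisely the hypotheses of the abstract decomposition theorem in \cite{MD}, which then produces $\tilde{\Mbar}$ and the bounded measures $\prescript{}{c}{L^{\mathrm{As},\sharp}_{p}},\prescript{}{c}{L^{\mathrm{As},\flat}_{p}}$ simultaneously.

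By contrast, you propose to go back to the cohomological level, re-express the distributions through the Asai--Eisenstein tower, and use Theorem~\ref{patch3} directly to bound $\tilde{\Mbar}^{-1}\tilde{Q}$ applied to the pair of distributions. This is not incorrect in spirit---Theorem~\ref{patch3} is indeed what underlies the growth estimates feeding into \cite[Theorem~5.5]{MD}---but it is unnecessary here, and your sketch leaves a real gap: the assertion that the ``denominators'' in $\tilde{\Mbar}^{-1}$ are ``exactly the factors that the $p^{-jr}$ normalizations in Theorem~\ref{patch3} cancel'' requires an identification of the Wach-module logarithmic matrix with the binomial-congruence structure that you do not supply. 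More importantly, the ``main obstacle'' you flag (matching the $(U_p)_{*}$-action on Betti cohomology with the companion matrix $A_{\varphi}$) is entirely bypassed in the paper: $\tilde{\Mbar}$ is built from the Wach module of an auxiliary crystalline representation as in Section~\ref{logmat}, and its relevance to the Asai distributions is mediated purely through growth rates and the common interpolation constant $c_{\theta,j}$, never through any cohomological dictionary. In effect your route would re-prove a special case of \cite[Theorem~5.5]{MD} inside this paper, whereas the paper simply invokes it.
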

Note that by using the methods of \cite{MD}, we avoid the conjectural $p$-adic Hodge theoretic properties associated with the Galois representation associated with the Asai motive. 

\subsection{Plan of the article}
In Section \ref{locsymdef}, we recall the definitions of locally symmetric spaces in which we are interested. 
We also define Hecke correspondences on these locally symmetric spaces.
We define the Asai $L$-function of Bianchi modular forms in Section \ref{bmfstuff}. Moreover, we recall the definitions of Bianchi modular forms and Bianchi modular symbols in this section. 
In Section \ref{asaieisendef}, we describe the weight $2$ and higher weight Asai-Eisenstein elements constructed by Loeffler--Williams in \cite{CW}.
The construction of these elements is closely related to the elements constructed in \cite{LLZ_asai}.
One can think of the Asai-Eisenstein elements as a Betti counterpart of known Euler system elements like Beilinson--Flach elements. 
We do some explicit calculations in Section \ref{patchsec} similar to \cite[Theorem 6.2.4]{KLZ1} and \cite[Theorem 8.1.4]{LLZ_asai}. 
We also prove some interpolation and congruence theorems, which are Betti analogues of the theorems in \cite[Section 3.3]{LZ_ran}.
In Section \ref{patchsec2}, we prove Theorem \ref{thma} using the interpolation of twists method of Amice--Velu, Perrin-Riou, and B\"uy\"ukboduk--Lei. 
In the last section, we first recall the definition and construction of logarithmic matrices briefly. Then we prove Theorem \ref{thmb}

\section{Locally symmetric spaces}
\label{locsymdef}
\subsection{Setup and notations}
Let $F/\QQ$ be an imaginary quadratic field of discriminant $-D$ and denote its ring of integers by $\OO_F$.
Let $\adeles$ denote the adele ring of $F$, and the finite adeles are denoted by $\adeles^{f}$. 
Define $\widehat{\OO_{F}}\coloneqq \OO_{F}\otimes \widehat{\ZZ} = \OO_{F}\otimes\prod_{p}\Zp$.

Let $\frkn\subset\OO_F$ be an ideal such that $\frkn$ is divisible by all the primes above $p$.
Throughout this article, we assume $\frkn$ is small enough such that the locally symmetric space attached to $\frkn$ is smooth.
Let $N\in\ZZ$ such that $(N)=\ZZ \cap\frkn$ in $\ZZ$.

Let $\hh=\{z\in\CC \mid \mathrm{Im}(z)>0\}$ be the usual upper-half plane with $\gl(\RR)$-action given by M\"obius transformations. 
Define \textit{the upper-half space} or \textit{hyperbolic 3-space} to be $\hh_{3}=\{(z,t)\in \CC\times \RR_{>0}\}$ with $\gl(\CC)$ action given by
\[\begin{pmatrix}
    a & b \\ c & d
\end{pmatrix}\cdot(z,t) = \left(\dfrac{(az+b)\overline{(cz+d)}+a\overline{c}t^{2}}{\mid cz+d\mid^2+\mid c\mid^{2}t^2}, \dfrac{\mid ad-bc \mid t}{\mid cz+d\mid^2+\mid c\mid^{2}t^2}\right).\]
We embed $\hh\hookrightarrow\hh_{3}$ via $x+iy \mapsto (x,y)$, which is compatible with the actions of $\gl(\RR)$ on both sides.
\subsection{Algebraic groups and locally symmetric spaces}
The primary references of this section are \cite{LLZ_asai} and \cite{CW}.

Let
    \begin{align*}
        G=\mathrm{Res}_{F/\QQ}\gl, \hspace{5mm}G^{*}=G\times_{D}\mathbb{G}_m,
    \end{align*}
be the algebraic groups, where where $D\coloneqq \mathrm{Res}_{F/\QQ}\mathbb{G}_{m}$ and the map $G \to D$ is determinant.

\begin{defi}[Locally symmetric spaces]
    For open compact subgroups $U_{\QQ}\subset \gl(\mathbb{A}^{f}_{\QQ})$, $U\subset G(\mathbb{A}^{f}_{\QQ})=\gl(\mathbb{A}^{f}_{F})$, and $U^{*}\subset G^{*}(\mathbb{A}^{f}_{\QQ})=\{g\in\gl(\mathbb{A}^{f}_{F})\mid \det(g)\in(\mathbb{A}^{f}_{\QQ})^\times\}$, the corresponding locally symmetric spaces are defined as
    \begin{align*}
        \YQ(U_{\QQ}) = \gl(\QQ)_{+} \backslash [\gl(\mathbb{A}^{f}_{\QQ}) \times \hh] / U_{\QQ}, \\[1em]
        \YF(U) = \gl(F) \backslash [\gl(\adeles^{f}) \times \hh_{3}] / U, \\[1em]
        \YFs(U^*) = G^{*}(F)_{+} \backslash [G^{*}(\mathbb{A}^{f}_{\QQ}) \times \hh_{3}] / U^*,
    \end{align*}
    where $G^{*}(F)_{+} = \{g \in G^{*}(F) \colon \det(g)>0 \}$.
\end{defi}

\subsection{Locally symmetric spaces of mixed levels}
We define the specific level groups and locally symmetric spaces of a particular interest in this article. 
\begin{defi}[Congruence subgroups]
\label{congdef}
    Let $K$ be either $F$ or $\QQ$, and $\frkm,\mathfrak{n}$, and $\mathfrak{a}$ be the ideals in $\OO_K$.
    Define
    \begin{align*}
        U_{K}(\frkm,\mathfrak{n})\coloneqq \left\{ B\in \gl(\OO_K \otimes \widehat{\ZZ}) \colon B \equiv I_{2} \mod \begin{pmatrix}
        \frkm & \frkm \\ \mathfrak{n} & \mathfrak{n}
    \end{pmatrix}\right\}, \\[0.8em]
    U_{K}(\frkm(\aaa), \mathfrak{n}) \coloneqq \left\{ B \in \gl(\OO_K \otimes \widehat{\ZZ}) \colon B \equiv I_{2} \mod \begin{pmatrix}
        \frkm & \frkm\aaa \\ \mathfrak{n} & \mathfrak{n}
    \end{pmatrix}\right\}.
    \end{align*}
\end{defi}
We write $Y_{K}(\frkm,\mathfrak{n})\coloneqq Y_{K}(U(\frkm,\mathfrak{n}))$ and similarly $Y_{K}(\frkm(\aaa),\mathfrak{n})$.

Furthermore, let
\begin{equation*}
    U^{*}_{F}(\frkm, \mathfrak{n})\coloneqq U_{F}(\frkm,\mathfrak{n}) \cap G^{*},  \hspace{1.5em}
    U^{*}_{F}(\frkm(\aaa), \mathfrak{n})\coloneqq U_{F}(\frkm(\aaa),\mathfrak{n}) \cap G^{*}.
\end{equation*}
We write $Y^{*}_{F}(\frkm,\mathfrak{n})\coloneqq Y^{*}_{F}(U(\frkm,\mathfrak{n}))$ and $Y^{*}_{F,1}(\mathfrak{n})\coloneqq Y^{*}_{F}((1),\mathfrak{n})$, i.e., $Y^{*}_{F}(\frkm,\mathfrak{n})$ for $\frkm=1$.
 
Let $\frkn\subset\OO_F$ be an ideal and $N$ is an integer such that $(N)=\frkn \cap\ZZ$.
We are interested in the following locally symmetric spaces:
\begin{enumerate}
    \item The usual open modular curve $Y_{\QQ,1}(N)$ of level $\Gamma_{1}(N)$. It has only one connected component isomorphic to $\Gamma_{1}(N)\backslash \hh$.

    \item Another (mixed level) modular curve which we are interested in is $Y_{\QQ}(m, mN)$, for any $m\in\ZZ_{\geq 0}$.
    
    \item The space $\YFss(\frkn)$ (which will appear so many times later). This space also has a single connected component isomorphic to $\Gamma^{*}_{F,1}\backslash\hh_{3}$, where 
    \[\Gamma^{*}_{F,1}\coloneqq \left\{ \begin{pmatrix}
        a & b \\ c & d
    \end{pmatrix}\in \mathrm{SL}_{2}(\Of) \colon c\equiv 0, a\equiv d \equiv 1 \mod \frkn \right\}.\]
    \item We are also interested in (mixed level) $Y^{*}_{F}(m,m\frkn)$, where $m\in\ZZ_{\geq 1}$. The space $Y^{*}_{F}(m,m\frkn)$ is not connected and has connected components indexed by group $(\ZZ/m\ZZ)^\times$, since the component group of $Y^{*}_{F}(m,m\frkn)$ is $\QQ^{\times}\backslash\mathbb{A}^{\times}_{\QQ}/\RR_{>0}\det(U^{*}(m,m\frkn))\cong (\ZZ/m\ZZ)^\times$. The identity component $Y^{*}_{F}(m, m\frkn)^{(1)}$ is isomorphic to $\GG^{*}_{F}(m, m\frkn)\backslash\hh_{3}$, where
    \[\GG^{*}_{F}(m,m\frkn)=\left\{ \begin{pmatrix}
         a & b\\ c& d
    \end{pmatrix} \in \mathrm{SL}_{2}(\OO_{F}) : \begin{matrix}
        a \equiv 1 \mod m, & b\equiv 0 \mod m\\c \equiv 0 \mod m\frkn, & d\equiv 1 \mod m\frkn 
    \end{matrix}\right\}.\]
    We will describe some explicit computations related to this space in Section \ref{patchsec}.
    
    \item The space $Y_{F,1}(\frkn)$. Since $\det(U_{F,1}(\frkn))=\widehat{\Of}^{\times}$, $Y_{F,1}(\frkn)$ has $h_{F}$ connected components, where $h_{F}$ is the class number of $F$. The identity component is isomorphic to $\GG_{F,1}\backslash\hh_{3}$, where $\GG_{F,1}(\frkn)\coloneqq \gl(F) \cap U_{F,1}(\frkn).$ 
\end{enumerate}
\begin{rem}
   There are natural maps
    $$\YQQ(N) \xrightarrow{\iota}\YFss(\frkn) \xrightarrow{j} Y_{F,1}(\frkn)$$
    induced by the natural maps $\hh \hookrightarrow\hh_{3}$ and $\gl(\mathbb{A}^{f}_{\QQ}) \hookrightarrow G^{*}(\mathbb{A}^{f}_{\QQ}) \hookrightarrow G^{*}(\mathbb{A}_{\QQ}^{f})$.
\end{rem}

\begin{prop}
If $\frkn$ is divisible by some integer $q\geq 4$, then $\YFss(\frkn)$ is a smooth manifold, and
\[\iota \colon \YQQ(N) \hookrightarrow \YFss(\frkn)\]
is an injective map and hence a closed immersion.
Moreover, let $m\in\ZZ$ be a positive integer, and if $m\frkn$ is divisible some integer $\geq4$, then
\[\iota \colon Y_{\QQ}(m,mN) \hookrightarrow \YFs(m,m\frkn)\]
is an injective map and a closed immersion.
\end{prop}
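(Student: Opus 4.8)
The plan is to prove the proposition by reducing the statement about adelic locally symmetric spaces to a concrete statement about quotients of $\hh$ and $\hh_3$ by explicit congruence subgroups, and then checking injectivity on the level of these quotients. First I would recall the standard fact that for a congruence subgroup $\Gamma$ of $\mathrm{SL}_2(\Of)$ (resp.\ $\mathrm{SL}_2(\ZZ)$), the action on $\hh_3$ (resp.\ $\hh$) is free as soon as $\Gamma$ contains no nontrivial elements of finite order, and that the divisibility of the level by an integer $q\geq 4$ forces $\Gamma$ to be neat (any element of finite order would have to be $\equiv I_2$ modulo $q$, hence trivial, since a nontrivial torsion element of $\mathrm{SL}_2$ of a number ring reduced mod $q\geq 4$ cannot be unipotent-free). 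This gives smoothness of $\YFss(\frkn)$, and more generally of $\YFs(m,m\frkn)$ when $m\frkn$ is divisible by some integer $\geq 4$; this is the content of the first sentence together with the manifold claim.

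Next, using the identification of the identity components recorded in Section~\ref{locsymdef} — namely $\YQQ(N)^{(1)}\cong \Gamma_1(N)\backslash\hh$ and $\YFss(\frkn)^{(1)}\cong \Gamma^{*}_{F,1}\backslash\hh_3$ — and recalling that $\YQQ(N)$ is connected while $\YFss(\frkn)$ is connected as well (single connected component, as stated), the map $\iota$ is, on the nose, the map
\[
\Gamma_1(N)\backslash\hh \longrightarrow \Gamma^{*}_{F,1}\backslash\hh_3
\]
induced by the inclusion $\hh\hookrightarrow\hh_3$, $x+iy\mapsto(x,y)$, which is $\gl(\RR)$-equivariant, together with the inclusion $\mathrm{SL}_2(\ZZ)\hookrightarrow\mathrm{SL}_2(\Of)$. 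So I would show: (i) $\Gamma_1(N) = \Gamma^{*}_{F,1}\cap \mathrm{SL}_2(\ZZ)$ (a direct check of the congruence conditions, using $N\ZZ=\frkn\cap\ZZ$), and (ii) if $\gamma\in\Gamma^{*}_{F,1}$ sends a point of $\hh\subset\hh_3$ into $\hh$, then $\gamma$ actually lies in $\mathrm{SL}_2(\RR)$ and hence (being in $\mathrm{SL}_2(\Of)$) in $\mathrm{SL}_2(\ZZ)$. Point (ii) is the crux: one uses the explicit formula for the $\gl(\CC)$-action on $\hh_3$ to see that the second coordinate of $\gamma\cdot(z,t)$ with $t=\mathrm{Im}(z)$ is forced to keep the ``$\CC$-part'' of $z$ real, which pins down $c,d$ (and then $a,b$) to be real. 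Combining (i) and (ii), two points of $\hh$ are $\Gamma^{*}_{F,1}$-equivalent in $\hh_3$ iff they are $\Gamma_1(N)$-equivalent in $\hh$, which is exactly injectivity of $\iota$. Since $\iota$ is a continuous injection of a manifold of dimension $2$ into a manifold of dimension $3$ that is proper (the source being, up to finite level, an algebraic curve mapping properly — or directly, since $\Gamma_1(N)\backslash\hh$ is the analytification of an affine curve and the image is closed by the equivariance argument above), it is a closed immersion.

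For the mixed-level statement, the argument is parallel but one must account for connected components. Here $Y_{\QQ}(m,mN)$ and $\YFs(m,m\frkn)$ both have component group $(\ZZ/m\ZZ)^\times$ (the determinant computation recalled in Section~\ref{locsymdef}), and $\iota$ is compatible with the component maps because the determinant of an element of $\gl(\mathbb{A}^f_\QQ)$ is unchanged under the inclusion into $G^*(\mathbb{A}^f_\QQ)$. So it suffices to check injectivity component by component; on the identity component this is again the map $\GG^{*}_{\QQ}(m,mN)\backslash\hh\to\GG^{*}_{F}(m,m\frkn)\backslash\hh_3$ for the relevant congruence groups, and the same two-step argument (matching of congruence subgroups intersected with $\mathrm{SL}_2(\ZZ)$, plus the real-locus argument from the explicit $\hh_3$-action) applies verbatim. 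The main obstacle I anticipate is purely bookkeeping rather than conceptual: being careful that the smoothness/neatness hypothesis is genuinely needed to make the quotients manifolds (so that ``closed immersion'' is meaningful), and handling the component-group compatibility cleanly in the mixed-level case. The geometric heart — that $\gamma\in\mathrm{SL}_2(\Of)$ preserving the embedded $\hh$ must be defined over $\RR$ — is a short computation with the displayed action formula, so I would present that explicitly and treat the rest as routine.
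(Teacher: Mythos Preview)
Your overall architecture is right: reduce to the identity components, verify $\Gamma_1(N)=\Gamma^{*}_{F,1}(\frkn)\cap\mathrm{SL}_2(\ZZ)$, and show that $\Gamma^{*}_{F,1}(\frkn)$-equivalence of two points of the embedded $\hh\subset\hh_3$ forces $\Gamma_1(N)$-equivalence. The paper itself does not prove this and simply refers to \cite[Proposition~2.5]{CW}, so your write-up would in fact supply more than the paper does. However, your step (ii) as written does not work.

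The claim ``if $\gamma\in\Gamma^{*}_{F,1}(\frkn)$ sends a point of $\hh$ into $\hh$, then the explicit action formula forces $c,d$ and then $a,b$ to be real'' is false. For $\gamma\in\mathrm{SL}_2(\CC)$ and $(x,y)\in\hh$ with $x\in\RR$, the reality of the first coordinate of $\gamma\cdot(x,y)$ is the single real equation
\[
\mathrm{Im}\!\left(a\bar c\,(x^2+y^2)+(a\bar d+b\bar c)\,x+b\bar d\right)=0,
\]
which certainly does not pin the entries of $\gamma$ down to $\RR$. Geometrically, $\gamma\hh$ is a totally geodesic plane in $\hh_3$, and two distinct such planes typically meet along a geodesic, so for many $\gamma\notin\mathrm{SL}_2(\RR)$ there are points $p\in\hh$ with $\gamma p\in\hh$. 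What is true---and what you need---uses the congruence hypothesis in an essential way. Let $\sigma:(z,t)\mapsto(\bar z,t)$ be complex conjugation on $\hh_3$; its fixed set is exactly the embedded $\hh$, and $\sigma(\gamma\cdot w)=\bar\gamma\cdot\sigma(w)$. If $p_1,p_2\in\hh$ and $\gamma p_1=p_2$, then $\bar\gamma p_1=p_2$ as well, so $\gamma^{-1}\bar\gamma$ fixes $p_1$. This element lies in the discrete group $\mathrm{SL}_2(\Of)$ and in the compact stabiliser of $p_1$, hence has finite order; moreover, since an integer $q\ge 4$ divides $\frkn$, both $\gamma$ and $\bar\gamma$ are upper-triangular unipotent modulo $q$, so $\mathrm{tr}(\gamma^{-1}\bar\gamma)\equiv 2\pmod{q}$. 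But a finite-order element of $\mathrm{SL}_2(\Of)$ has real trace in $\{-2,-1,0,1,2\}$, and only $2$ is $\equiv 2\pmod q$ for $q\ge 4$; trace $2$ and finite order force $\gamma^{-1}\bar\gamma=I$, i.e.\ $\gamma=\bar\gamma\in\mathrm{SL}_2(\Of)\cap\mathrm{SL}_2(\RR)=\mathrm{SL}_2(\ZZ)$. Combined with your (i) this gives injectivity, and the same involution argument works verbatim on each component in the mixed-level case. So the fix is not bookkeeping but this one genuine idea: replace the attempted direct computation with the conjugation/neatness argument.
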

\begin{proof}
    See \cite[Proposition 2.5]{CW}.
\end{proof}

\begin{assum}
    We will assume the ideal $\frkn$ of $\Of$ is divisible by some integer $q\geq 4$ throughout the article. 
    Due to this assumption, the space $\YFss(\frkn)$ will be a smooth manifold and not a non-smooth orbifold.
\end{assum}

\subsection{Hecke correspondences on the locally symmetric spaces} 

Let $\aaa$ be any square-free ideal of $\OO_K$.
Consider the correspondence diagram
\[\begin{tikzcd}[column sep=small]
& Y_{F}(1(\aaa),\frkn) \arrow[dl, "\pi_2"] \arrow[dr,"\pi_1"] & \\
Y_{F,1}(\frkn)  & & Y_{F,1}(\frkn)
\end{tikzcd}\]
where $\pi_1$ is the natural projection map, and $\pi_2$ is the map given by the right translation action of $\begin{pmatrix}
    \varpi & 0 \\ 0 & 1
\end{pmatrix}$ on $\gl(\mathbb{A}^{f}_{F})$, where $\varpi\in\widehat{\OO_F}$ is integral adele which generates the ideal $\aaa\widehat{\OO_F}$.
We define Hecke correspondences: 
\begin{align*}
    (T_{\aaa})_{*}\coloneqq (\pi_{2})_{*}\circ(\pi_1)^{*},\\
    (T_{\aaa})^{*}\coloneqq (\pi_{1})_{*}\circ(\pi_2)^{*}
\end{align*}
as correspondences on $Y_{F,1}(\frkn)$.
When $\aaa \mid \frkn$, we write $(U_{\aaa})^{*}$ and $(U_{\aaa})_*$ instead of $(T_{\aaa})^{*}$ and $(T_{\aaa})_*$ respectively. 
We can extend these definitions to non-squarefree $\aaa$ in the usual way.

\begin{rem}
    We can use the same construction to define Hecke correspondences on the mixed level locally symmetric space $Y_{F}(\frkm,\frkn)$, but it will not be independent of the choice of generator $\varpi$ of $\aaa$, it will depend on the class of $\varpi \mod 1+\frkm\widehat{\OO_F}$.
    We will use this when $\aaa$ is generated by some $a\in\ZZ_{>0}$, and in that case we will take $\varpi=a$.
\end{rem}
For $m\in\ZZ_{>0}$, $(T_{m})^*= (T_{(m)})^{*}$. Same for $(T_{m})_{*}, (U_{m})^{*},$ and $(U_{m})_{*}$.
From the above remark, it makes sense to define the Hecke operators $(T_{m})^{*}$ and $(T_{m})_*$, for $m\in \ZZ_{>0}$, on the mixed level symmetric spaces $Y^{*}_{F}(\frkm, \frkn)$.  
Moreover, $(T_\aaa)_*$ and $(U_\aaa)_{*}$ are the transpose of $(T_{\aaa})^{*}$ and $(U_{\aaa})^{*}$ respectively with respect to Poincar\'e duality.

\section{Preliminaries related to the Asai $L$-function of Bianchi modular forms}
\label{bmfstuff}
\subsection{Bianchi modular forms}
We recall definitions of Bianchi modular forms and their Fourier expansions.  
For an integer $n\geq 0$, and a ring $R$, define $V_{n}(R)$ to be the space of homogeneous polynomials of degree $n$ in two variables $X,Y$ with coefficients in $R$. 
The space $V_{n}(R)$ can also be described as $\sym^{n}(R^{2})$.
The group of matrices $\gl(R)$ acts on $V_{n}(R)$ both from the right and left, and we denote the corresponding space by $V^{(r)}_{n}(R)$ or by $V^{(\ell)}_{n}(R)$, respectively. 
See \cite[Section 2A]{CW} for the precise definitions.

Let $U$ be an open compact subgroup of $\gl(\adeles^{f})$.
Then for any integer $k\geq 0$, there is a finite dimensional $\CC$-vector space $S_{k,k}(U)$ of \textit{cuspidal Bianchi modular forms} over $F$ of weight $(k,k)$ and level $U$, which are functions
\[\Psi : \gl(F) \backslash \gl(\adeles) / U \to V_{2k+2}^{(r)}(\CC),\]
satisfying certain transformations under right translation by the group $\CC^{\times}\cdot\mathrm{SU}_{2}(\CC),$ and satisfying certain growth conditions and harmonicity.
Fruthermore, the Fourier-Whittaker expansion of $\Psi$ is
\[\Psi\left(\begin{pmatrix}
    \textbf{y} & \textbf{x}\\
    0 & 1
\end{pmatrix}\right) = |\textbf{y}|_{\adeles} \sum_{\zeta\in F^\times} W_{f}(\zeta\textbf{y}_{f}, \Psi)W_{\infty}(\zeta\textbf{y}_{\infty})e_{F}(\zeta\textbf{x}), \]
where the Kirillov function $W_{f}(-, \Psi)$ is a locally constant function on $(\adeles^{f})^\times$ with support contained in a compact subset of $\adeles^f$, $W_{\infty} \colon \CC^{\times} \to V_{2k+2}(\CC)$ be the real analytic function defined in \cite[1.2.1(v)]{Will}, and $e_{F}\colon \adeles/F \to \CC^{\times}$ denote the unique continuous character such that its restriction to $F\otimes \RR$ is 
$x_{\infty} \mapsto e^{2\pi i\mathrm{Tr}_{F/\QQ}(x_{\infty})}$ (Also look at \cite[Theorem 2.9]{CW}).

For $U=U_{F,1}(\frkn)$, the Kirillov function $W_{f}(-, \Psi)$ is supported in $\mathcal{D}^{-1}\widehat{\Of}$, where $\mathcal{D}=(\sqrt{(-D)})$ is the different of $F$.
For an ideal $\frkm$ of $\Of$, we define a coefficient $c(\frkm,\Psi)$ as the value $W_{f}(\textbf{y}_{f},\Psi)$ for any $\textbf{y}_f$ generating the fractional ideal $\mathcal{D}^{-1}\frkm\widehat{\Of}$.
The space $S_{k,k}(U_{F,1}(\frkn))$ has an action of commuting Hecke operators $(T_{\frkm})^*$ for all ideal $\frkm$. 
Moreover, if $\Psi$ is an eigenvector for all these operators, normalized such that $c(1,\Psi)=1$, then the $\frkm$-th Hecke eigenvalue of $\Psi$ is $c(\frkm,\Psi)$. 

\subsection{The Asai $L$-function of a Bianchi modular form}
\label{asailde}
For all $d\in (\Of/\frkn)^\times$, the space $S_{k,k}(U_{F,1}(\frkn))$ has an action of diamond operators $\langle d\rangle$.
On any eigenform $\Psi$, they act via a character $\epsilon_{\Psi}\colon (\Of/\frkn)^\times \to \CC^\times$. 
Let $\epsilon_{\Psi}|_{(\ZZ/N\ZZ)^\times}$ denote the restriction of $\epsilon_{\Psi}$ to $(\ZZ/N\ZZ)^\times$.

\begin{defi}[Asai $L$-function of $\Psi$]
\label{def1asai}
    Let $\Psi$ be a normalized Hecke eigenform in $S_{k,k}(U_{F,1}(\frkn))$ and let $\theta$ be a Dirichlet character of conductor $m$. 
    Define the \emph{Asai $L$-function} of $\Psi$ by
    \begin{equation}
        L^{\mathrm{As}}(\Psi,\theta,s)\coloneqq L^{(mN)}(\theta^{2}\epsilon_{\Psi,\QQ}, 2s-2k-2)\cdot \sum_{\substack{
            n\geq 1,\\ (m,n)=1}}
        c(n\Of,\Psi)\theta(n)n^{-s},
    \end{equation}
    where $(N)=\frkn\cap\ZZ$ and $L^{(mN)}(-,s)$ is the Dirichlet $L$-function with its Euler factors at the primes dividing $mN$ removed. 
    If $\theta$ is trivial, we just write $\Las(\Psi,s)$.
\end{defi}
This Asai $L$-function $\Las(\Psi,\theta,s)$ is absolutely convergent for $\mathrm{Re}(s)$ sufficiently large (one can take $\mathrm{Re}(s)>k+3$) and has meromorphic continuation for all $s\in \CC$. See \cite[Section 2E]{CW} for more details.
For $s$ in the half-plane of convergence, $\Las(\Psi,\theta,s)$ can be written as an Euler product
\[\Laspo = \prod_{\ell \text{ prime}} P^{\mathrm{As}}_{\ell}(\Psi,\theta,s),\]
where the polynomial $P_{\ell}^{\mathrm{As}}(\Psi,\theta,s)$ depends only on $\theta(\ell)$ and the Hecke and diamond eigenvalues of $\Psi$ at the primes above $\ell$. 
For simplicity, assume $\theta$ and the nebentypus of $\Psi$ are trivial. 
For primes $\frkl$ of $F$. let $\alpha_{\frkl}$ and $\beta_{\frkl}$ denote the roots of the polynomial 
$X^{2}-c(\frkl,\Psi)X+N(\frkl)^{k+1}.$
Then we have
\begin{equation}
    \label{lfactors}
    \dfrac{1}{P^{\mathrm{As}}_{p}(\Psi,s)} = \begin{cases}
    (1-\alpha_{\PP}\alpha_{\PPP}p^{-s})(1-\alpha_{\PP}\beta_{\PPP}p^{-s})(1-\beta_{\PP}\alpha_{\PPP}p^{-s})(1-\beta_{\PP}\beta_{\PPP}p^{-s}) &\text{ if } p=\PP\PPP,\\[1em]
    (1-\alpha_\PP p^{-s})(1- p^{-2s+2})(1-\beta_\PP p^{-s}) &\text{ if } p=\PP,\\[1em]
    (1-\alpha^{2}_{\PP}p^{-s})(1-p^{-s+1})(1-\beta^{2}_{\PP}p^{-s}) &\text{ if } p=\PP^2.
\end{cases}
\end{equation}
See \cite[Section 3]{Ghate1} for more details.

\begin{rem}
    \label{rem1}
    The Asai $L$-function appearing in Definition \ref{def1asai} is an "imprimitive" $L$-function. 
    We can define a "primitive" Asai $L$-function using automorphic representations attached to Bianchi modular forms.
    Another way to define the Asai $L$-function of a Bianchi modular form $\Psi$ is the $L$-function attached to the tensor induction to $\QQ$ of Galois representation associated with $\Psi$.
    See \cite[Section 2F]{CW} and \cite[Sections 3, 4]{Ghate1}.
\end{rem}

\begin{lem}{\cite[Lemma 2.11]{CW}}
    Let $\Psi$ be a normalized Bianchi eigenform of level $\mathfrak{n}$ coprime to $p$ and let $\Psi'$ be a $p$-stabilization of $\Psi$ of level $p\mathfrak{n}$.
    Let $\alpha=c(p\OO_{F},\Psi')$.
    Then
    \[\Las(\Psi,\theta,s)=\Las(\Psi', \theta,s)\]
    for any non-trivial Dirichlet character $\theta$ of $p$-power conductor, and 
    \[L^{\mathrm{As}}(\Psi', s)=(1-\alpha p^{-s})^{-1}L^{\mathrm{As}, (p)}(\Psi,s).\]
\end{lem}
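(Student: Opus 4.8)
The statement to prove is \cite[Lemma 2.11]{CW}: for a normalized Bianchi eigenform $\Psi$ of level $\frkn$ coprime to $p$ and a $p$-stabilization $\Psi'$ of level $p\frkn$ with $\al = c(p\OO_F,\Psi')$, we have $\Las(\Psi,\theta,s) = \Las(\Psi',\theta,s)$ for nontrivial $\theta$ of $p$-power conductor, and $L^{\mathrm{As}}(\Psi',s) = (1-\al p^{-s})^{-1}L^{\mathrm{As},(p)}(\Psi,s)$.

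The plan is to work directly from Definition \ref{def1asai}, comparing the Dirichlet series term by term. First I would record how the Hecke eigenvalues transform under $p$-stabilization: for ideals $\frkm$ of $\Of$ coprime to $p$ one has $c(\frkm,\Psi') = c(\frkm,\Psi)$, while at the primes above $p$ the eigenvalue of $\Psi'$ is a chosen root of the relevant Hecke polynomial. When $p$ is split or ramified in $F$ the precise local recipe differs, but in every case the Dirichlet series $\sum_{(n,m)=1} c(n\Of,\Psi')\theta(n)n^{-s}$ only involves the coefficients $c(n\Of,\cdot)$ at rational integers $n$; I would expand this as an Euler product over rational primes $\ell$ using the multiplicativity of $n\mapsto c(n\Of,\cdot)$ and compare the Euler factor at each $\ell$ for $\Psi$ versus $\Psi'$. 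For $\ell \neq p$ the factors agree trivially. For $\ell = p$: since $\theta$ has $p$-power conductor, if $\theta$ is nontrivial then $\theta(p)=0$, so the $p$-Euler factor of $\sum c(n\Of,\Psi')\theta(n)n^{-s}$ is $1$, and likewise the removed-Euler-factor Dirichlet $L$-function $L^{(pN)}(\theta^2\epsilon_{\Psi,\QQ},2s-2k-2)$ already omits $p$; hence the two Asai $L$-functions coincide. This gives the first claim.

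For the second claim ($\theta$ trivial) the $p$-Euler factor no longer vanishes, so I would isolate it. Write $\Las(\Psi,s)$ and $\Las(\Psi',s)$ each as a product of $p$-Euler factor times the away-from-$p$ part; the away-from-$p$ parts are literally equal since $c(\frkm,\Psi')=c(\frkm,\Psi)$ for $\frkm$ coprime to $p$ and the Dirichlet $L$-factor $L^{(mN)}$ is the same (here $m=1$, so $L^{(N)}$ in both). By definition $L^{\mathrm{As},(p)}(\Psi,s)$ is $\Las(\Psi,s)$ with its $p$-Euler factor removed, so it remains to check that the $p$-Euler factor of $\Las(\Psi',s)$ is exactly $(1-\al p^{-s})^{-1}$. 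Using the factorizations in \eqref{lfactors} for $\Las(\Psi,s)$ in each of the three split/inert/ramified cases together with the $p$-stabilization recipe (which replaces the length-two-or-three local polynomial by $(1-\al p^{-s})$, where $\al$ is the product-of-$\alpha$'s type quantity that is $c(p\OO_F,\Psi')$), I would verify the identity case by case; this is a short computation matching the listed local $L$-factor for $\Psi$ against the single-Satake-parameter factor for the stabilization $\Psi'$, together with the observation that the Dirichlet $L$-factor at $p$ (the middle factor $(1-p^{-2s+2})$ or $(1-p^{-s+1})$) is precisely what gets absorbed so that the remaining factor is the stated geometric series in $\al p^{-s}$.

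The only genuinely delicate point is bookkeeping the three cases of the splitting behaviour of $p$ in $F$ and matching the local recipe for $\al = c(p\OO_F,\Psi')$ with the factorization in \eqref{lfactors}; once the local factors are lined up, everything else is formal manipulation of Euler products. Since this is a verbatim restatement of \cite[Lemma 2.11]{CW}, I would in practice simply cite that reference for the detailed case check and present the term-by-term Euler product comparison above as the conceptual skeleton.
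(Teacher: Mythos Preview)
The paper does not supply its own proof of this lemma; it is stated with a bare citation to \cite[Lemma 2.11]{CW}. Your outline is correct and is essentially the standard Euler-product argument one would expect that reference to contain.

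One simplification for the second claim: you do not need the case-by-case comparison with \eqref{lfactors}. Those formulas describe the $p$-Euler factor of $\Las(\Psi,s)$, not of $\Las(\Psi',s)$, and you never have to touch them. Since $\Psi'$ has level $p\frkn$, the integer $N'=p\frkn\cap\ZZ$ is divisible by $p$, so the Dirichlet factor $L^{(N')}(\epsilon_{\Psi',\QQ},2s-2k-2)$ already omits its Euler factor at $p$. Hence the whole $p$-Euler factor of $\Las(\Psi',s)$ comes from the Dirichlet series, and that is
\[
\sum_{r\ge 0} c(p^{r}\Of,\Psi')\,p^{-rs}
=\sum_{r\ge 0}\alpha^{r}p^{-rs}
=(1-\alpha p^{-s})^{-1},
\]
uniformly in the splitting behaviour of $p$, since $c(p^{r}\Of,\Psi')=\alpha^{r}$ follows from the $U_{\PP}$-eigen property at each prime $\PP\mid p$. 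The away-from-$p$ Euler factors agree with those of $\Psi$, which by definition assemble to $L^{\mathrm{As},(p)}(\Psi,s)$; this gives the stated identity without any case analysis.
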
 

\subsection{Bianchi modular symbols}
\label{bms}
For any field extension $F'$ of $F$, we define the left $F'[\mathrm{GL}_{2}(F)]$-module $V_{kk}(F')\coloneqq V_{k}^{(\ell)}(F') \otimes V_{k}^{(\ell)}(F')^{\sigma}$.
The action of $\mathrm{GL}_{2}(F)$ is given by: For any $B\in\mathrm{GL}_{2}(F)$, $B$ acts in the usual way on the first component and via its complex conjugate $B^{\sigma}$ on the second component.
Due to the action of $\gl(F)$, the space $V_{kk}(F')$ gives rise to a local system of $F$-vector spaces on $Y_{F,1}(\frkn)$, which we also denote by $V_{kk}(F')$.

\begin{thm}[Eichler-Shimura-Harder, {\cite[Theorem 2.18]{CW}}]
    \label{eichshim}
    If $\Psi\in S_{k k}(U_{F,1}(\frkn))$ is a normalized Hecke eigenform, we have the following isomorphism of $1$-dimensional $\CC$-vector spaces
    \[S_{k, k}(U_{F,1}(\frkn))[\Psi] \cong \Ho_{\mathrm{c}}(Y_{F,1}(\frkn), V_{kk}(\CC))[\Psi] \cong \Ho(Y_{F,1}(\frkn), V_{kk}(\CC))[\Psi].\]
    induced by a canonical Hecke-equivariant injection: $$S_{k,k}(U_{F,1}(\frkn)) \hookrightarrow \Ho_{\mathrm{c}}(Y_{F,1}(\frkn), V_{kk}(\CC)).$$
\end{thm}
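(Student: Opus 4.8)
The final displayed result is the Eichler–Shimura–Harder isomorphism (Theorem~\ref{eichshim}): for a normalized Hecke eigenform $\Psi \in S_{k,k}(U_{F,1}(\frkn))$, there are canonical isomorphisms of $1$-dimensional $\CC$-vector spaces
\[
S_{k,k}(U_{F,1}(\frkn))[\Psi] \cong \Ho_{\mathrm{c}}(Y_{F,1}(\frkn), V_{kk}(\CC))[\Psi] \cong \Ho(Y_{F,1}(\frkn), V_{kk}(\CC))[\Psi],
\]
induced by a canonical Hecke-equivariant injection $S_{k,k}(U_{F,1}(\frkn)) \hookrightarrow \Ho_{\mathrm{c}}(Y_{F,1}(\frkn), V_{kk}(\CC))$.

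Hmm, this is a well-known classical theorem. Let me think about how I would prove it / what the proof structure is.
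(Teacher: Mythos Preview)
Your proposal is not a proof: you have only restated the theorem and remarked that it is classical, without supplying any argument, construction, or citation. There is no content here to evaluate.

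That said, the paper itself does not prove this result either; it simply refers the reader to \cite[Theorem 2.18]{CW} (and ultimately to Harder's work on the cohomology of arithmetic groups). So the appropriate ``proof'' in this context is a citation to the literature. If you wish to sketch the argument, the key points are: (i) associate to $\Psi$ a $V_{kk}(\CC)$-valued harmonic differential $1$-form on $Y_{F,1}(\frkn)$ via the Wigner/Whittaker function $W_\infty$, giving the Hecke-equivariant map $S_{k,k} \to \Ho_{\mathrm{c}}$; (ii) invoke Harder's decomposition of the cohomology of $Y_{F,1}(\frkn)$ into cuspidal and Eisenstein parts to see that this map is injective and that the $\Psi$-eigenspace in $\Ho_{\mathrm{c}}$ (and in $\Ho$) is one-dimensional, using strong multiplicity one for $\mathrm{GL}_2$ over $F$. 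Either provide these references and steps, or follow the paper and cite \cite[Theorem 2.18]{CW}.
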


Thus from Theorem \ref{eichshim}, we have $\eta_{\Psi}\in \Ho_{\mathrm{c}}(Y_{F,1}(\frkn),V_{kk}(\CC))$, corresponding to an eigenform $\Psi\in S_{k,k}(U_{F,1}(\frkn))$. 
Fix an eigenform $\Psi\in S_{k,k}(U_{F,1}(\frkn))$, and let $F'/\QQ$ be a finite extension, large enough, containing $F$ and the Hecke eigenvalues of $\Psi$, $\mathfrak{P}$ be a prime of $F'$ above $p$, and $\OO_{F',(\mathfrak{P})}$ be the valuation ring of $F'$ at $\mathfrak{P}$. 
From \cite[Section 2H]{CW}, we can regard $\Ho_{\mathrm{c}}(Y_{F,1}(\frkn), V_{kk}(\OO_{F',(\mathfrak{P})}))$ as an $\OO_{F',(\mathfrak{P})}$-lattice in $\Ho_{\mathrm{c}}(Y_{F,1}(\frkn, V_{kk}(F'))$. 
Moreover, $\Ho_{\mathrm{c}}(Y_{F,1}(\frkn), V_{kk}(\OO_{F',(\mathfrak{P})}))$ is preserved by the action of Hecke operators $(T_{\aaa})^*$ and $(U_\aaa)^*$.

Define
\[\Ho_{\mathrm{c}}(Y_{F,1}(\frkn), V_{kk}(\OO_{F',(\mathfrak{P})}))[\Psi] \coloneqq \Ho_{\mathrm{c}}(Y_{F,1}(\frkn), V_{kk}(F'))[\Psi] \cap \Ho_{\mathrm{c}}(Y_{F,1}(\frkn), V_{kk}(\OO_{F',(\mathfrak{P})})). \]
From \cite[Proposition 2.20]{CW}, we know that there exists a complex period $\Omega_{\Psi}\in\CC^\times$, such that the quotient
    $\phi_{\Psi}\coloneqq \dfrac{\eta_{\Psi}}{\Omega_{\Psi}}$
    forms an $\OO_{F',(\mathfrak{P})}$-basis of $\Ho_{\mathrm{c}}(Y_{F,1}(\frkn), V_{kk}(\OO_{F',(\mathfrak{P})}))[\Psi]$.
\begin{defi}
    We define
    \[\phi^{*}_{\Psi}\coloneqq j^{*}(\phi_{\Psi})\in \Ho_{\mathrm{c}}(\YFss(\frkn), V_{kk}(\OO_{F',(\mathfrak{P})}))\]
    which is the pullback of $\phi_{\Psi}$ under $j:\YFss(\frkn) \to Y_{F,1}(\frkn).$
\end{defi}
This modular symbol $\phi^{*}_{\Psi}$ will be used later to define the $p$-adic Asai $L$-function of $\Psi$.    

From now onward, as described in the Introduction, the ideal $\frkn\subset \OO_{F}$ is divisible by all the primes above $p$.

\begin{rem}
    By an \emph{eigenform} $\Psi$ of weight $(k,k)$ and level $\frkn$, i.e., of level $U_{F,1}(\frkn)$ we mean either $\Psi$ is a \emph{newform} or $\Psi$ is a \emph{$p$-stabilized $p$-regular eigenform} in the sense:
    \begin{itemize}
        \item $\Psi$ is an eigenform and for each $\PP\mid p$ in $F$, $U_{\PP}(\Psi) = c(\PP,\OO_{F})\Psi$, with $c(\PP,\OO_{F})\neq 0$,
        \item there exists an ideal $\mathfrak{M}$ coprime with $p$ and a Bianchi newform $\FF\in S_{(k,k)}(U_{F,1}(\mathfrak{M}))$ such that $\frkn=\mathfrak{M}\prod_{\PP\mid p}\PP$ and $\Psi$ is obtained from $\FF$ by successive $\PP$-stabilization,
        \item for each $\PP\mid p$, the roots of $X^{2}-c(\PP,\FF)X+\epsilon_{\FF}(\PP)N(\PP)^{k+1}$ are distinct, where $\epsilon_{\FF}$ is the nebentypus of $\FF$.
    \end{itemize}
\end{rem}


\section{Asai-Eisenstein elements}
\label{asaieisendef}
In this section, we first recall definitions of modular units and Kato's Siegel units.
After that, we recall the definitions and constructions of weight $k=2$ as well as higher weight $k> 2$ \textit{Asai-Eisenstein elements} from \cite[Sections 3, 5]{CW}. 
These are compatible families of classes in the Betti cohomology of locally symmetric spaces of Bianchi modular forms. 
Note that there is no \'etale cohomology in the Bianchi setting since the Bianchi manifolds, the locally symmetric spaces, are real manifolds of dimension $3$ and hence are not algebraic varieties. 
Loeffler--Williams constructed these elements by pushing forward Kato's Siegel units to the Betti cohomology (under some maps) but using methods similar to the \'etale cohomology setting from \cite{LLZ_asai}. 

\subsection{Modular units and Siegel units}
A \emph{modular unit} on $Y_{\QQ}(U)$ is an element of $\OO(Y_{\QQ}(U))^\times$, that is, a regular function on $Y_{\QQ}(U)$ with no zeros or poles, where $U\subset \gl(\widehat{\ZZ})$ is an open compact subgroup.
\begin{defi}
    For $N\geq 1$, and $c>1$ an integer coprime to $6N$, let 
    \[\prescript{}{c}{g_N} \in \OO(Y_{\QQ, 1}(N))^\times \]
    be the Kato's Siegel unit $\prescript{}{c}{g_{0,1/N}}$, which is defined in \cite{Kato}.
\end{defi}
By an abuse of notation, we use $\cgn$ again for the pullback of this unit to the mixed modular curve $Y_{\QQ}(M, N)$, for any $M\geq 1$. 

\begin{prop}{{\cite[Section 2.11, Proposition 3.11]{Kato}}}
\label{prop1}
The Siegel units are norm compatible, that is, if $N\mid N'$ and $N$ and $N'$ have the same prime divisors, then under the natural projection map
    $ \mathrm{pr}: Y_{\QQ}(M, N') \to Y_{\QQ}(M, N)$
    we have
    \[(\mathrm{pr})_{*}(\prescript{}{c}{g_{N'}}) = \cgn .\]
\end{prop}


\subsection{Weight $2$ Asai-Eisenstein elements}
For a modular unit $u\in \OO(Y_{\QQ, 1}(N))^\times$, one can associate a Betti realization to $u$: $C(u) \in \mathrm{H}^{1}(Y_{\QQ, 1}(N), \ZZ)$. See \cite[Proposition 3.2]{CW}.

\begin{defi}[Betti-Eisenstein class]
Let $\cC_{N}\coloneqq C(\cgn) \in \Ho(Y_{\QQ, 1}(N),\ZZ)$ be the Betti realization of $\cgn$.    
\end{defi}
Proposition \ref{prop1} implies that if $p\mid N$, the classes $\cC_{Np^{r}}$, for $r\geq 0$, are compatible under push-forward, and hence defines a class
\[\cC_{Np^{\infty}} \in \varprojlim_{r} \Ho(Y_{\QQ, 1}(Np^{r}), \ZZ). \]
Let $\frkn$ be an ideal in $\OO_{F}$ divisible by some integer $\geq 4$ and recall that 
$\YFss(\frkn) = \GG^{*}_{F,1}(\frkn) \backslash \hh_{3}.$
\begin{defi}
    Let $m\geq 1$ be an integer and $a\in \OO_F$.
    Consider the map
    \[\kam: \YFss(m^{2}\frkn) \to \YFss(\frkn)\]
    given by the left action of $\begin{pmatrix}
        1 & a/m \\
        0 & 1
    \end{pmatrix} \in \mathrm{SL}_{2}(F)$ on $\hh_3$.
\end{defi}
Since $\frkn$ is divisible by some integer $\geq 4$, we have maps
\[Y_{\QQ,1}(m^{2}N) \xrightarrow{i} \YFss(m^{2}\frkn) \xrightarrow{\kam} \YFss(\frkn),\]
where $(N)=\ZZ\cap\frkn$.

Note that there are isomorphisms
\begin{align*}
    \Ho(Y_{\QQ,1}(m^{2}N),\ZZ) &\cong \mathrm{H}^{\mathrm{BM}}_{1}(Y_{\QQ,1}(m^{2}N),\ZZ), &
    \Ht(\YFss(m^{2}\frkn),\ZZ) &\cong \mathrm{H}^{\mathrm{BM}}_{1}(\YFss(m^{2}\frkn),\ZZ),
\end{align*}
where $\mathrm{H}^{\mathrm{BM}}_{*}$ denotes Borel-Moore homology (homology with non-compact support). See \cite{BM} for the reference.
We define a push-forward map 
\[i_{*}: \Ho(Y_{\QQ,1}(m^{2}N),\ZZ) \rightarrow \Ht(\YFss(m^{2}\frkn),\ZZ),\]
since Borel-Moore homology is covariantly functorial for proper maps.

\begin{defi}[Weight $2$ Asai-Eisenstein elements]
    For $m\geq 1$ integer, $a\in \OO_{F}/m\OO_{F}$, and $c > 1$ integer coprime to $6mN$, define
    \[\asaieisen_{m,\frkn,a} \coloneqq (\kam)_{*}\circ (i)_{*}(\cC_{m^{2}N})\in \Ht(\YFss(\frkn),\ZZ), \]
    and
    \[\cPhi_{\frkn,a}^{r} = \sum_{t \in (\ZZ/p^{r})^{\times}} \asaieisen_{p^{r},\frkn,at}^{} \otimes [t] \in \htt\otimes \ZZ_{p}[(\ZZ/p^{r})^{\times}].\]
\end{defi}

\begin{rem}
We will use another definition for the Asai--Eisenstein element $\asaieisen_{m,\frkn,a}$ later for our patching arguments involving $\YFs(m,m\frkn)$.    
\end{rem}
\begin{thm}[Loeffler--Williams]
\label{thmlw}
\hfill
    \begin{enumerate}
        \item If $\frkn\mid\frkn'$ are two ideals of $\OO_F$ with the same prime factors, then push-forward along the map $Y_{F,1}(\frkn')\to Y_{F,1}(\frkn)$ sends $\cPhi_{\frkn',a}^{r}$ to $\cPhi_{\frkn,a}^{r}$ for any valid choices of $c, a, r$.

        \item Let $r\geq 1$ be an integer, $a$ be a generator of $\OO_{F}/(p\OO_{F}+\ZZ)$, and let
        \[\pi_{r+1}: \htt\otimes \Zp[(\ZZ/p^{r+1})^\times] \rightarrow \htt\otimes \Zp[(\ZZ/p^{r})^\times] \]
        denote the map which is the identity on the first component and the natural quotient map on the second component.
        Then we have 
        \begin{equation}
            \pi_{r+1}(\cPhi_{\frkn, a}^{r+1}) = (U_{p})_{*}\cdot \cPhi_{\frkn, a}^{r},
        \end{equation}
        where the Hecke operator $(U_{p})_{*}$ acts via its action on $\htt$. 
        Similarly, when $r=0$, we have
        \[\pi_{1}(\cPhi_{\frkn, a}^{1}) = ((U_{p})_{*}-1)\cdot \cPhi_{\frkn, a}^{0}.\]
    \end{enumerate}
\end{thm}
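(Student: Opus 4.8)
The statement is Theorem \ref{thmlw}, which is attributed to Loeffler–Williams; the task is to explain how the norm-compatibility of the Asai–Eisenstein elements $\cPhi_{\frkn,a}^r$ reduces to the norm-compatibility of Kato's Siegel units (Proposition \ref{prop1}(1)) together with a careful bookkeeping of the maps $\kam$, $i_*$, and the Hecke correspondence $(U_p)_*$. Part (1) is the easier half: pushing forward along $Y_{F,1}(\frkn')\to Y_{F,1}(\frkn)$ (equivalently on the $Y^*_{F,1}$-level via $j$), one observes that the diagram relating $\kappa_{a/p^r}$, the closed immersion $i$, and the level maps commutes, so the claim follows from the fact that $(\mathrm{pr})_*(\prescript{}{c}{g_{N'}}) = \cgn$ when $N\mid N'$ have the same prime divisors, applied componentwise in the variable $[t]\in\ZZ_p[(\ZZ/p^r)^\times]$. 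I would write this as a short compatibility-of-functors argument: $C(-)$ is natural under push-forward, $i_*$ is covariant for proper maps (Borel–Moore homology), and $(\kam)_*$ commutes with the level-lowering maps because the corresponding group-theoretic diagram on $\mathrm{SL}_2(F)\backslash \hh_3$ commutes.

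The substance is in part (2). First I would unwind the definition: $\cPhi_{\frkn,a}^{r+1} = \sum_{t\in(\ZZ/p^{r+1})^\times}(\kappa_{at/p^{r+1}})_*(i_*(\cC_{p^{2(r+1)}N}))\otimes[t]$, and applying $\pi_{r+1}$ collapses the sum over $t$ into fibres over $(\ZZ/p^r)^\times$. So the left-hand side becomes $\sum_{s\in(\ZZ/p^r)^\times}\left(\sum_{t\equiv s}(\kappa_{at/p^{r+1}})_*\,i_*(\cC_{p^{2(r+1)}N})\right)\otimes[s]$, and the claim is that the inner sum equals $(U_p)_*$ applied to $(\kappa_{as/p^r})_*\,i_*(\cC_{p^{2r}N})$. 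The key is to recognize $(U_p)_*$ as a correspondence built from the map $\begin{pmatrix}\varpi & 0\\ 0 & 1\end{pmatrix}$ of the excerpt's Hecke formalism, and to match its two legs (the degeneracy map and the $\varpi$-translation) against the $p$ distinct lifts $t$ of $s$ and the change of Siegel unit from level $p^{2r}N$ to level $p^{2(r+1)}N$. Concretely, the $p$ translates $\kappa_{a(s+ip^r)/p^{r+1}}$ for $i=0,\dots,p-1$, composed with the level map, should reproduce exactly the Hecke orbit of $\kappa_{as/p^r}$ under $(U_p)_*$; this is the ``explicit computation'' alluded to in the plan (Section \ref{patchsec}), and it parallels \cite[Theorem 6.2.4]{KLZ1} and \cite[Theorem 8.1.4]{LLZ_asai}. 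The requirement that $a$ generate $\OO_F/(p\OO_F+\ZZ)$ is what guarantees the relevant degeneracy maps on the Bianchi side are the right ones (so that the $\QQ$-rational Hecke operator $U_p$ on $\htt$ lifts the Siegel-unit computation on the modular curve $Y_{\QQ}$).

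Finally, the $r=0$ case differs because $(\ZZ/p^0)^\times$ is trivial and the norm map from $Y_\QQ(p,pN)\to Y_\QQ(1,N)$ is not surjective on $\pi_0$ in the same way — one of the $p+1$ or $p$ terms in the Hecke correspondence degenerates, producing the extra $-1$: precisely, $\pi_1(\cPhi^1_{\frkn,a}) = ((U_p)_* - 1)\cPhi^0_{\frkn,a}$ because the $t\equiv 0$ contribution, which would have completed $(U_p)_*$ to the full Hecke operator at level $1$, is absent (it corresponds to the ``old'' unit at level $N$, contributing the identity term with a sign). I expect the main obstacle to be the explicit matching in part (2): tracking how the translation matrices $\begin{pmatrix}1 & at/p^{r+1}\\ 0 & 1\end{pmatrix}$ interact with the $\varpi$-translation defining $(U_p)_*$, and verifying that the coset decomposition of the Hecke correspondence matches the index set of lifts $t\mapsto s$ — together with checking that the push-forward of the Siegel unit $\cC_{p^{2(r+1)}N}\mapsto \cC_{p^{2r}N}$ carries exactly the right normalization so no spurious scalar appears. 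Since this is Loeffler–Williams's theorem, the cleanest write-up cites \cite[Theorem 3.13, Section 5]{CW} for the full verification and records here only the structure of the argument.
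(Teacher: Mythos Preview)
Your proposal is correct and aligns with the paper's approach: the paper's proof is simply the citation ``See \cite[Lemma 3.12, Theorem 3.13]{CW}'', and your sketch accurately outlines the structure of that argument (reducing to the Siegel-unit norm relation of Proposition~\ref{prop1} and matching the coset decomposition of $(U_p)_*$ against the $p$ lifts of $s$ modulo $p^{r+1}$), before likewise deferring to \cite{CW} for the full verification. There is nothing to correct; if anything, your write-up supplies more expository detail than the paper itself does.
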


\begin{proof}
    See \cite[Lemma 3.12, Theorem 3.13]{CW}.
\end{proof}

We need the following rephrasing of Theorem \ref{thmlw} in terms of $\asaieisen_{p^{r},\frkn,at}$ elements:
\begin{itemize}
    \item If $\frkn'\mid\frkn$, then $\asaieisen_{p^{r},\frkn',at}$ maps to $\asaieisen_{p^{r},\frkn,at}$ along the pushforward map $\YFs(\frkn')\to\YFs(\frkn)$ for all $t\in \Zpr$.
    \item For $r>0$ and for each $t\in \Zpr$, we have
    \[\sum_{\substack{s \in \ZZ/p^{r+1}\\ s\equiv t \mod p^r}}\asaieisen_{p^{r+1},\frkn,as} = (U_p)_{*}\asaieisen_{p^{r},\frkn,at}.\]

    \item For $r=0$, we have
    \[\sum_{t\in (\ZZ/p)^\times}\asaieisen_{p,\frkn,at} = ((U_{p})_{*}-1)\asaieisen_{1.\frkn,a}.\]
    For more details, see the proof of \cite[Theorem 3.13]{CW}.
\end{itemize}

\subsubsection{Another description of weight $2$ Asai-Eisenstein elements}
Let $a\in\OO_F$. 
Consider the composite map
\[Y_{\QQ}(m,mN)\xhookrightarrow{\iota}\YFs(m,m\frkn) \xrightarrow{u_{a}} \YFs(m, m\frkn),\]
where $u_{a}$ is the action of the matrix $\begin{pmatrix}
    1 & -a \\0 & 1
\end{pmatrix}$ 
on $\YFs(m,m\frkn)$. 
Note that $\begin{pmatrix}
    1 & -a \\0 & 1
\end{pmatrix}$
preserves each component of $\YFs(m,m\frkn)$.

\begin{defi}[Zeta elements]
Define $\czeta_{m,\frkn,a}$ to be the image of $\cC_{mN} = C(\prescript{}{c}{g}_{mN}) \in \Ho(Y_{\QQ}(m,mN), \ZZ)$ under the pushforward $(u_{a}\circ\iota)_{*}$,i.e.
\[\czeta_{m,\frkn,a} = (u_{a}\circ\iota)_{*}(\cC_{mN})\in \Ht(\YFs(m,m\frkn),\ZZ).\]
\end{defi}
For $t\in (\ZZ/m\ZZ)^{\times}$, let $\mathrm{proj}_{t}: \YFs(m,m\frkn) \to \YFs(m,m\frkn)^{(t)}$ be the projection map, and $(\mathrm{proj}_{t})_{*} : \Ht(\YFs(m,m\frkn),\ZZ) \to \Ht(\YFs(m,m\frkn),\ZZ)^{(t)}\coloneqq \Ht(\YFs(m,m\frkn)^{(t)},\ZZ)$ be the projection induced by $\mathrm{proj}_{t}$.

Let $\czeta_{m,\frkn,a}(t)$ be the projection of $\czeta_{m,\frkn,a}$ to the direct summand of $\Ht(\YFs(m,m\frkn),\ZZ)$ given by the $t$-th component. In other words, 
\[\czeta_{m,\frkn,a}(t) = (\mathrm{proj}_{t})_{*}(\czeta_{m,\frkn,a}),\]
and hence we get
\[\czeta_{m,\frkn,a} = \sum_{t} \czeta_{m,\frkn,a}(t).\]

We consider the map
\[s_{m} : \YFs(m,m\frkn) \to \YFss(\frkn)\]
given by the action of $\begin{pmatrix}
    m & 0 \\ 0 & 1
\end{pmatrix}$.
This map corresponds to $(z,t) \mapsto (z/m, t/m)$ on $\hh_3$.

\begin{lem}{\cite[Lemma 4.5]{CW}}
The pushforward of $\cC_{m^{2}N}$ along the map $Y_{\QQ,1}(m^{2}N) \to Y_{\QQ}(m,mN),$ given by $z\mapsto mz$ on $\hh$, is $\cC_{mN}$.
\end{lem}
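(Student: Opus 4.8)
The claim to prove is the Loeffler--Williams Lemma 4.5: the pushforward of $\cC_{m^2N}$ along the degeneracy map $Y_{\QQ,1}(m^2N)\to Y_{\QQ}(m,mN)$ given by $z\mapsto mz$ on $\hh$ is $\cC_{mN}$. My plan is to reduce this to the norm-compatibility of Kato's Siegel units stated in Proposition \ref{prop1}(1), transported through the Betti realization functor $C(-)$.

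First I would unpack the map $z\mapsto mz$ adelically. The level group of $Y_{\QQ,1}(m^2N)$ is $U_{\QQ}((m^2N),(m^2N))$ (upper-triangular-unipotent-type congruence conditions), while $Y_{\QQ}(m,mN)$ has level $U_{\QQ}(m,mN)$. Conjugating the source level group by $\diag(m,1)$ and composing with the natural inclusion, the map $z\mapsto mz$ factors as a natural projection $Y_{\QQ,1}(m^2N)\to Y_{\QQ}(m^2, m^2N)$ (identity on $\hh$) followed by the isomorphism $Y_{\QQ}(m^2,m^2N)\xrightarrow{\sim} Y_{\QQ}(m,mN)$ given by the $\diag(m,1)$-translation. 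Actually the cleanest route: observe that both curves classify elliptic curves with extra structure, and $z\mapsto mz$ on $\hh$ corresponds on moduli to an isogeny-type degeneracy map. Since I am free to use any earlier result, I would instead argue via Siegel units directly: the pullback/pushforward behaviour of $\cgn$ under such degeneracy maps is exactly the content of Kato's distribution relations, and Proposition \ref{prop1}(1) handles the case where only the ``bottom'' level $N$ changes among curves with the same prime divisors — but here the map also rescales $z$, so I need the companion relation $[\diag(m,1)]_* \,\cgn[{}_{m^2N}] = \cgn[{}_{mN}]$ type identity. This is a standard property of $\prescript{}{c}{g_{0,1/N}}$ recorded in \cite{Kato}.

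So the key steps, in order: (1) Identify the map $z\mapsto mz:\hh\to\hh$ with a morphism of modular curves $\varphi: Y_{\QQ,1}(m^2N)\to Y_{\QQ}(m,mN)$ at the level of moduli/adelic double cosets, checking it is proper (indeed finite) so that pushforward on $\Ho$ is defined. (2) Verify that $\varphi$ is compatible with the analogous map on the algebraic/de Rham side, and that the Siegel unit $\prescript{}{c}{g}_{m^2N}$ on $Y_{\QQ,1}(m^2N)$ pushes forward to $\prescript{}{c}{g}_{mN}$ on $Y_{\QQ}(m,mN)$ — this is the arithmetic heart, extracted from the norm-compatibility and rescaling properties of Kato's units in \cite{Kato}, possibly after pulling back to a common cover. (3) Apply the Betti realization $C(-)$, which is functorial for pushforward along proper maps (same formalism as used for the Betti-Eisenstein classes $\cC_N$ and for $i_*$ via Borel--Moore homology earlier in Section \ref{asaieisendef}), to conclude $\varphi_*\,\cC_{m^2N} = \varphi_*\,C(\prescript{}{c}{g}_{m^2N}) = C(\prescript{}{c}{g}_{mN}) = \cC_{mN}$.

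The main obstacle I anticipate is step (2): making precise the interaction between the $z\mapsto mz$ rescaling and the normalization of the Siegel unit. The unit $\prescript{}{c}{g_{0,1/N}}$ depends on the chosen torsion point $1/N$ (equivalently the cusp/level structure), and under $z\mapsto mz$ the relevant torsion datum changes in a way one must match against the defining data of $\prescript{}{c}{g}_{mN}$ on $Y_{\QQ}(m,mN)$; getting the indices and the factor structure $\cgn = (c^2-\langle c\rangle)g_N$ to line up (in particular checking no spurious constant or diamond-operator twist appears) requires care. Everything else is the functoriality bookkeeping that parallels the already-cited \cite[Lemma 3.12]{CW} and Proposition \ref{prop1}.
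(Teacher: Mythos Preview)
The paper does not give its own proof of this lemma; it is stated as a direct citation of \cite[Lemma 4.5]{CW}. Your outline is correct and is the natural argument (and the one used in \cite{CW}): one checks the corresponding identity at the level of Siegel units, namely that the norm of $\prescript{}{c}{g}_{m^2N}$ along the degeneracy map $z\mapsto mz$ is $\prescript{}{c}{g}_{mN}$ (this is a standard distribution relation from \cite{Kato}, slightly different from the one recorded in Proposition \ref{prop1}(1) because of the rescaling), and then one passes to Betti realization using functoriality of $C(-)$ under pushforward. Your anticipated obstacle in step (2) is real but mild: once the map is written adelically as right translation by $\mathrm{diag}(m^{-1},1)$, the transformation of $\prescript{}{c}{g_{0,1/N}}$ is exactly one of Kato's listed relations, and no spurious diamond twist appears because the $c$-smoothing factor $(c^2-\langle c\rangle)$ is compatible with the degeneracy map.
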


\begin{thm}{\cite[Proposition 4.4]{CW}}
    Let $a\in\OO_F$ be a generator of $\OO_{F}/(p\OO_F + \ZZ)$. 
    We have 
    \[(s_{p^{r}})_{*}(\czeta_{p^{r},\frkn,a}(t)) = \asaieisen_{p^{r},\frkn,ta},\]
    and thus
    \[\cPhi^{r}_{\frkn,a} = \sum_{t\in (\ZZ/p^{r}\ZZ)^{\times}} (s_{p^{r}})_{*}(\czeta_{p^{r},\frkn,a}(t))\otimes [t]. \]
\end{thm}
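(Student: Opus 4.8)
The plan is to reduce the asserted identity to a statement about pushforward maps between locally symmetric spaces applied to the single class $\cC_{m^{2}N}\in\Ho(Y_{\QQ,1}(m^{2}N),\ZZ)$ (writing $m=p^{r}$ throughout), and then to verify that statement by a computation with connected components and upper‑triangular matrices. First, by the lemma just recorded, $\cC_{mN}$ is the pushforward of $\cC_{m^{2}N}$ along the map $\psi_{m}\colon Y_{\QQ,1}(m^{2}N)\to Y_{\QQ}(m,mN)$, $z\mapsto mz$. Hence, by functoriality of pushforward along proper maps (equivalently, on Borel--Moore homology),
\[\czeta_{m,\frkn,a}=(u_{a}\circ\iota\circ\psi_{m})_{*}(\cC_{m^{2}N}),\]
and therefore $(s_{m})_{*}\bigl(\czeta_{m,\frkn,a}(t)\bigr)=(s_{m})_{*}(\mathrm{proj}_{t})_{*}(u_{a}\circ\iota\circ\psi_{m})_{*}(\cC_{m^{2}N})$. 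On the other hand $\asaieisen_{m,\frkn,ta}=(\kappa_{ta/m})_{*}\circ i_{*}(\cC_{m^{2}N})$ by definition. So it suffices to prove that these two recipes produce the same class, and I would do this by identifying the composite $s_{m}\circ\mathrm{proj}_{t}\circ u_{a}\circ\iota\circ\psi_{m}$, on the connected component of $Y_{\QQ}(m,mN)$ actually met by $\psi_{m}$, with $\kappa_{ta/m}\circ i$.

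The substance is the connected-component bookkeeping for $\YFs(m,m\frkn)$, whose components are indexed by $(\ZZ/m)^{\times}$. I would fix an adelic coset representative exhibiting the $t$-th component as a quotient $\Gamma_{t}\backslash\hh_{3}$, with $\Gamma_{t}$ a conjugate of the relevant $\Gamma^{*}_{F}(m,m\frkn)$-type group by an element of determinant representing $t$ (built from $\begin{pmatrix}t&0\\0&1\end{pmatrix}$ at the finite places), compute the restriction of $s_{m}$ (the action of $\begin{pmatrix}m&0\\0&1\end{pmatrix}$) to that component in terms of these representatives, and check that on the image of $Y_{\QQ,1}(m^{2}N)$ the composite $s_{m}\circ\mathrm{proj}_{t}\circ u_{a}$ conjugates the unipotent $u_{a}=\begin{pmatrix}1&-a\\0&1\end{pmatrix}$ into $\begin{pmatrix}1&ta/m\\0&1\end{pmatrix}$ modulo the relevant congruence groups, i.e.\ becomes $\kappa_{ta/m}$. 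The underlying archimedean identity is just
\[\begin{pmatrix}1&ta/m\\0&1\end{pmatrix}\begin{pmatrix}m&0\\0&1\end{pmatrix}=\begin{pmatrix}m&ta\\0&1\end{pmatrix}=\begin{pmatrix}m&0\\0&1\end{pmatrix}\begin{pmatrix}1&ta\\0&1\end{pmatrix},\]
so that the parameter $a$ gets replaced by $ta$ precisely because passing to the $t$-th component and then scaling by $\begin{pmatrix}m&0\\0&1\end{pmatrix}$ multiplies the translation parameter by $t$; one also records that conjugation by $\begin{pmatrix}m&0\\0&1\end{pmatrix}$ carries the $\Gamma_{1}(m^{2}N)$-type group into the $\Gamma^{*}_{F,1}(\frkn)$-type group, exactly as in the chain $Y_{\QQ,1}(m^{2}N)\xrightarrow{i}\YFss(m^{2}\frkn)\xrightarrow{\kappa_{a/m}}\YFss(\frkn)$ defining $\asaieisen$, and that the hypothesis that $a$ generate $\OO_{F}/(p\OO_{F}+\ZZ)$ makes all objects well defined and matches the level data. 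This is the Betti/Borel--Moore analogue of the \'etale moduli-space computation of Lei--Loeffler--Zerbes in the Asai setting, and tracking the diamond action of $t$ on Kato's Siegel units (which sends $\prescript{}{c}{g}_{0,1/N}$ to $\prescript{}{c}{g}_{0,t/N}$) is the device that makes the $t$-th component contribute with the scaled parameter.

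Granting $(s_{p^{r}})_{*}(\czeta_{p^{r},\frkn,a}(t))=\asaieisen_{p^{r},\frkn,ta}$, the ``thus'' is immediate: since $\czeta_{p^{r},\frkn,a}=\sum_{t}\czeta_{p^{r},\frkn,a}(t)$, applying $(s_{p^{r}})_{*}$ and tensoring the $t$-summand with $[t]$ gives $\sum_{t}(s_{p^{r}})_{*}(\czeta_{p^{r},\frkn,a}(t))\otimes[t]=\sum_{t}\asaieisen_{p^{r},\frkn,at}\otimes[t]$, which is $\cPhi^{r}_{\frkn,a}$ by definition (using $ta=at$). I expect the main obstacle to be precisely the component gymnastics of the middle paragraph: correctly parametrising the components of $\YFs(m,m\frkn)$, picking compatible adelic and integral representatives, and verifying that ``project to the $t$-th component, then apply $s_{m}$'' produces exactly the translate by $ta$ (with the correct sign and inverse conventions) rather than some other translate, together with the matching bookkeeping for the Siegel-unit datum. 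The elementary matrix identities are immediate; the real work is adelic, done at the level of double cosets. Since this is \cite[Proposition 4.4]{CW}, in practice one simply cites it, but the argument has the shape above.
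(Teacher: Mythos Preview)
Your proposal is correct and follows essentially the same approach as the paper (and as \cite[Proposition~4.4]{CW}): the key is exactly the connected-component bookkeeping for $\YFs(m,m\frkn)$ via the commutative diagram with $\begin{pmatrix}t&0\\0&1\end{pmatrix}$, which shows that the restriction of $u_{a}$ to the $t$-th component is conjugate to $u_{ta}$ on the identity component, after which one uses that $s_{p^{r}}$ commutes with $\begin{pmatrix}t&0\\0&1\end{pmatrix}$. The only cosmetic difference is that you first pull everything back to $\cC_{m^{2}N}$ via the preceding lemma and compare the two composite maps $s_{m}\circ\mathrm{proj}_{t}\circ u_{a}\circ\iota\circ\psi_{m}$ and $\kappa_{ta/m}\circ i$, whereas the paper's Section~5.1 works directly with $\cC_{mN}$ and unwinds $\czeta_{m,\frkn,a}(t)$ step by step; the underlying matrix identities and the substantive content are identical.
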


We will use$\czeta_{p^{r},\frkn, a}$ for interpolating twists later. 
We also fix $a\in\OO_F$ such that it generates $(\OO_F/(p\OO_{F} + \ZZ))$. 

\subsection{Higher weight Asai-Eisenstein elements}
Let $E$ be a finite extension of $\Qp$ such that $F$ embeds into $E$. Let $\OO_E$ be its ring of integers.
For $k\geq 0$ integer, recall $V_{k}(\OO_{E})= \sym^{k}((\OO_{E})^2)$, the left $\OO_E[\gl(\ZZ)]$-module of symmetric polynomials of degree $k$ in 2 variable with coefficients in $\OO_E$.
Consider $T_{k}(\OO_{E})=(V_{k}(\OO_E))^{*}$ i.e. $T_{k}(\OO_E)$ is the module of symmetric tensors of degree $k$ over $(\OO_{E})^2$. 
We then have the $\OO_{E}[\gl(\OO_{F})]$-module $V_{kk}(\OO_{E})=V_{k}(\OO_{E})\otimes (V_{k}(\OO_{E}))^{\sigma}$, where $\gl(\OO_F)$ acts on the first factor via the embedding $\OO_{F} \hookrightarrow \OO_{E}$ and on the second component via its Galois conjugate.
Let $T_{kk}(\OO_{E})=(V_{kk}(\OO_{E}))^{*}$. 
The space $T_{k}(\OO_{E})$ can be viewed as a local system of $\OO_{E}$-modules on $Y_{\QQ,1}(N)$ for any integer $N\geq 4$. 
Similarly, $T_{kk}(\OO_{E})$ gives a local system on $\YFs(U)$ and $Y_{F}(U)$ for sufficiently small $U$.

\subsubsection{Moment maps}
\label{momomaps}
The linear functional dual to the second basis vector of $\OO_{E}^2$ defines a $\GG^{*}_{F,1}(p^{t}\frkn)$-invariant linear functional on $\sym^{k}((\OO_{E}/p^{t})^{2})$ or on $(\sym^{k}((\OO_{E}/p^{t})^{2}))^{\sigma}$ and hence an invariant vector in $T_{kk}(\OO_{E}/p^{t})$. 
This can be seen as a section of the corresponding local system, defining a class 
\[e_{F,k,t} \in \mathrm{H}^{0}(\YFss(p^{t}\frkn), T_{kk}(\OO_{E}/p^t)).\]
Cup-product with $e_{f,k,t}$ defines a \emph{moment map}
\[\mom^{kk}: \varprojlim_{t} \Ht(\YFss(p^{t}\frkn),\ZZ)\otimes \OO_{E} \rightarrow \Ht(\YFss(\frkn), T_{kk}(\OO_{E})).\]
This is the Betti cohomology analogue of the moment maps in the \'etale cohomology of modular curves considered in \cite{KLZ1}.
In the next section, we will describe and use $e_{f, k, t}$ in more detail.

By Theorem \eqref{thmlw}, the family of classes $(\cPhi_{\frkn p^{t}, a}^r)_{t\geq 0}$ is compatible under pushforward, so it is a valid input to the $\mom^{kk}$ after base-extending from $\OO_{E}$ to $\OO_{E}[(\ZZ/p^{r})^\times]$. 

\begin{defi}
    We let $\cPhi_{\frkn, a}^{k,r}$ be the image of the compatible system $(\cPhi_{\frkn p^{t},a}^{r})_{t\geq 0}$ under $\mom^{kk}$.
\end{defi}

More precisely,
\[\cPhi_{\frkn,a}^{k,r} = (\mom^{kk}\otimes \mathrm{Id})((\cPhi_{\frkn p^{t}, a}^{r})_{t\geq 0})\in \Ht(\YFss(\frkn), T_{kk}(\OO_{E}))\otimes \OO_{E}[(\ZZ/p^{r})^\times], \]
and hence, we have
\[\cPhi_{\frkn, a}^{k,r} = \sum_{t\in (\ZZ/p^{r})^\times} \left(\mom^{kk}((\asaieisen_{p^r,\frkn p^{t}, a})_{t\geq 0})\right) \otimes [t].\]

Note that the action of the Hecke operator $(U_{p})_*$ is well defined both on $\Ht(\YFss(\frkn), T_{kk}(\OO_E))$ and on the $\varprojlim_{t} \Ht(\YFss(\frkn p^{t}),\Zp)$. Moreover, the map $\mom^{kk}$ commutes with $(U_{p})_{*}$. 
Hence, we have the following norm-compatibility relation, for any $k\geq 1$,
\[\pi_{r+1}(\cPhi_{\frkn, a}^{k, r+1}) = (U_{p})_{*}\cdot \cPhi_{\frkn, a}^{k,r}.\]

\subsubsection{Relation to the weight $2k$ Eisenstein series}
From \cite[Proposition 5.2]{CW}, for an integer $k\geq 0$, we know that there exists a class $\mathrm{Eis}^{k}_N \in \Ho(Y_{\QQ,1}(N), T_{k}(\QQ))$, whose image under the comparison map (comparison between the Betti cohomology and the deRham cohomology) is the class of the differential form $-N^{k}F^{k+2}_{1/N}dw^{\otimes k}d\tau$. 
Here $F^{k+2}_{1/N}$ is an Eisenstein series of weight $k+2$ that appears in \cite{Kato}.
Via the base-extension, we can consider $\mathrm{Eis}^{k}_{N} \in \Ho(Y_{\QQ,1}(N), T_{k}(\Qp))$.
This class does not generally lie in the lattice $\Ho(Y_{\QQ,1}(N), T_{k}(\Zp))$.
But for any integer $c>1$ coprime with $2, 3, N$, there exists $\ceis^{k}_{N} \in \Ho(Y_{\QQ,1}(N),T_{kj}(\Zp))$ such that
\[\ceis^{k}_{N} = (c^{2}-c^{-k}\langle c \rangle)\mathrm{Eis}^{k}_{N}\]
holds in $\Ho(Y_{\QQ,1}(N),T_{k}(\Qp))$, where $\langle c \rangle$ is the diamond operator acting on $\Ho(Y_{\QQ,1}(N), T_{k}(\Zp))$ (see \cite{Kings1}, and \cite{KLZ1} for details).
Note that, when $k=j$, we have $\prescript{}{c}{\mathrm{Eis}^{0}_{mN}}=\cC_{mN}$. 

\begin{defi}[Clebsch-Gordan map]
    For $j\in\{1,\dots, k\}$We can regard $T_{2k-2j}(\OO_{E})$ as $\mathrm{SL}_{2}(\ZZ)$-invariant submodule of the $\mathrm{SL}_{2}(\OO_F)$-module $T_{kk}(\OO_{E})$, via the \emph{Clebsch-Gordan map}
    \[\CG^{[k,k,j]} : T_{2k-2j}(\OO_{E}) \to T_{kk}(\OO_{E})\]
    normalized as is in \cite{KLZ1}.
\end{defi}

Recall the composition of maps
$$Y_{\QQ}(m,mN) \xhookrightarrow{\iota} \YFs(m,m\frkn) \xrightarrow{u_{a}} \YFs(m,m\frkn).$$
Using this map, we obtain another composition of maps 
\[(u_{a}\circ\iota)_{*}\circ \CG^{[k,k,j]} : \Ho(Y_{\QQ}(m,mN), T_{2k-2j}(\OO_{E}))\to \Ht(\YFs(m,m\frkn),T_{kk}(\OO_E)).\]

\begin{defi}[Twisted Asai-Eisenstein element]
\label{twstdasai}
    Let $\asaieisen_{m,\frkn,a}^{k,j}\in \Ht(\YFss(\frkn), T_{kk}(\OO_{E}))$ be the image of $(u_{a}\circ\iota)_{*}\circ\CG^{[k,k,j]}(\ceis^{2k-2j}_{mN})$ under the restrictions to the identity component followed by $(s_{m})_{*}$. 
    Similarly, $\Xi_{m,\frkn,a}^{k,j}$ is defined, for the analogous element with $E$-coefficients, using $\mathrm{Eis}^{2k-2j}_{mN}$.
\end{defi}

Explicitly, for $t\in (\ZZ/m\ZZ)^\times$,
\[\asaieisen^{k,j}_{m,\frkn,at} \coloneqq (s_{m})_{*}\circ(\mathrm{proj}_{t})_{*}\circ(u_{a}\circ\iota)_{*}\CG^{[k,k,j]}(\ceis^{2k-2j-2}_{mN}),\]
where $\mathrm{proj}_{t}: \YFs(m,m\frkn) \to \YFs(m,m\frkn)^{(t)}$ is the projection map. 

\begin{rem}
    Note that $\Xi^{k,j}_{p^{r},\frkn,at}\in \Ht(\YFss(\frkn), T_{kk}(E))$, and one has the following equality
    \[\Xi^{k,j}_{p^{r},\frkn,at}= p^{jr}\cdot(\kappa_{at/p^{r}})_{*}(\iota_{*}\CG^{[k,k,j]}(\mathrm{Eis}^{2k-2j}_{p^{2r}N})).\]
    This description is convenient to relate this element with special values of the Asai $L$-function.
    See \cite[Lemma 5.4]{CW} for the details.
\end{rem}

\begin{prop}{\cite[Proposition 5.5]{CW}}
For any integer $r\geq 0$, we have
\[\cPhi^{k,r}_{\frkn, a} = \sum_{t\in (\ZZ/p^r)^\times} \asaieisen_{p^{r},\frkn,at}^{k,0} \otimes [t],\]
where the equality takes place in $\Ht(\YFss(\frkn), T_{kk}(\OO_{E}))\otimes \OO_{E}[(\ZZ/p^r)^\times]$.
\end{prop}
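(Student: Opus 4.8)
The goal is to prove the final Proposition, which identifies the weight-$k$ class $\cPhi^{k,r}_{\frkn,a}$ (defined via the moment map applied to the compatible system of weight-$2$ classes) with the sum $\sum_{t} \asaieisen^{k,0}_{p^r,\frkn,at}\otimes[t]$ of twisted Asai-Eisenstein elements at twist $j=0$. Since both sides live in $\Ht(\YFss(\frkn),T_{kk}(\OO_E))\otimes\OO_E[(\ZZ/p^r)^\times]$, and both are visibly sums over $t\in(\ZZ/p^r)^\times$ of a class tensored with $[t]$, it suffices to match the $t$-components: I must show that $\mom^{kk}\big((\asaieisen_{p^r,\frkn p^s,at})_{s\geq 0}\big)$ equals $\asaieisen^{k,0}_{p^r,\frkn,at}$ for each fixed $t$. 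The plan is therefore to unwind both constructions to a common point on the modular curve side and invoke the analogous compatibility already established in the elliptic setting.

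First I would reduce to the case $t=1$ (replacing $a$ by $at$ throughout) and recall the two definitions being compared. The left-hand class is $\mom^{kk}$ of the tower $(\asaieisen_{p^r,\frkn p^s,a})_s = \big((\kappa_{a/p^r})_*\circ i_*(\cC_{p^{2r}N p^{2s}... })\big)_s$ — more precisely each level is built by pushing the Betti-Eisenstein class $\cC$ forward from $Y_{\QQ,1}$ through $i$ and $\kappa_{a/p^r}$ — while the right-hand class $\asaieisen^{k,0}_{p^r,\frkn,a}$ is, by Definition \ref{twstdasai} and the Remark following it, obtained from $\ceis^{2k}_{p^{2r}N}$ (the weight-$2k$ Eisenstein class, $j=0$) via $\CG^{[k,k,0]}$, pushforward along $u_a\circ\iota$, restriction to the identity component, and $(s_{p^r})_*$. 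The key reduction is that for $j=0$ the Clebsch–Gordan map $\CG^{[k,k,0]}\colon T_{2k}(\OO_E)\to T_{kk}(\OO_E)$ is simply the natural inclusion of $\mathrm{SL}_2(\ZZ)$-invariants, and on the modular-curve side the moment map construction of $\ceis^{2k}_N$ out of the tower $(\cC_{Np^s})_s$ is exactly the content of Kings' theory of Eisenstein classes / \cite{KLZ1}: $\mom^{2k}\big((\cC_{Np^s})_s\big) = \ceis^{2k}_N$ at weight $2k$, $j=0$.

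The main steps, in order: (1) show that the Betti moment map $\mom^{kk}$ for the Bianchi locally symmetric spaces is compatible with the weight-$2k$ moment map on the modular curve $Y_{\QQ,1}(p^{2r}N)$ under the pushforward maps $i_*$, $(u_a\circ\iota)_*$, $(s_{p^r})_*$, $(\kappa_{a/p^r})_*$ — this is a formal statement about cup product with the invariant section $e_{F,k,t}$ commuting with these pushforwards, which holds because each map is induced by an $\mathrm{SL}_2$-equivariant map of spaces carrying compatible local systems, and $e_{F,k,t}$ pulls back from $e_{\QQ,2k,t}$ along the inclusion $Y_{\QQ,1}\hookrightarrow\YFss$ composed with the Clebsch–Gordan inclusion; (2) apply this to the tower $(\cC_{p^{2r}N p^{2s}})_s$, using Proposition \ref{prop1}(1) for norm-compatibility, to get $\mom^{kk}$ of the Bianchi tower $=$ pushforward of $\mom^{2k}$ of the modular-curve tower; (3) invoke the elliptic identity $\mom^{2k}\big((\cC_{Np^s})_s\big)=\ceis^{2k}_N$ (for the $c$-modified classes, using the definition of $\ceis$ and $\prescript{}{c}{\mathrm{Eis}^0}=\cC$ from the text) to conclude the right-hand side equals $\asaieisen^{k,0}_{p^r,\frkn,a}$; (4) reassemble over $t$ and tensor with $\OO_E[(\ZZ/p^r)^\times]$, noting that all maps are the identity on the $\OO_E[(\ZZ/p^r)^\times]$-factor. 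One should also check that $\mom^{kk}$ commutes with $(\mathrm{proj}_t)_*$ and with restriction to the identity component, which is immediate since these are just (components of) identity maps of underlying spaces up to relabelling.

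The main obstacle I anticipate is step (1): carefully verifying that cup product with the Betti analogue $e_{F,k,t}$ of the invariant vector in $T_{kk}(\OO_E/p^t)$ genuinely commutes with the four pushforward maps at finite level, and that it restricts correctly along the Clebsch–Gordan inclusion $T_{2k}\hookrightarrow T_{kk}$ — i.e. that $e_{F,k,t}$ restricted to $Y_{\QQ,1}(p^{2r}N)$ is exactly the image under $\CG^{[k,k,0]}$ of the weight-$2k$ invariant section on the modular curve. The subtlety is purely in tracking the normalizations of $\CG^{[k,k,0]}$ and of the moment maps as fixed in \cite{KLZ1}; once these match, compatibility follows from functoriality of cup products and pushforwards for the maps $i$, $u_a$, $\iota$, $s_{p^r}$, $\kappa_{a/p^r}$, all of which are proper (so Borel–Moore pushforward is defined) and $\mathrm{SL}_2$-equivariant. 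This is essentially the Bianchi/Betti transcription of \cite[Proposition 5.5]{CW} (whose proof is cited) combined with \cite[Theorem 6.2.4]{KLZ1} and \cite[Theorem 8.1.4]{LLZ_asai}, so I would model the argument closely on those references rather than redevelop it from scratch.
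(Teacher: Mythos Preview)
Your proposal is correct and follows essentially the same approach as the cited reference \cite[Proposition 5.5]{CW}; note that the present paper does not reproduce the proof but simply cites that result. The key ingredients you identify --- reducing to each $t$-component, the fact that $\CG^{[k,k,0]}$ is the natural inclusion, compatibility of the moment maps with the pushforwards $i_*$, $(u_a\circ\iota)_*$, $(s_{p^r})_*$, and the elliptic identity $\mom^{2k}\big((\cC_{Np^s})_s\big)=\ceis^{2k}_N$ from Kings' theory / \cite{KLZ1} --- are exactly those used in the original argument, and the normalization check you flag in step~(1) is indeed the only point requiring care.
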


\begin{rem}
    \label{remnormcomp}
    The action of the Hecke operator $(U_{p})_*$ is well defined on $\Ht(\YFss(\frkn),T_{kk}(\OO_E))$ and on $\varprojlim_{r}\Ht(\YFss(\frkn p^{r}),\Zp)$.
    Moreover, the maps $\mom^{kk}, \mom^{(k-j)(k-j)}$ commutes with $(U_{p})_*$, and see the proof of Theorem \ref{thm_decomp} for the relation between $\mom^{(k-j)(k-j)}$ and $\CG^{[k,k,j]}$.
    Thus, like Theorem \ref{thmlw}, we have the following norm-compatibility:
for any integer $r>1$,and any integer $0\leq j \leq k$, we have
    \[\pi_{r+1}\left(\sum_{t\in (\ZZ/p^{r+1})^{\times}}\asaieisen^{k,j}_{p^r,\frkn,at}\otimes [t]\right) = (U_{p})_{*}\sum_{t\in\Zpr}\asaieisen^{k,j}_{p^{r}, \frkn, at}\otimes[t].\]
\end{rem}

\subsection{The $p$-adic Asai $L$-function: $p$-ordinary case}
\label{prevwork}
In \cite{CW}, Loeffler--Williams constructed a $p$-adic measure, that is, a $p$-adic $L$-function in the Iwasawa algebra $\Lambda_{E}(\GG)$. They proved:
\begin{thmt}{\cite[Theorem 7.5]{CW}}
    For any integer $c>1$ coprime to $6\frkn$, there exists a $p$-adic $L$-function 
    \[\asaipad(\Psi)\coloneqq \left\langle \phi^{*}_{\Psi}, \hspace{0.7em} \varprojlim_{r} (U_{p})^{-r}_{*} \mathrm{e}_{\mathrm{ord,*}} \cPhi^{k,r}_{\frkn,a} \right\rangle\in\OO_{E}[[\Zp^\times]]\]
    which satisfies the following interpolation property: for any Dirichlet character $\theta$ of conductor $p^r$, and for any integer $0 \leq j\leq k$, we have
    \[\int_{\Zp^\times}x^{j}\theta(x)d\asaipad(\Psi)(x) = \begin{cases}
        (*)\Las(\Psi,\overline{\theta},j+1) &\text{ if } (-1)^{j}\theta(-1)=1,\\
        0 &\text{ if } (-1)^{j}\theta(-1)= -1,
    \end{cases}\]
    where $(*)$ is some non-zero explicit factor.
\end{thmt}
Here $(\mathrm{e}_{\mathrm{ord},*})$ is the Hida's ordinary projector.
Note that, roughly, by King's theory of polylogarithms (\cite{Kings1}), $\asaipad(\Psi)$ is independent of the twist $j$.
Thus, integrating $x^j\theta(x)$ against $d\asaipad(\Psi)$ computes $\Las(\Psi, \overline{\theta}, j+1)$. 
In other words, we have the following equality
\begin{equation}
    \int_{\Zp^{\times}}x^{j}\theta(x) d\asaipad(\Psi)(x) = \int_{\Zp^{\times}} \theta(x) d\prescript{}{c}{L^{\mathrm{As},j}_{p}(\Psi)(x)},
\end{equation}
where 
\[\prescript{}{c}{L^{\mathrm{As},j}_{p}(\Psi)} = \varprojlim_{r}\left\langle \phi^{*}_{\Psi}, \hspace{0.6em} (U_{p})_{*}^{-r} \mathrm{e}_{\mathrm{ord},*}\sum_{t\in\Zpr} \asaieisen^{k,j}_{p^{r},\frkn, at}\otimes [t]\right\rangle.  \]

\section{Cyclotomic twists of Asai-Eisenstein elements}
\label{patchsec}
In this section, we patch cyclotomic twists of Asai-Eisenstein elements using methods analogous to those in \cite{LZ_ran}. 
We first study the hybrid locally symmetric space $\YFs(m,m\frkn)$ in detail.
After that following \cite{LLZ_ann}, \cite{LLZ_asai}, and \cite{CW}, we define higher weight zeta elements $\czeta^{[j]}_{p^{r},\frkn,a}$ which are required for the patching arguments. 
We also use $\czeta^{[j]}_{p^{r},\frkn,a}$ to obtain congruences like those in \cite{LZ_ran} as well as to define $\asaieisen^{j,j}_{p^{r},\frkn,at}$.

\subsection{On mixed level locally symmetric spaces $\YFs(m, m\frkn)$}
Recall that the locally symmetric space $\YFs(m,m\frkn)$ is not connected and has $\hat{\ZZ}^{\times}/\det(U^{*}_{F}(m, m\frkn))\cong (\ZZ/m)^\times$ connected components.
Each connected component can be identified with $\YFs(m,m\frkn)^{(1)} = \GG^{*}_{F}(m, m\frkn)\backslash\hh_3$ by the matrix $\begin{pmatrix}
    t & 0 \\ 0 & 1
\end{pmatrix},$
where $$\GG^{*}_{F}(m, m\frkn)=\left\{ \begin{pmatrix}
         a & b\\ c& d
    \end{pmatrix} \in \mathrm{SL}_{2}(\OO_{F}) : \begin{matrix}
        a \equiv 1 \mod m, & b\equiv 0 \mod m\\c \equiv 0 \mod m\frkn, & d\equiv 1 \mod m\frkn 
    \end{matrix}\right\}$$
    and $t\in (\ZZ/m)^\times$.
 In other words,
 \[\YFs(m,m\frkn)^{(1)} \xrightarrow[\cong]{\begin{pmatrix}
     t & 0 \\ 0 & 1
 \end{pmatrix}} \YFs(m,m\frkn)^{(t)}.\]

 We have the following commutative diagram for each $t\in (\ZZ/m)^\times$
\begin{equation}
\label{commdiag1}
    \begin{tikzcd}[sep=1.5cm]
\YFs(m,m\frkn)^{(1)} \ar{r}{\matt} \arrow[d, "\matjt"]
& \YFs(m,m\frkn)^{(1)}  \arrow[d, "\matjt"] \\
\YFs(m,m\frkn)^{(t)} \arrow[r,"\mata"]
& \YFs(m,m\frkn)^{(t)}
\end{tikzcd}
\end{equation} 
See \cite[Proof of Proposition 4.4]{CW}.
If we identify $\YFs(m,m\frkn)^{(t)}$ with $\GG^{*}_{F}(m,m\frkn)\backslash\hh_{3}$ via $\matjt$, the restriction to this component of the right action of $\begin{pmatrix}
    1 & -a \\ 0 & 1
\end{pmatrix}$ on the adelic symmetric space corresponds to the left action of $\begin{pmatrix}
    1 & ta \\ 0 & 1
\end{pmatrix}$ on $\hh_3$, i.e, the right action of $\begin{pmatrix}
    1 & -ta \\ 0 & 1
\end{pmatrix}$.

Thus, the right action by $\begin{pmatrix}
    1 & -a \\ 0 & 1
\end{pmatrix}$ can be decomposed (using the above commutative diagram) as 
\begin{align*}
    \begin{pmatrix}
    1 & -a \\ 0 & 1
\end{pmatrix} &= \begin{pmatrix}
    t & 0 \\ 0 & 1
\end{pmatrix}^{-1}\cdot\begin{pmatrix}
    1 & -ta \\ 0 & 1
\end{pmatrix}\cdot\begin{pmatrix}
    t & 0 \\ 0 & 1
\end{pmatrix},\\
&= \begin{pmatrix}
    \frac{1}{t} & 0 \\ 0 & 1
\end{pmatrix}\cdot\begin{pmatrix}
    1 & -ta \\ 0 & 1
\end{pmatrix}\cdot\begin{pmatrix}
    t & 0 \\ 0 & 1
\end{pmatrix}.
\end{align*}

Recall $\czeta_{m,\frkn,a} = (u_{a}\circ\iota)(\cC_{mN}) = \sum_{t\in (\ZZ/m)^\times} \czeta_{m,\frkn,a}(t)$, where $\czeta_{m,\frkn,a}(t)=(\mathrm{proj}_{t})_{*}(\czeta_{m,\frkn,a}).$
Moreover, we know
\[\asaieisen_{p^{r},\frkn,ta} = (s_{p^r})_{*}(\czeta_{p^{r},\frkn, a}(t)) = (s_{p^r})_{*}((\mathrm{proj}_{t})_{*}(\czeta_{m,\frkn,a})).\]

Lets explore the relation between $\czeta_{p^{r},\frkn,a}(t)$ and $\asaieisen_{p^{r},\frkn, ta}$ explicitly.
Using the commutative diagram, we can write
\begin{align*}
    \czeta_{m,\frkn,a}(t) &= (\mathrm{proj_{t}})_{*}(\czeta_{m,\frkn,a}),\\[1em] 
    &= (\mathrm{proj_{t}})_{*}((u_{a})_{*}\iota_{*}(\cC_{mN})), \\[1em]
    &= (u_{a})_{*}\left((\mathrm{proj}_{t})_{*}(\iota)_{*}(\cC_{mN})\right),\\[1em]
    &= \left(\begin{pmatrix}
        \frac{1}{t} & 0\\
        0 & 1
    \end{pmatrix}\cdot u_{ta} \cdot \begin{pmatrix}
        t & 0 \\
        0 & 1
    \end{pmatrix}\right)_{*}((\mathrm{proj}_{t})_{*}(\iota)_{*}(\cC_{mN})). 
\end{align*}
If we write $(\mathrm{proj}_{t})_{*}\iota_{*}(\cC_{mN})= \alpha$, then,
\begin{align*}
    \czeta_{m,\frkn,a}(t) &=\left(\begin{pmatrix}
        \frac{1}{t} & 0\\
        0 & 1
    \end{pmatrix}\cdot u_{ta} \cdot \begin{pmatrix}
        t & 0 \\
        0 & 1
    \end{pmatrix}\right)_{*}(\alpha), \\[1em]
    &= \begin{pmatrix}
        t & 0 \\0 & 1
    \end{pmatrix}_{*}(u_{ta})_{*}\begin{pmatrix}
        t^{-1} & 0 \\0 & 1
    \end{pmatrix}_{*}(\alpha),\\[1em]
    &= \begin{pmatrix}
        t & 0 \\0 & 1
    \end{pmatrix}_{*}(u_{ta})_{*}(\alpha^{(1)}),
\end{align*}
where by superscript $(t)$, we mean the element in $\YFs(p^r,p^r\frkn)^{(t)}$.

Hence,
\begin{align*}
    \asaieisen_{m,\frkn, at} &= (s_{p^r})_{*}(\czeta_{m,\frkn,a}(t)), \\[1em]
    &= (s_{p^r})_{*}\left( \begin{pmatrix}
        t & 0 \\0 & 1
    \end{pmatrix}_{*}(u_{ta})_{*}(\alpha^{(1)})\right),\\[1em]
    &= \begin{pmatrix}
        t & 0 \\ 0 & 1
    \end{pmatrix}_{*}\left((s_{p^r})_{*}(u_{ta})_{*}(\alpha^{(1)})\right), (\text{because } \begin{pmatrix}
        t & 0\\ 0 & 1 
    \end{pmatrix}\text{ and }\begin{pmatrix}
        p^{r} & 0 \\ 0 & 1
    \end{pmatrix}\text{ commutes.})\\[1em]
    &= ((s_{p^{r}})_{*}(u_{ta})_{*}(\alpha^{(1)}))^{(t)}. 
\end{align*}
Since we have defined $\czeta_{m,\frkn,a} = (u_{a})_{*}\iota_{*}(\cC_{mN})$, we get
\[\asaieisen_{m,\frkn,at}= (s_{p^{r}})_{*}((\czeta_{m,\frkn,ta}(1))(t)).\]
This description will be helpful in the following sections.

\subsection{Cyclotomic twists calculations}
Let $E/\Qp$ be a finite extension that contains the quadratic imaginary field $F/\QQ$ and $\OO_{F} \hookrightarrow \OO_{E}$.

If $e_{1}, e_{2}$ is the standard basis of $(\OO_E)^2$, then, for $i\in\{1.2\}$ let $e_{i,r}\coloneqq e_{i}  \mod p^r$. 
Then $e_{F,k,r} = e_{2,r}^{[k]} \otimes e_{2,r}^{[k]}$ 
$\in \mathrm{H}^{0}(\YFss(\frkn p^{r}), T_{kk}(\OO_{E}/p^r))$ is a section,
where $e^{[k]}$ denotes $k$-th divided power of $e$. 
Also, we have chosen $e_{2}$ such that 
$$(u_{a})_{*}(e_{2,r}^{[k]} \otimes e_{2,r}^{[k]}) = e_{2,r}^{[k]} \otimes e_{2,r}^{[k]}.$$ 
We observe that$$(u_{a})_{*}(e_{1,r}\otimes e_{1,r}) = (e_{1,r}-a e_{2,r}) \otimes (e_{1,r} - a^{\sigma} e_{2,r}).$$
Recall that we have $s_{p^r}:\YFs(p^r,p^{r}\frkn) \to \YFss(\frkn)$ given by the action of $\begin{pmatrix}
    p^{r} & 0 \\ 0 & 1
\end{pmatrix}$.
By an abuse of notation, write $T_{kk}(\OO_{E})$ for the local system $\OO_E$-modules both on $\YFss$ and $\YFs$.
Then let $$(s_{p^{r}})_{\sharp}: T_{kk}(\OO_E) \to (s_{p^{r}})^{*}(T_{kk}(\OO_E))$$ be the map on local systems (sheaves) given by the action of $\begin{pmatrix}
    p^{r} & 0 \\ 0 & 1
\end{pmatrix}$ on the representation ($\OO_{E}[\gl(\OO_F)]$-module) $T_{kk}(\OO_{E})$.
Thus $e_{1}^{[k]}\otimes e_{1}^{[k]}$ is in the kernel of $(s_{p^{r}})_{\sharp}\mod p^r$. 
In other words, if we consider the following map
$$(s_{p^r})_{\sharp}:T_{kk}(\OO_E/p^{r}) \to (s_{p^{r}})^{*}(T_{kk}(\OO_E/p^r)),$$
then $e_{1,r}^{[k]}\otimes e_{1,r}^{[k]}$ is in the kernel of $(s_{p^r})_{\sharp}$.

\begin{defi}[$\mod p^r$ moment maps]
    We define $\mod p^r$ moment maps as
    \begin{align*}
        \mom^{kk}_{p^r}: \Ht(\YFss(\frkn p^r),\ZZ)\otimes (\OO_{E}/p^r) &\to \Ht(\YFss(\frkn p^r), T_{kk}(\OO_{E}/p^r)), \\
        z &\mapsto z\cup (e_{2,r}^{[k]}\otimes e_{2,r}^{[k]}).
    \end{align*}
\end{defi}

Recall the \emph{Clebsch-Gordan map}: for $j\in\{1,\ldots,k\}$, we have
\[\CG^{[k,k,j]}:T_{2k-2j}(\OO_E) \to T_{kk}(\OO_{E}).\]
By putting $k=j$ in $\CG^{[k,k,j]}$, we get
\[\CG^{[j]}\coloneqq \CG^{[j,j,j]} : (\OO_{E})^{2} \to T_{jj}(\OO_{E}).\]

Let $x,y \in \mathrm{H}^{0}(Y_{\QQ}(p^r, p^{r}N),\Zp)\otimes (\OO_{E}/p^{r})$ be the sections of order $p^r$ such that $\iota^{*}(e_{1,r})$ and $\iota^{*}(e_{2,r})$ agree with the images of sections $x, y$ respectively under the map
\[\mathrm{H}^{0}(Y_{\QQ}(p^r, p^{r}N),\Zp)\otimes (\OO_{E}/p^{r}) \hookrightarrow \mathrm{H}^{0}(Y_{\QQ}(p^r, p^{r}N),\Zp)\otimes \iota^{*}(\OO_{E}/p^{r}).\]
See \cite[Remark 8.1.2]{LLZ_asai} for a similar situation in the Hilbert modular form setting.

\begin{rem}
    Modulo $p^r$, the $\CG^{[j]}$ is defined by the cup-product with
    \[\sum_{i=0}^{j}(-1)^{i}i!(j-i)!e_{1,r}^{[i]}e_{2,r}^{[j-i]}\otimes e_{1,r}^{[j-i]}e_{2,r}^{[i]}.\]
\end{rem}

\begin{thm}
    \label{thm_decomp}
    For any $t\in (\ZZ/p^{r})^\times$, the following diagram commutes:
\[
  \begin{tikzcd}[sep=1.5cm]
\Ho(Y_{\QQ}(p^{r},p^{r}\frkn),\Zp)\otimes(\OO_{E}/p^{r}) \arrow[r,"\cup (\iota^{*}(e_{2,r}^{[j]}))^{\otimes 2}"] \arrow[d, "\CG^{[j]}"] 
& \Ho(Y_{\QQ}(p^{r},p^{r}N), \iota^{*}(T_{jj}(\OO_{E}/p^{r}))) \arrow[d, "t^{j}j!(a-a^{\sigma})^{j}"] \\
\Ho(Y_{\QQ}(p^{r}, p^{r}N), \iota^{*}(T_{jj}(\OO_{E}/p^{r}))) \arrow[d, "\iota_{*}"]
& \Ho(Y_{\QQ}(p^{r}, p^{r}N), \iota^{*}(T_{jj}(\OO_{E}/p^{r}))) \arrow[d, "\iota_{*}"]  \\
\Ht(\YFs(p^{r}, p^{r}\frkn), T_{jj}(\OO_{E}/p^{r})) \arrow[d, "(u_{a})_{*}"]
& \Ht(\YFs(p^{r}, p^{r}\frkn), T_{jj}(\OO_{E}/p^{r})) \arrow[d, "(u_{a})_{*}"]\\
\Ht(\YFs(p^{r}, p^{r}\frkn), T_{jj}(\OO_{E}/p^{r})) \arrow[d, "(s_{p^r})_{*}\circ(\mathrm{proj}_{t})_{*}"]
& \Ht(\YFs(p^{r}, p^{r}\frkn), T_{jj}(\OO_{E}/p^{r})) \arrow[d, "(s_{p^r})_{*}\circ(\mathrm{proj}_{t})_{*}"]\\
\Ht(\YFss(\frkn), T_{jj}(\OO_{E}/p^r)) \arrow[r, "="] & \Ht(\YFss(\frkn), T_{jj}(\OO_{E}/p^r)) 
\end{tikzcd} 
\]    
\end{thm}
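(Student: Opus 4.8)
The plan is to verify commutativity of the diagram square by square, from top to bottom, reducing everything to an identity of $\mathrm{SL}_2$-representations modulo $p^r$. The diagram is built out of four horizontal arrows (identities or a fixed scalar) stacked against four vertical columns of maps, so it suffices to check that each of the four horizontal "rungs" is compatible with the vertical maps connecting them. First I would handle the top rung: the claim is that cup-product with $(\iota^{*}(e_{2,r}^{[j]}))^{\otimes 2}$, followed by $\CG^{[j]}$, equals $\CG^{[j]}$ followed by multiplication by $t^{j}j!(a-a^{\sigma})^{j}$. This is the only nontrivial square. Using the explicit formula recalled just before the statement, $\CG^{[j]}$ modulo $p^r$ is cup-product with $\sum_{i=0}^{j}(-1)^{i}i!(j-i)!\,e_{1,r}^{[i]}e_{2,r}^{[j-i]}\otimes e_{1,r}^{[j-i]}e_{2,r}^{[i]}$. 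The point is to chase the sections $x,y$ (with $\iota^{*}(e_{1,r})\leftrightarrow x$, $\iota^{*}(e_{2,r})\leftrightarrow y$) through both composites and observe that only the $i=0$ term of $\CG^{[j]}$ survives after further cupping with $(\iota^{*}(e_{2,r}^{[j]}))^{\otimes 2}=y^{[j]}\otimes y^{[j]}$, because $e_{1,r}^{[i]}e_{2,r}^{[j-i]}\cup e_{2,r}^{[j]}=0$ for $i>0$ in $\mathrm{TSym}$ modulo $p^r$ (the relevant binomial/divided-power coefficients vanish, or more precisely the product of divided powers of a single vector beyond total degree is forced to zero by the structure of $T_{jj}(\OO_E/p^r)$). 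The surviving coefficient is $0!\,j!=j!$, and the appearance of $t^{j}$ comes from the twist matrix $\begin{pmatrix} t & 0 \\ 0 & 1\end{pmatrix}$ acting on the second basis vector, while $(a-a^{\sigma})^{j}$ is exactly the factor produced by $(u_a)_*$ acting on $e_{1,r}\otimes e_{1,r}$ as recalled above, combined over the two tensor factors.

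Next I would observe that the three lower squares commute for formal functoriality reasons: the vertical maps $\iota_{*}$, $(u_a)_{*}$, and $(s_{p^r})_{*}\circ(\mathrm{proj}_t)_{*}$ are applied identically on both sides, while the horizontal arrows in those three rows are either the scalar $t^{j}j!(a-a^{\sigma})^{j}$ (which commutes with any $\OO_E/p^r$-linear pushforward) or the identity map. So after the top square is settled, the remaining commutativity is automatic. I would phrase this cleanly by noting that the left and right columns below the first row are literally the same composite of maps, so the only content is the first row.

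The main obstacle I anticipate is bookkeeping in the top square: getting the precise constant $t^{j}j!(a-a^{\sigma})^{j}$ — not merely commutativity up to an unspecified unit — requires care about (i) the normalization of $\CG^{[j]}$ borrowed from \cite{KLZ1}, (ii) which basis vector the diamond/twist matrix $\begin{pmatrix} t & 0 \\ 0 & 1\end{pmatrix}$ scales, under the left-vs-right action conventions for $V_n^{(\ell)}$ versus $V_n^{(r)}$ set up in Section \ref{bmfstuff}, and (iii) the interaction of $(u_a)_{*}$ with divided powers, since $(u_a)_{*}(e_{1,r}\otimes e_{1,r})=(e_{1,r}-ae_{2,r})\otimes(e_{1,r}-a^{\sigma}e_{2,r})$ must be raised to the appropriate divided power and only the cross-term pairing against $e_{2,r}^{[j]}\otimes e_{2,r}^{[j]}$ contributes $(-a)^{?}(-a^{\sigma})^{?}$ with the right multiplicities. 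This is exactly the Betti-cohomology transcription of \cite[Theorem 6.2.4]{KLZ1} and \cite[Theorem 8.1.4]{LLZ_asai}, so I would follow those computations closely, carrying the extra twist variable $t$ through, and cite the analogous étale statements for the parts of the divided-power combinatorics that are identical.
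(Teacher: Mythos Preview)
Your decomposition into ``squares'' misreads the diagram. There are no horizontal arrows in rows 2--4, so the diagram is a single outer rectangle, not a grid of commuting squares. In particular there is \emph{no} claim that the ``top square'' commutes on its own: the two sections $e_{2,r}^{[j]}\otimes e_{2,r}^{[j]}$ and $\mathcal{CG}^{[j]}_{r}=\sum_{i}(-1)^{i}i!(j-i)!\,e_{1,r}^{[i]}e_{2,r}^{[j-i]}\otimes e_{1,r}^{[j-i]}e_{2,r}^{[i]}$ do \emph{not} differ by the scalar $t^{j}j!(a-a^{\sigma})^{j}$ at the level of $\Ho(Y_{\QQ}(p^{r},p^{r}N),\iota^{*}T_{jj})$. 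Your heuristic that ``only the $i=0$ term survives after cupping with $e_{2,r}^{[j]}\otimes e_{2,r}^{[j]}$'' is not what happens: the left column never cups with $e_{2,r}^{[j]}\otimes e_{2,r}^{[j]}$ at all.

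The actual mechanism, which you gesture at but place in the wrong part of the argument, is that the scalar only materializes after pushing through \emph{all} of the vertical maps. One follows $z\cup \iota^{*}\mathcal{CG}^{[j]}_{r}$ down the left column: using the component description of $(u_{a})_{*}\circ(\mathrm{proj}_{t})_{*}$, this becomes $\begin{pmatrix}t&0\\0&1\end{pmatrix}_{*}(u_{ta})_{*}$ applied on the identity component, and $(u_{ta})_{*}$ sends $e_{1,r}\mapsto e_{1,r}-ta\,e_{2,r}$ (resp.\ $e_{1,r}-ta^{\sigma}e_{2,r}$) in the two tensor factors. The crucial step is then $(s_{p^{r}})_{*}$: since $e_{1,r}$ lies in $\ker((s_{p^{r}})_{\sharp}\bmod p^{r})$, every term containing $e_{1,r}$ dies, and the surviving pure-$e_{2,r}$ contribution is
\[
\sum_{i=0}^{j}(-1)^{i}i!(j-i)!(-ta)^{i}(-ta^{\sigma})^{j-i}\tbinom{j}{i}^{2}\,e_{2,r}^{[j]}\otimes e_{2,r}^{[j]}
= j!\sum_{i}\tbinom{j}{i}(ta)^{i}(-ta^{\sigma})^{j-i}\,e_{2,r}^{[j]}\otimes e_{2,r}^{[j]}
= t^{j}j!(a-a^{\sigma})^{j}\,e_{2,r}^{[j]}\otimes e_{2,r}^{[j]},
\]
which is exactly the right column applied to $z$. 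So the three ``lower squares'' are not merely formal: $(s_{p^{r}})_{*}$ is doing the real work of collapsing $\CG^{[j]}$ onto the single term $e_{2,r}^{[j]}\otimes e_{2,r}^{[j]}$, and without it the two columns genuinely differ. Rewrite the argument as a single chase down the left column, invoking the kernel of $(s_{p^{r}})_{\sharp}$ at the last step.
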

\begin{proof}
    The proof is similar to the proofs of \cite[Theorem 6.2.4]{KLZ1} and \cite[Theorem 8.1.4]{LLZ_asai} in the \'etale cohomology setting. For the convenience of readers, we will prove this theorem. 

    We will use a following fact about divided powers:
    \[(A+B)^{[m]} = \sum_{k=0}^{m}A^{[k]}B^{[m-k]}.\]

    For $z\in \Ho(Y_{\QQ}(p^{r}, p^{r}N), \Zp)\otimes (\OO_{E}/p^{r})$, we have
    \begin{align*}
        (\mathrm{proj}_{t})_{*}(u_{a}\circ\iota)_{*} \CG^{[j]}(z)&= (\mathrm{proj}_{t})_{*}\left((u_{a}\circ\iota)_{*}\left(z \cup \iota^{*}\left(\sum_{i=0}^{j} (-1)^{i} (j-i)! e_{1,r}^{[i]}e_{2,r}^{[j-i]} \otimes e_{1,r}^{[j-i]}e_{2,r}^{[i]}\right))\right)\right), \\[1em]
        &= (u_{a})_{*}\left((\mathrm{proj}_{t})_{*}\left(\iota_{*}(z) \cup \sum_{i=0}^{j} (-1)^{i} (j-i)! e_{1,r}^{[i]}e_{2,r}^{[j-i]} \otimes e_{1,r}^{[j-i]}e_{2,r}^{[i]}\right) \right).
    \end{align*}
Now as in the proof of \cite[Proposition 4.4]{CW} and by the calculations involving $\czeta$, if we write $\mathcal{Z}^{[j]}(t)= (\mathrm{proj}_{t})_{*}\left(\iota_{*}(z) \cup \sum_{i=0}^{j} (-1)^{i} (j-i)! e_{1,r}^{[i]}e_{2,r}^{[j-i]} \otimes e_{1,r}^{[j-i]}e_{2,r}^{[i]}\right) \in \Ht(\YFs(p^{r},p^{r}\frkn)^{(t)}, T_{jj}(\OO_{E}/p^r))$, then
\begin{align*}
(u_{a})_{*}(\mathcal{Z}^{[j]}(t)) &= \left(\begin{pmatrix}
        t & 0 \\ 0 & 1
    \end{pmatrix}_{*}(u_{ta})_{*}\begin{pmatrix}
        t^{-1} & 0 \\ 0 & 1
    \end{pmatrix}_{*}\right)(\mathcal{Z}^{[j]}(t)),\\[1em]
    &= \begin{pmatrix}
        t & 0 \\ 0 & 1
    \end{pmatrix}_{*} (u_{ta})_{*}(\mathcal{Z}^{[j]}(1)). \text{ (naively) }
\end{align*}
    Thus,
    \begin{align*}
        &(u_{ta})_{*}\left(\iota_{*}(z)(1) \cup 
        \left(\sum_{i=0}^{j} (-1)^{i} (j-i)! e_{1,r}^{[i]}e_{2,r}^{[j-i]} \otimes e_{1,r}^{[j-i]}e_{2,r}^{[i]} \right)\right) \\
        &= (u_{ta})_{*}((\iota_{*}(z)(1))) \cup \left(\sum_{i=0}^{j} (e_{1,r}-(ta) e_{2,r})^{[i]} e_{2,r}^{[j-i]} \otimes (e_{1,r}-(ta)^{\sigma} e_{2,r})^{[j-i]} e_{2,r}^{[i]}\right)
    \end{align*}
    
    Therefore, 
    \begin{align*}
        & (s_{p^{r}})_{*}(u_{a})_{*}(\mathcal{Z}^{[j]}(t))\\[1em]
        & = \begin{pmatrix}
            t & 0 \\ 0 & 1
        \end{pmatrix}_{*}(s_{p^r})_{*}\left((u_{ta})_{*}(\iota_{*}(z)(1)) \cup t^{j} j! (a-a^{\sigma})^{j} (e_{2,r}^{[j]}\otimes e_{2,r}^{[j]}) + \text{ sum involving }e_{1,r}\right), \\[1em]
        &=\begin{pmatrix}
            t & 0 \\ 0 & 1
        \end{pmatrix}_{*}\left(((s_{p^r})_{*}(u_{ta})_{*}(\iota_{*}(z)(1))) \cup \left(t^{j} j! (a-a^{\sigma})^{j} (e_{2,r}^{[j]}\otimes e_{2,r}^{[j]})\right)\right),
    \end{align*}
    since 
     $e_{1,r}^{[j]}\otimes e_{1,r}^{[j]}$ is in the kernel of $(s_{p^r})_{\sharp}$ and $e_{2,r}^{[j]}\otimes e_{2,r}^{[j]}$ is invariant under $(s_{p^r})_\sharp$.

    Now, if we chase the diagram on the right-hand side, we get the same equation.
\end{proof}

For $0\leq j\leq k$ and $Y\in\{\YFss(\frkn),\YFs(m,m\frkn)\}$, we define $\mom^{(k-j)(k-j)}$ (similar to $\mom^{kk}$)
\begin{align*}
    \mom^{(k-j)(k-j)}: \Ht(Y,T_{jj}(\OO_{E})) \to \Ht(Y,T_{kk}(\OO_{E})),
\end{align*}
such that modulo $p^r$, $\mom^{(k-j)(k-j)}$ is defined by the cup product with the element $e_{2,r}^{[k-j]}\otimes e_{2,r}^{[k-j]}$.
\begin{lem}
    \label{lemmom}
    For all $0\leq j \leq k$, we have
    \[\mom^{(k-j)(k-j)}_{p^r}\circ\mom^{jj}_{p^r} = {k \choose j}^{2} \mom^{kk}_{p^r}.\]
\end{lem}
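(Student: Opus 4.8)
The plan is to unwind both moment maps into cup products with explicit $\mathrm{H}^0$-sections and then combine them using associativity of the cup product together with the graded--algebra (divided-power) structure on $\mathrm{TSym}$.

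First I would recall that $T_n(\OO_E/p^r) = \mathrm{TSym}^n\big((\OO_E/p^r)^2\big)$ is the degree-$n$ piece of a graded $\OO_E/p^r$-algebra in which the divided powers of a fixed vector multiply by $x^{[a]}\cdot x^{[b]} = \binom{a+b}{a}\, x^{[a+b]}$; tensoring with the $\sigma$-twisted copy, the modules $T_{jj}$, $T_{(k-j)(k-j)}$, $T_{kk}$ are multiplied factor-by-factor, which is precisely the coefficient map $T_{jj}\otimes T_{(k-j)(k-j)}\to T_{kk}$ built into the definition of $\mom^{(k-j)(k-j)}_{p^r}$. By construction $\mom^{jj}_{p^r}$ is cup product with the invariant section $e_{2,r}^{[j]}\otimes e_{2,r}^{[j]}\in \mathrm{H}^0\big(\YFss(\frkn p^r),\,T_{jj}(\OO_E/p^r)\big)$, and $\mom^{(k-j)(k-j)}_{p^r}$ is cup product with $e_{2,r}^{[k-j]}\otimes e_{2,r}^{[k-j]}$ followed by this factor-by-factor multiplication.

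Then, for $z\in\Ht(\YFss(\frkn p^r),\ZZ)\otimes(\OO_E/p^r)$, associativity of the cup product gives
\[
  \mom^{(k-j)(k-j)}_{p^r}\!\big(\mom^{jj}_{p^r}(z)\big)
  = z \cup \Big(\big(e_{2,r}^{[j]}\otimes e_{2,r}^{[j]}\big)\cdot\big(e_{2,r}^{[k-j]}\otimes e_{2,r}^{[k-j]}\big)\Big),
\]
the product on the right being taken factor-by-factor. Since $e_{2,r}^{[j]}\cdot e_{2,r}^{[k-j]} = \binom{k}{j}\, e_{2,r}^{[k]}$ in $T_k(\OO_E/p^r)$ and likewise in the $\sigma$-factor, the bracketed section equals $\binom{k}{j}^2\,\big(e_{2,r}^{[k]}\otimes e_{2,r}^{[k]}\big)$, i.e. $\binom{k}{j}^2$ times the section defining $\mom^{kk}_{p^r}$; the identity follows.

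The main obstacle is the bookkeeping in the second paragraph: one must check carefully that the composite of the two moment maps genuinely equals cup product with the \emph{product} of the two sections, which amounts to verifying that the coefficient map used to define $\mom^{(k-j)(k-j)}_{p^r}$ is the factor-by-factor divided-power multiplication $T_{jj}\otimes T_{(k-j)(k-j)}\to T_{kk}$, and that this is compatible with cup-product pairings on Betti cohomology. Once this is settled, the factor $\binom{k}{j}^2$ is forced by the single divided-power relation $x^{[j]}x^{[k-j]} = \binom{k}{j}x^{[k]}$ applied in each of the two tensor slots, and the rest is standard functoriality of cup products with $\mathrm{H}^0$-classes.
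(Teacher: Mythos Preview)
Your argument is correct and is exactly the computation underlying the references the paper cites: in the paper the proof of this lemma is simply a citation to \cite[Lemma 8.2.1]{LLZ_asai} and \cite[Lemma 6.3.2]{KLZ1}, and the proofs there are precisely the divided-power identity $e_{2,r}^{[j]}\cdot e_{2,r}^{[k-j]}=\binom{k}{j}e_{2,r}^{[k]}$ applied in each tensor factor, combined with associativity of the cup product. So you have supplied the details that the paper omits, by the same route.
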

\begin{proof}
    This is proved in \cite[Lemma 8.2.1]{LLZ_asai}.
    See also \cite[Lemma 6.3.2]{KLZ1}.
\end{proof}

\begin{thm}
    \label{polynomial}
    For $r\geq 1$ and any $t\in (\ZZ/p^r)^\times$, we have, as classes in $\Ht(\YFss(\frkn), T_{kk}(\OO_{E}/p^r))$ (i.e. equality $\mod p^r$), 
    \[\asaieisen^{k,j}_{p^{r},\frkn, at} = t^{j}(a-a^{\sigma})^{j} j! {k\choose j}^{2}\asaieisen^{k,0}_{p^{r},\frkn,at}.\]
\end{thm}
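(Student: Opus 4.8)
The plan is to first prove the identity in the extreme case $k=j$ with the help of the commutative diagram of Theorem \ref{thm_decomp}, and then to deduce the general case by applying the moment map $\mom^{(k-j)(k-j)}$ and invoking Lemma \ref{lemmom}.

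\emph{Step 1: the case $k=j$.} I would feed the class $z=\cC_{p^{r}N}$, pulled back from $Y_{\QQ,1}(p^{r}N)$ to $Y_{\QQ}(p^{r},p^{r}N)$ and reduced modulo $p^{r}$, into the diagram of Theorem \ref{thm_decomp}. Traversing the left-hand column and using $\ceis^{0}_{p^{r}N}=\cC_{p^{r}N}$, $\CG^{[j]}=\CG^{[j,j,j]}$ and $(u_{a}\circ\iota)_{*}=(u_{a})_{*}\circ\iota_{*}$, one reads off precisely the class $\asaieisen^{j,j}_{p^{r},\frkn,at}$ of Definition \ref{twstdasai} (mod $p^{r}$). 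Traversing the right-hand column gives $t^{j}\,j!\,(a-a^{\sigma})^{j}$ times $(s_{p^{r}})_{*}(\mathrm{proj}_{t})_{*}(u_{a})_{*}\iota_{*}\big(\cC_{p^{r}N}\cup(\iota^{*}(e_{2,r}^{[j]}))^{\otimes 2}\big)$. By the projection formula, together with the facts used in the proof of Theorem \ref{thm_decomp} that $e_{2,r}^{[j]}\otimes e_{2,r}^{[j]}$ is fixed by the sheaf-level maps attached to $u_{a}$ and $s_{p^{r}}$ and restricts along $\iota$ to $(\iota^{*}(e_{2,r}^{[j]}))^{\otimes 2}$, this term can be moved outside the pushforwards and identified with $\asaieisen_{p^{r},\frkn,at}\cup(e_{2,r}^{[j]}\otimes e_{2,r}^{[j]})=\mom^{jj}_{p^{r}}(\asaieisen_{p^{r},\frkn,at})$, which modulo $p^{r}$ equals $\asaieisen^{j,0}_{p^{r},\frkn,at}$ by \cite[Proposition 5.5]{CW}. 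Hence $\asaieisen^{j,j}_{p^{r},\frkn,at}\equiv t^{j}(a-a^{\sigma})^{j}\,j!\,\asaieisen^{j,0}_{p^{r},\frkn,at}\pmod{p^{r}}$.

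\emph{Step 2: general $k\ge j$.} Applying $\mom^{(k-j)(k-j)}_{p^{r}}$ to the congruence of Step 1, the right-hand side becomes $t^{j}(a-a^{\sigma})^{j}\,j!\cdot\mom^{(k-j)(k-j)}_{p^{r}}(\asaieisen^{j,0}_{p^{r},\frkn,at})$, and Lemma \ref{lemmom} applied to the weight-two class $\asaieisen_{p^{r},\frkn,at}$ gives $\mom^{(k-j)(k-j)}_{p^{r}}(\asaieisen^{j,0}_{p^{r},\frkn,at})=\mom^{(k-j)(k-j)}_{p^{r}}\mom^{jj}_{p^{r}}(\asaieisen_{p^{r},\frkn,at})={k\choose j}^{2}\mom^{kk}_{p^{r}}(\asaieisen_{p^{r},\frkn,at})\equiv{k\choose j}^{2}\asaieisen^{k,0}_{p^{r},\frkn,at}\pmod{p^{r}}$ (again by \cite[Proposition 5.5]{CW}). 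Granting the compatibility $\mom^{(k-j)(k-j)}_{p^{r}}(\asaieisen^{j,j}_{p^{r},\frkn,at})\equiv\asaieisen^{k,j}_{p^{r},\frkn,at}\pmod{p^{r}}$ for the left-hand side, the two computations combine into $\asaieisen^{k,j}_{p^{r},\frkn,at}\equiv t^{j}(a-a^{\sigma})^{j}\,j!\,{k\choose j}^{2}\,\asaieisen^{k,0}_{p^{r},\frkn,at}\pmod{p^{r}}$, as desired.

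The main obstacle is that last compatibility $\mom^{(k-j)(k-j)}_{p^{r}}(\asaieisen^{j,j}_{p^{r},\frkn,at})\equiv\asaieisen^{k,j}_{p^{r},\frkn,at}$, since $\asaieisen^{j,j}$ is built from the weight-zero Eisenstein class $\cC_{p^{r}N}$ via $\CG^{[j,j,j]}$ while $\asaieisen^{k,j}$ is built from the weight-$(2k-2j)$ Eisenstein class $\ceis^{2k-2j}_{p^{r}N}$ via $\CG^{[k,k,j]}$. Establishing it requires: (i) the fact, built into the construction of the Eisenstein classes (see \cite{Kings1}, \cite{KLZ1}), that modulo $p^{r}$ the class $\ceis^{2k-2j}_{p^{r}N}$ is the image of $\cC_{p^{r}N}$ under the modular-curve moment map $\mom^{2k-2j}_{p^{r}}$; (ii) the purely representation-theoretic identity $\CG^{[k,k,j]}\circ\mom^{2k-2j}_{p^{r}}=\mom^{(k-j)(k-j)}_{p^{r}}\circ\CG^{[j,j,j]}$ of maps of (mod-$p^{r}$) coefficient sheaves, the Betti counterpart of the divided-power computations in \cite[Section 6.3]{KLZ1} and \cite[Section 8.2]{LLZ_asai}; and (iii) the commutation of $\mom^{(k-j)(k-j)}$ with $\iota_{*}$, $(u_{a})_{*}$, $(\mathrm{proj}_{t})_{*}$ and $(s_{p^{r}})_{*}$, which is the projection-formula argument of Step 1. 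Throughout, the genuinely delicate point is the bookkeeping of $p$-power levels: the mod-$p^{r}$ moment sections $e_{F,\bullet,r}$ are defined at level $\frkn p^{r}$ rather than $\frkn$, so, as in \cite{CW}, all of the above is best carried out at level $\frkn p^{r}$ and then pushed forward to level $\frkn$.
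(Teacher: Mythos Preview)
Your proposal is correct and follows essentially the same strategy as the paper's proof: first establish the $k=j$ case via Theorem~\ref{thm_decomp}, then apply $\mom^{(k-j)(k-j)}_{p^{r}}$ together with Lemma~\ref{lemmom}, and finally use the sheaf-level identity $\CG^{[k,k,j]}\circ\mom^{2k-2j}_{p^{r}}=\mom^{(k-j)(k-j)}_{p^{r}}\circ\CG^{[j]}$ to identify $\mom^{(k-j)(k-j)}_{p^{r}}(\asaieisen^{j,j}_{p^{r},\frkn,at})$ with $\asaieisen^{k,j}_{p^{r},\frkn,at}$. The paper dispatches your ``main obstacle'' simply by citing \cite[Proposition~5.1.2]{KLZ1} for the commutative square of coefficient maps, which is exactly your point~(ii); your points~(i) and~(iii) are implicit in how the paper writes $\asaieisen^{k,j}$ in terms of $\ceis^{2k-2j}$ and $\CG^{[k,k,j]}$, so you have in fact unpacked the argument a bit more carefully than the paper does.
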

\begin{proof}
    This is \cite[Proposition 5.6]{CW}. 
    We will prove this for the convenience of the readers.
    We know that for any $t\in \Zpr$, by definition,
    \[\asaieisen^{j,j}_{p^{r},\frkn, at} = (s_{p^r})_{*}(\mathrm{proj}_{t})_{*} (u_{a}\circ\iota)_{*}\CG^{[j]}(\cC_{p^{r}N}).  \]

    On the other hand, from \cite[Proposition 5.5]{CW}, we have
    \[\asaieisen^{j,0}_{p^{r},\frkn,at} = \mom^{jj}(\asaieisen_{p^{r},\frkn,at}).\]
    
    Now, modulo $p^r$, $\mom^{jj}_{p^r}$ is the map defined by the cup product with the element $e_{2,r}^{[j]}\otimes e_{2,r}^{[j]}$. 
    Hence Theorem \ref{thm_decomp} implies
    \begin{equation}
    \label{3.1}
        \asaieisen^{j,j}_{p^{r},\frkn,at} = t^{j}(a-a^{\sigma})^{j} j! \asaieisen^{j, 0}_{p^{r}, \frkn,at},
    \end{equation}
    for any $t\in (\ZZ/p^{r})^\times$.

    Hence, applying Lemma \ref{lemmom} to \eqref{3.1}, we get
    \begin{align*}
        \mom^{(k-j)(k-j)}_{p^{r}}(\asaieisen^{j, j}_{p^{r},\frkn,at}) &= t^{j}(a-a^{\sigma})^{j}j!\mom^{(k-j)(k-j)}_{p^r}(\mom^{jj}_{p^r})(\asaieisen_{p^{r},\frkn, at}),\\[1em]
        &= t^{j}(a-a^{\sigma})^{j}j! {k\choose j}^{2}\mom^{kk}_{p^r}(\asaieisen_{p^{r},\frkn,at}),\\[1em]
        &=t^{j}(a-a^{\sigma})^{j}j! {k\choose j}^{2}\asaieisen^{k,0}_{p^{r},\frkn,at}.
    \end{align*}

    Now using the following commutative diagram (for the local systems on $\YFs$ or modules, see \cite[Proposition 5.1.2]{KLZ1} for more details),
    \begin{equation*}
        \begin{tikzcd}[sep=1.5cm]
            (\OO_{E}/p^{r})^{2} \arrow[r, "\CG^{[j]}"] \arrow[d, "\mom^{2k-2j}_{p^{r}}"]
            & T_{jj}(\OO_{E}/p^r) \arrow[d, "\mom^{(k-j)(k-j)}_{p^{r}}"] \\
            T_{2k-2j}(\OO_{E}/p^{r}) \arrow[r, "\CG^{[k,k,j]}"] 
            &T_{kk}(\OO_{E}/p^{r})
        \end{tikzcd}
    \end{equation*}
    we can conclude
    \[\mom^{(k-j)(k-j)}_{p^r}(\asaieisen^{j,j}_{p^{r},\frkn,at}) = \asaieisen^{k,j}_{p^{r},\frkn, at}.\]
    This completes the proof.
\end{proof}

\subsection{Patching arguments}
For $j>0$, like $\czeta_{p^r, \frkn, a}$, we define
\begin{equation*}
\czeta^{[j]}_{p^{r},\frkn,a} = (u_{a}\circ\iota)_{*}\mathrm{CG}^{[j]}(\cC_{p^{r}N}) \in \Ht(\YFs(p^r, p^{r}\frkn), T_{jj}(\OO_{E})).    
\end{equation*}
Therefore, for any $t \in \Zpr$, we get
\begin{equation}
    \czeta^{[j]}_{p^{r},\frkn,a}(t) =  (\mathrm{proj}_{t})_{*}(u_{a}\circ\iota)_{*}\CG^{[j]}(\cC_{p^{r}N}) \in\Ht(\YFs(p^r, p^{r}\frkn)^{(t)}, T_{jj}(\OO_{E})),
\end{equation}
and\begin{equation}
    \czeta^{[j]}_{p^{r},\frkn,a} = \sum_{t \in \Zpr}\czeta^{[j]}_{p^{r},\frkn,a}(t).
\end{equation}
Note that $\CG^{[j]}:(\OO_E)^2 \to T_{jj}(\OO_{E})$ is defined by the cup product with an element $\mathcal{CG}^{[j]}$ such that $\mathcal{CG}\mod p^{r}$ is $e_{1,r}\otimes e_{2,r} - e_{2,r}\otimes e_{1,r}$. 
Let us denote $e_{1,r}\otimes e_{2,r} - e_{2,r}\otimes e_{1,r}$ by $\mathcal{CG}_{r}$. 
\begin{thm}
    \label{patch1}
    For $a\in \OO_{F}/(p\OO_{F}+\ZZ)$, $j\in \ZZ_{>0}$ and any $t\in (\ZZ/p^{r})^\times$, we have the following equality modulo $p^{jr}$
    \begin{align*}
        \label{patch1eq}
        &\sum_{i=0}^{j} t^{(j-i)}(a-a^{\sigma})^{(j-i)}(j-i)!\mom^{(j-i)(j-i)} \mathrm{Res}_{p^{r}}^{p^{jr}}(\czeta^{[i]}_{p^{r},\frkn, a}(t))\\ &= (\mathrm{proj}_{t})_{*}\left(\mathrm{Res}_{p^r}^{p^{jr}}(u_{a}\circ\iota)_{*}(\cC_{p^{r}N})\cup(u_{ta})_{*}\left( (t(a-(a)^{\sigma})e_{2,jr}\otimes e_{2,jr} + \mathcal{CG}_{jr})^{[j]}\right)\right)\\
         &= \mathrm{Res}_{p^{r}}^{p^{jr}}(\czeta_{p^{r},\frkn,a}(t))\cup(u_{ta})_{*} \left( (t(a-(a)^{\sigma})e_{2,jr}\otimes e_{2,jr} + \mathcal{CG}_{jr})^{[j]}\right).
    \end{align*}
    Here, the map $\mathrm{Res}_{p^r}^{p^{jr}}$ is the pullback along (or induced) by natural projection $\YFs(p^{jr}, p^{jr}\frkn) \to \YFs(p^{r},p^{r}\frkn)$.
\end{thm}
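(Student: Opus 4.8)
The plan is to read the whole identity modulo $p^{jr}$ throughout, i.e.\ as an equality in $\Ht(\YFs(p^{jr},p^{jr}\frkn)^{(t)},T_{jj}(\OO_E/p^{jr}))$, reducing every local system and every class mod $p^{jr}$ at the outset; with that done, the statement is the transcription, into the mixed-level Betti setting, of the computations \cite[Theorem 6.2.4]{KLZ1}, \cite[Theorem 8.1.4]{LLZ_asai} and \cite[Proposition 5.6]{CW}. The statement asserts equality of three expressions, and I would prove the two halves separately. The equality of the middle and last expressions is the soft one: by definition $\czeta_{p^{r},\frkn,a}(t)=(\mathrm{proj}_t)_*(u_{a}\circ\iota)_*(\cC_{p^{r}N})$, the map $\mathrm{Res}_{p^{r}}^{p^{jr}}$ is a pullback so it commutes with the component-projection $(\mathrm{proj}_t)_*$, and the coefficient class $(u_{ta})_*\big((t(a-a^{\sigma})e_{2,jr}\otimes e_{2,jr}+\mathcal{CG}_{jr})^{[j]}\big)$ lies in degree $0$; the projection formula $(\mathrm{proj}_t)_*(x)\cup s=(\mathrm{proj}_t)_*(x\cup\mathrm{proj}_t^{*}s)$ then converts one side into the other.

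The equality of the first and last expressions is the combinatorial core, and I would set it up as a single diagram chase of the type in Theorem~\ref{thm_decomp}. First I would assemble the mod-$p^{jr}$ ingredients: (a) the Clebsch-Gordan map $\CG^{[i]}$ is cup-product with the divided power $\mathcal{CG}_{jr}^{[i]}$ of $\mathcal{CG}_{jr}=e_{1,jr}\otimes e_{2,jr}-e_{2,jr}\otimes e_{1,jr}$; (b) the moment map $\mom^{(j-i)(j-i)}$ is cup-product with $e_{2,jr}^{[j-i]}\otimes e_{2,jr}^{[j-i]}$; (c) the operator $(u_{ta})_*$ fixes $e_{2,jr}$ on both tensor factors and sends $e_{1,jr}$ to $e_{1,jr}-ta\,e_{2,jr}$, resp.\ $e_{1,jr}-(ta)^{\sigma}e_{2,jr}$, on the first, resp.\ second, factor; (d) the divided-power identities $(A+B)^{[j]}=\sum_{i=0}^{j}A^{[j-i]}B^{[i]}$, $(cX)^{[m]}=c^{m}X^{[m]}$ and $(e_{2,jr}\otimes e_{2,jr})^{[m]}=m!\,(e_{2,jr}^{[m]}\otimes e_{2,jr}^{[m]})$ in the divided-power algebra $\bigoplus_{\ell}T_{\ell\ell}(\OO_E/p^{jr})$; (e) the component decomposition of $\YFs(p^{jr},p^{jr}\frkn)$ from Section~\ref{patchsec} — the commutative square \eqref{commdiag1} and the consequent way of rewriting $(\mathrm{proj}_t)_*\circ(u_{a})_*$ in terms of the scaling matrices $\matjt^{\pm1}$ and $(u_{ta})_*$, as in \cite[Proof of Proposition 4.4]{CW}. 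Using (d) I would expand
\[(t(a-a^{\sigma})(e_{2,jr}\otimes e_{2,jr})+\mathcal{CG}_{jr})^{[j]}=\sum_{i=0}^{j}t^{j-i}(a-a^{\sigma})^{j-i}(j-i)!\,(e_{2,jr}^{[j-i]}\otimes e_{2,jr}^{[j-i]})\cdot\mathcal{CG}_{jr}^{[i]},\]
cup each summand against $\mathrm{Res}_{p^{r}}^{p^{jr}}(\czeta_{p^{r},\frkn,a}(t))$, and then use (a)--(c), (e) to identify the $i$-th term with $t^{j-i}(a-a^{\sigma})^{j-i}(j-i)!\,\mom^{(j-i)(j-i)}\mathrm{Res}_{p^{r}}^{p^{jr}}(\czeta^{[i]}_{p^{r},\frkn,a}(t))$: one pulls the cup products past $(u_{ta})_*$, legitimate because $u_{ta}$ is a self-map of the space and hence induces a ring (in particular divided-power) endomorphism on cohomology, and uses that $(u_{ta})_*$ fixes the class $e_{2,jr}^{[j-i]}\otimes e_{2,jr}^{[j-i]}$ of (b) while the matrices $\matjt^{\pm1}$ in (e) rescale $\mathcal{CG}_{jr}$ by a power of $t$; the powers of $t$ that accumulate are exactly those in the displayed expansion. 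Summing over $i$ reproduces the first expression.

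I expect step (e) and its deployment to be the main obstacle, and it is one of bookkeeping rather than ideas: one must track which component each class lives on, how $\matjt$ and $u_{ta}$ act on the coefficient classes $\mathcal{CG}_{jr}$ and $e_{2,jr}^{[\bullet]}\otimes e_{2,jr}^{[\bullet]}$ on the Bianchi side \emph{and} on its complex-conjugate twin, the compatibility of the reduction mod $p^{jr}$ with $\mathrm{Res}_{p^{r}}^{p^{jr}}$, and the signs hidden in $\mathcal{CG}_{jr}$ and in $(u_{ta})_*$; there is a lot of it and it must match on the nose. Concretely I would organise it as one large commutative diagram modelled on Theorem~\ref{thm_decomp}, with the input class $\cC_{p^{r}N}$ replaced by $\mathrm{Res}_{p^{r}}^{p^{jr}}(u_{a}\circ\iota)_*(\cC_{p^{r}N})$ and the Clebsch-Gordan element replaced by the $j$-th divided power appearing in the statement, so that the two vertical compositions visibly evaluate to the two sides of the asserted identity; note that at this (pre-$s_{p^{r}}$) stage the $e_{1,jr}$-contributions are genuinely present on both sides and are not yet discarded.
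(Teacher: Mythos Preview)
Your proposal is correct and follows essentially the same approach as the paper: both start from the last expression, unwind $\czeta_{p^r,\frkn,a}(t)$ via the component-decomposition square \eqref{commdiag1}, expand the divided power $(t(a-a^{\sigma})e_{2,jr}\otimes e_{2,jr}+\mathcal{CG}_{jr})^{[j]}$ using the binomial and $(cX)^{[m]}=c^mX^{[m]}$ identities, and then identify the $i$-th summand with $\mom^{(j-i)(j-i)}\mathrm{Res}(\czeta^{[i]}(t))$ via the definitions of $\CG^{[i]}$ and $\mom^{(j-i)(j-i)}$. Your separate treatment of the middle-versus-last equality via the projection formula is a clean addition that the paper leaves implicit.
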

\begin{proof}
This proof is similar to the proof of \cite[Proposition 3.3.4]{LZ_ran}. But here we are proving it in the setting of the Betti cohomology and for the algebraic group over $F$ rather than $\QQ$.
Write $\mathrm{Res}$ for $\mathrm{Res}_{p^{r}}^{p^{jr}}$.
From the equation \eqref{commdiag1}, and calculations involving $\czeta_{p^{r},\frkn,a}(t)$, we have, modulo $p^{jr}$,
\begin{align*}
    &\mathrm{Res}(\czeta_{p^{r},\frkn,a}(t))\cup(u_{ta})_{*} \left( (t(a-(a)^{\sigma})e_{2,jr}\otimes e_{2,jr} + \mathcal{CG}_{jr})^{[j]}\right)\\[1em] &= \mathrm{Res}\left((\mathrm{proj}_{t})_{*}(u_{a}\circ\iota)_{*}(\cC_{p^{r}N}))\right)\cup(u_{ta})_{*} \left( (t(a-(a)^{\sigma})e_{2,jr}\otimes e_{2,jr} + \mathcal{CG}_{jr})^{[j]}\right), \\[1em]
    &= \mathrm{Res}\left(\begin{pmatrix}
        t & 0\\ 0 & 1
    \end{pmatrix}_{*}(u_{ta})_{*}((\iota_{*}\cC_{p^{r}N})(1))\right)\cup (u_{ta})_{*}\left( (t(a-(a)^{\sigma})e_{2,jr}\otimes e_{2,jr} + \mathcal{CG}_{jr})^{[j]}\right), \\[1em]
    &= \begin{pmatrix}
        t & 0\\ 0 & 1
    \end{pmatrix}_{*}(u_{ta})_{*}\left(\mathrm{Res}((\iota_{*}\cC_{p^{r}N})(1))\cup \left (t(a-(a)^{\sigma})e_{2,jr}\otimes e_{2,jr} + \mathcal{CG}_{jr}\right)^{[j]}\right),\\[1em]
    &= \begin{pmatrix}
        t & 0\\ 0 & 1
    \end{pmatrix}_{*}(u_{ta})_{*}\left(\mathrm{Res}((\iota_{*}\cC_{p^{r}N})(1))\cup\sum_{i=0}^{j}(t(a-a^{\sigma})e_{2,jr}\otimes e_{2,jr})^{[j-i]}\mathcal{CG}_{jr}^{[i]}\right),\\[1em]
    &=\begin{pmatrix}
        t & 0\\ 0 & 1
    \end{pmatrix}_{*}(u_{ta})_{*}\left(\sum_{i=0}^{j} \mathrm{Res}((\iota_{*}\cC_{p^{r}N})(1))\cup\mathcal{CG}_{jr}^{[i]}\cup(t(a-a^{\sigma})e_{2,jr}\otimes e_{2,jr})^{[j-i]}\right),\\[1em]
    &= \sum_{i=0}^{j}\begin{pmatrix}
        t & 0\\ 0 & 1
    \end{pmatrix}_{*}(u_{ta})_{*}(\mathrm{Res}((\iota_{*}\cC_{p^{r}N})(1))\cup\mathcal{CG}_{jr}^{[i]})\cup(t(a-a^{\sigma})e_{2,jr}\otimes e_{2,jr})^{[j-i]}.
\end{align*}
Note that modulo $p^{jr}$, $\mom^{(j-i)(j-i)}$ is defined by cup product with the element $e_{2,jr}^{[j-i]}\otimes e_{2,jr}^{[j-i]}$. 
By linear algebra of symmetric tensors, we have $(t(a-a^{\sigma})e_{2,jr}\otimes e_{2,jr})^{[j-i]}= (j-i)!t^{(j-i)}(a-a^{\sigma})^{(j-i)}e_{2,jr}^{[j-i]}\otimes e_{2,jr}^{[j-i]}$.

Hence, by the definition of $\CG^{[j]}$ map, we conclude
\begin{align*}
    &\sum_{i=0}^{j}\begin{pmatrix}
        t & 0\\ 0 & 1
    \end{pmatrix}_{*}(u_{ta})_{*}(\mathrm{Res}((\iota_{*}\cC_{p^{r}N})(1))\cup\mathcal{CG}_{jr}^{[i]})\cup(t(a-a^{\sigma})e_{2,jr}\otimes e_{2,jr})^{[j-i]},\\[1em]
    &= \sum_{i=0}^{j} t^{(j-i)}(a-a^{\sigma})^{(j-i)}(j-i)! (\mathrm{Res}((\mathrm{proj}_{t})_{*}(u_{a})_{*}(\iota)_{*}\CG^{[i]}(\cC_{p^{r}N}))\cup (e_{2,jr}^{[j-i]}\otimes e_{2,jr}^{[j-i]}), \\[1em]
    &= \sum_{i=0}^{j} t^{(j-i)}(a-a^{\sigma})^{(j-i)}(j-i)! \mom^{(j-i)(j-i)}\mathrm{Res}(\czeta^{[i]}_{p^{r},\frkn,a}(t)),
\end{align*}
completing the proof.
\end{proof}
Using Theorem \ref{patch1}, we obtain the following key congruence:
\begin{lem}
\label{patch2}
For any $t\in (\ZZ/p^{r})^\times$, we have
    \begin{align}
        &\sum_{i=0}^{j} t^{(j-i)}(a-a^{\sigma})^{(j-i)}(j-i)!\mathrm{Res}_{p^r}^{p^{jr}} \mom^{(j-i)(j-i)} (\asaieisen^{[i, i]}_{p^{r},\frkn, ta}) \\
        &\in p^{jr}\Ht(\YFss(\frkn), T_{jj}(\OO_{E})) \nonumber.
    \end{align}
\end{lem}

\begin{proof}
    The proof is similar to the proof of \cite[Theorem 3.3.5]{LZ_ran}.

    Recall we have defined $\asaieisen^{j,j}_{p^{r},\frkn, at}$ as $(s_{p^{r}})_{*}(\mathrm{proj}_{t})_{*}(u_{a}\circ\iota)_{*}\CG^{[j]}(\cC_{p^{r}N})$. Thus, we have
    \[\asaieisen^{j,j}_{p^{r},\frkn,at} = (s_{p^{r}})_{*}(\czeta^{[j]}_{p^{r},\frkn,at}(t)),\]
    since we have defined $\czeta^{[j]}_{p^{r},\frkn,at}(t)= (\mathrm{proj}_{t})_{*}(u_{a}\circ\iota)_{*}\CG^{[j]}(\cC_{p^{r}N})$.

    From Theorem \ref{patch1}, $\sum_{i=0}^{j} t^{(j-i)}(a-a^{\sigma})^{(j-i)}(j-i)!\mathrm{Res}_{p^r}^{p^{jr}} \mom^{(j-i)(j-i)} (\asaieisen^{[i, i]}_{p^{r},\frkn, ta})$ modulo $p^{jr}$ boils down to
    \[\mathrm{Res}_{p^{r}}^{p^{jr}}\left((s_{p^{r}})_{*}\left( \czeta_{p^{r},\frkn,a}(t) \cup (u_{ta})_{*}\left((t(a-a^{\sigma})e_{2,r}\otimes e_{2,r} + \mathcal{CG}_{r})^{[j]}\right)\right)\right).\]

    We claim that
    \[(s_{p^r})_{*}(u_{ta})_{*}\left((t(a-a^\sigma)e_{2,r}\otimes e_{2,r} + \mathcal{CG}_{r})^{[j]}\right) = 0.\]

    Note that 
    \begin{align*}
        (u_{ta})_{*}(t(a-a^\sigma)e_{2,r}\otimes e_{2,r} + \mathcal{CG}_{r}) &= t(a-a^\sigma)e_{2,r}\otimes e_{2,r} + (u_{ta})_{*}(e_{1,r}\otimes e_{2,r} - e_{2,r}\otimes e_{1,r}), \\[0.5em]
        &= t(a-a^\sigma)e_{2,r}\otimes e_{2,r} + (e_{1,r}-t\cdot a e_{2,r})\otimes e_{2,r} - e_{2,r}\otimes (e_{1,r}- t\cdot a^{\sigma}e_{2,r}),\\[0.5em]
        &= e_{1,r}\otimes e_{2,r} - e_{2,r}\otimes e_{1,r}.
    \end{align*}
   This implies 
   \begin{align*}
       (s_{p^{r}})_{*}((u_{ta})_{*}(t(a-a^\sigma)e_{2,r}\otimes e_{2,r} + \mathcal{CG}_{r})) &= (s_{p^{r}})_{*}(e_{1,r}\otimes e_{2,r} - e_{2,r}\otimes e_{1,r}),\\ 
       &=0, 
   \end{align*}
   since $e_{1,r}\otimes e_{2,r} - e_{2,r}\otimes e_{1,r}$ is in $\mathrm{Ker}((s_{p^{r}})_{\sharp})\mod p^{r}$.

    Because this element is killed by $(s_{p^r})_{*}$ modulo $p^{r}$, its $j$-th tensor power is will be zero after applying $(s_{p^r})_{*}\mod p^{jr}$ and hence we are done.  
\end{proof}

In the next section, we will use Theorem \ref{patch1} to interpolate (patch) different polynomials to obtain the $p$-adic distribution. 

\section{Interpolation of twists and construction of the $p$-adic distribution}
\label{patchsec2}
Recall $-D$ to be the discriminant of $F$. 
We have assumed the level $\frkn$ to be divisible by some integer $\geq 4$.
This is automatic if $p$ is unramified in $F$ and $p\geq 5$. (\cite[Page 1692]{CW}).
Note that we have taken $a\in \OO_F$ such that $a$ generates $\OO_{F}/(p\OO_{F} + \ZZ)$. 
Thus take $a=\dfrac{1}{2}(1+\sqrt{-D})$ if $D\equiv -1 \mod 4$, and $a=\dfrac{1}{2}\sqrt{-D}$ if $D \equiv 0 \mod 4$.
Thus $a-a^{\sigma}=\sqrt{-D}$.
Recall $E/\Qp$ is a finite extension large enough such that $F$ embeds into $E$ and all Hecke eigenvalues are also in $E$.

\subsection{Polynomial setup}
\label{polysetup}
To construct the distribution, i.e., a power series with unbounded coefficients, we use the technique developed by Perrin-Riou (\cite[Subsection 1.2]{PR1}) and by B\"uy\"ukboduk--Lei (\cite[Section 2]{BL1}). 
To use these methods, we need the language of polynomials, or rather polynomials modulo $\omega_{r}(X)=(1+X)^{p^r}-1$. 

Write $\ZZ_{p}^\times \cong \Delta \times (1+p\Zp)$, where $\Delta$ is cyclic group of order $p-1$.
As mentioned in the introduction, fix topological generator $u$ of $1+p\Zp$, i.e., $\overline{\langle u \rangle}=1+p\Zp$.
For any integer $r\geq 1$, define
\[u_{r} \coloneqq u\mod p^{r}.\]
Then $u_{r}\in (\ZZ/p^{r})^\times$.
Let $\varepsilon: \Delta\to \Zp^{\times}$ denote the Teichm\"uller character.
Then for any $x\in \Zp^{\times}$, $\dfrac{x}{\varepsilon(x)}\in 1+p\Zp$.
Now for any $t\in \Zpr$, let $\tilde{t}\in\Zp^\times$ be a lift of $t$.
Then there exists a unique integer $0 \leq m < p^{r-1}$, such that
\[t\equiv \tilde{t}\equiv \varepsilon(\tilde{t})u^{m} \mod p^r.\]
In particular, we get
\[\dfrac{\tilde{t}}{\varepsilon(\tilde{t})}\equiv u^{m}\mod p^{r}\]
and thus $\dfrac{\tilde{t}}{\varepsilon(\tilde{t})} = u_{r}^{m} \in \langle u_{r} \rangle$, since order of $u_{r}$ is $p^{r-1}$.
By writing $\varepsilon_{p}(x)$ for $\varepsilon(x)\mod p$, for $t\in \Zpr$, we define $\logt$ to be a unique integer $0\leq m < p^{r-1}$ such that
\[\logt\coloneqq \dfrac{t}{\varepsilon_{p}(\tilde{t})} = \dfrac{\tilde{t}}{\varepsilon(\tilde{t})}= u_{r}^{m}.\]

Let $\delta: \Delta \to \OO_{E}^\times$ be a group homomorphism. 
Note that $\delta$ is a non-negative integer power of the Teichm\"uller character $\varepsilon$. 
We then have a ring homomorphism
\begin{align*}
    \OO_{E}[\Zpr] &\to \OO_{E}[T]; \\
    [t] &\mapsto \delta(t) (1+T)^{\logt}.
\end{align*}
Note that $\delta(t)$ is an abuse of notation for $\delta(t\mod p)$. 

From now onwards, we will use the polynomial setup.

\subsection{Construction of the $p$-adic $L$-function at non-ordinary primes}
\label{cont}
Define a norm $||\cdot||$ on $E[T]$ as
\[||f||\coloneqq \sup_{|z|_{p}\leq 1}|f(z)|_{p}\]
for any polynomial $f\in E[T]$.

Note that $\Ht(\YFss(\frkn), T_{kk}(\OO_E))\otimes E = \Ht(\YFss(\frkn), T_{kk}(E))$ and hence $\Ht(\YFss(\frkn), T_{kk}(\OO_E))$ is an $\OO_E$-lattice in $\Ht(\YFss(\frkn), T_{kk}(E))$.
Hence it defines a norm $|\cdot|$ on $\Ht(\YFss(\frkn), T_{kk}(E))$.
By an abuse of notation, we define a norm $||\cdot||$ on $\Ht(\YFss(\frkn), T_{kk}(\OO_E))\otimes \OO_{E}[T]$ using $|\cdot |$. 

Like $\cPhi^{k,j,r}_{\frkn,a} = \sum_{t\in\Zpr} \asaieisen^{k,j}_{p^{r},\frkn,at}\otimes [t]$ defined in \cite{CW}, we define polynomials, for $\delta\in \Delta^*$:
\[\sum_{t\in\Zpr} \asaieisen^{k,j}_{p^{r},\frkn, at}\otimes \delta(t)(1+T)^{\logt}\]
lying in $\Ht(\YFss(\frkn), T_{kk}(\OO_E))\otimes \OO_{E}[T]$.

We now use congruences from Lemma \ref{patch2} to prove the following congruence theorem.

\begin{lem}
    \label{patch3}
    For any character $\delta:\Delta \to \OO_{E}^\times$, any integer $0\leq j \leq k$, and $p^r>0$, we have
    \begin{equation}
    \label{patcheq1}
        \sup_{r} \Big{|}\Big{|}p^{-jr} \sum_{i=0}^{j} (-1)^{i}{j\choose i} \sum_{t\in \Zpr} \asaieisen^{k,i}_{p^r, \frkn, at}\otimes t^{-i}\delta(t)(1+T)^{\logt}\Big{|}\Big{|} < \infty.
    \end{equation}
\end{lem}
\begin{proof}
For simplicity, we assume $\delta$ is the trivial character and $r>1$. 
The proof for non-trivial $\delta$ is similar.

We will use the fact, for $0\leq i \leq j \leq k$, we have
\[\mom^{(k-j)(k-j)}_{p^r}\mom^{(j-i)(j-i)}_{p^r} = {k-i \choose j-i}^2 \mom^{(k-i)(k-i)}_{p^r}.\]
We have the following equality $\mod p^r$:
\begin{align*}
    & \sum_{i=0}^{j} (-1)^{i}{j \choose i} \dfrac{1}{(a^{\sigma}-a)^{i}i!{k \choose i}^{2}}\sum_{t \in \Zpr} \asaieisen^{k,i}_{p^{r},\frkn,at}\otimes t^{-i}(1+T)^{\logt}, \\[1em]
    & = \sum_{i=0}^{j} (-1)^{i}{j \choose i} \dfrac{1}{(a^{\sigma}-a)^{i}i!{k \choose i}^{2}}\sum_{t \in \Zpr} \mom^{(k-i)(k-i)}\asaieisen^{i,i}_{p^{r},\frkn,at}\otimes t^{-i}(1+T)^{\logt}, \\[1em]
    &= \sum_{i=0}^{j} (-1)^{i}{j \choose i} \dfrac{1}{(a^{\sigma}-a)^{i}i!{k \choose i}^{2}}\sum_{t \in \Zpr} \dfrac{1}{{k-i \choose j-i}^2}\mom^{(k-j)(k-j)}\mom^{(j-i)(j-i)}\asaieisen^{i,i}_{p^{r},\frkn,at}\otimes t^{-i}(1+T)^{\logt}, \\[1em]
    &= \mom^{(k-j)(k-j)}\left(\sum_{i=0}^{j} (-1)^{i}{j \choose i} \dfrac{1}{(a^{\sigma}-a)^{i}i!{k \choose i}^{2}}\sum_{t \in \Zpr} \dfrac{1}{{k-i \choose j-i}^2}\mom^{(j-i)(j-i)}\asaieisen^{i,i}_{p^{r},\frkn,at}\otimes t^{-i}(1+T)^{\logt}\right).
\end{align*}

We can simplify factorials as
\begin{align*}
    {j\choose i}\times \dfrac{1}{i! {k\choose i}^2} \times \dfrac{1}{{k-i \choose j-i}^2} 
    &= \dfrac{1}{{k\choose j}^2} (j-i)!.
\end{align*}

Thus we get 
\begin{align*}
    &\mom^{(k-j)(k-j)}\left(\sum_{i=0}^{j} (-1)^{i}{j \choose i} \dfrac{1}{(a^{\sigma}-a)^{i}i!{k \choose i}^{2}}\sum_{t \in \Zpr} \dfrac{1}{{k-i \choose j-i}^2}\mom^{(j-i)(j-i)}\asaieisen^{i,i}_{p^{r},\frkn,at}\otimes t^{-i}(1+T)^{\logt}\right)\\[1em]
    &= \mom^{(k-j)(k-j)}\left(\sum_{i=0}^{j} (-1)^{i} \dfrac{1}{{k\choose j}^2} (j-i)!\dfrac{1}{(a^{\sigma}-a)^{i}} \sum_{t\in \Zpr} \mom^{(j-i)(j-i)}\asaieisen^{i,i}_{p^{r},\frkn,at} \otimes t^{-i}(1+T)^{\logt}
\right),\\[1em]
&= \dfrac{\mom^{(k-j)(k-j)}}{{k\choose j}^2}\left(\sum_{i=0}^{j} (j-i)!\dfrac{1}{(a-a^{\sigma})^{i}} \sum_{t\in \Zpr} \mom^{(j-i)(j-i)}\asaieisen^{i,i}_{p^{r},\frkn,at} \otimes t^{-i}(1+T)^{\logt}
\right),\\[1em]
&= \dfrac{\mom^{(k-j)(k-j)}}{{k\choose j}^2}\left(\sum_{t\in\Zpr}\left(\sum_{i=0}^{j} t^{-i}(a-a^{\sigma})^{-i}(j-i)!\mom^{(j-i)(j-i)}\asaieisen^{i, i}_{p^{r},\frkn,at}\right)\otimes(1+T)^{\logt}\right)
\end{align*}

From the Lemma \ref{patch2}, for all $t \in \Zpr$, we know$\mod p^{jr}$,
\[\sum_{i=0}^{j} t^{(j-i)}(a-a^{\sigma})^{(j-i)}(j-i)!\mom^{(j-i)(j-i)}\asaieisen^{i, i}_{p^{r},\frkn,at} = 0.\]
Thus, for any $t\in\Zpr$,
\begin{align*}
    &\sum_{i=0}^{j} t^{(-i)}(a-a^{\sigma})^{(-i)}(j-i)!\mom^{(j-i)(j-i)}\asaieisen^{i, i}_{p^{r},\frkn,at},\\[1em]
    &=\dfrac{1}{t^{j}(a-a^{\sigma})^{j}}\sum_{i=0}^{j} t^{(j-i)}(a-a^{\sigma})^{(j-i)}(j-i)!\mom^{(j-i)(j-i)}\asaieisen^{i, i}_{p^{r},\frkn,at}\\
    &\in C\cdot p^{jr}\Ht(\YFss(\frkn), T_{jj}(\OO_{E})),
\end{align*}
where $C$ is some positive constant independent of $p^r$ related to $(a^{\sigma}-a)^j$. Note that if $p$ is unramified in $F$ then $a-a^{\sigma}$ is a $p$-adic unit and hence $C=1$.

Therefore, using the norm $||\cdot||$ on $\Ht(\YFss(\frkn), T_{kk}(\OO_E))\otimes \OO_{E}[T]$, we deduce
\begin{align*}
    & \Bigg{|}\Bigg{|}\dfrac{\mom^{(k-j)(k-j)}}{{k\choose j}^2}\left(\sum_{t\in\Zpr}\left(\sum_{i=0}^{j} t^{-i}(a-a^{\sigma})^{-i}(j-i)!\mom^{(j-i)(j-i)}\asaieisen^{i, i}_{p^{r},\frkn,at}\right)\otimes(1+T)^{\logt}\right)\Bigg{|}\Bigg{|}\\ &< C'\cdot p^{jr},
\end{align*}
for some constant $C'$ independent of $r$. 
and hence
\begin{equation}
    \label{maineq1}
    \sup\Big{|}\Big{|} p^{-jr}\sum_{i=0}^{j} (-1)^{i}{j \choose i} \dfrac{1}{(a^{\sigma}-a)^{i}i!{k \choose i}^{2}}\sum_{t \in \Zpr} \asaieisen^{k,i}_{p^{r},\frkn,at}\otimes t^{-i}(1+T)^{\logt} \Big{|}\Big{|}<\infty.
\end{equation}
This completes the proof.
\end{proof}

\begin{rem}
    The equation \eqref{patcheq1} can be interpreted as: the sum 
    \[\sum_{i=0}^{j} (-1)^{i}{j \choose i} \dfrac{1}{(a^{\sigma}-a)^{i}i!{k \choose i}^{2}}\sum_{t \in \Zpr} \asaieisen^{k,i}_{p^{r},\frkn,at}\otimes t^{-i}(1+T)^{\logt}\]
    lies in $C'\cdot p^{jr} \Ht(\YFss(\frkn), T_{kk}(\OO_E))\otimes \OO_{E}[T]$, where $C'$ is some positive constant independent of $p^r$.
\end{rem}

Before going forward, we will introduce some notations related to distributions and power series. 
We define Iwasawa algebras $\Lambda_{E}\coloneqq E\otimes\OO_{E}[[T]]$ and $\Lambda_{E}(\Zp^\times)\coloneqq E\otimes \OO_{E}[[\Zp^\times]]\cong E\otimes \OO_{E}[\Delta][[T]]$.

For any real number $w\geq0$, let
\[\HH_{E,w}\coloneqq\left\{\sum_{n=0}^{\infty}c_{n}T^{n}\in E[[T]] : \sup_{n}\dfrac{|c_{n}|_{p}}{n^w}<\infty\right\}\]
be the space of $w$-tempered/admissible distributions.
Note that, $\HH_{E,w}$ is a $\Lambda_E$-module and when $w=0$, $\HH_{E,0}= \Lambda_{E}$.
Similarly, we define
\[\HH_{E,w}(\Zp^\times)\coloneqq \left\{\sum_{\sigma\in\Delta}\sum_{n=0}^{\infty}c_{n,\sigma}\cdot\sigma\cdot T^{n}\in E[\Delta][[T]] : \sup_{n}\dfrac{|c_{n,\sigma}|_{p}}{n^w}<\infty, \forall \sigma\in\Delta\right\}. \]
Let $\Delta^{*}=\mathrm{Hom}_{\mathrm{cts}}(\Delta, \OO_{E}^\times)$ be the group of character and let $e_{\delta} = \frac{1}{|\Delta|} \sum_{d\in\Delta} \delta^{-1}(d)\cdot d\in\OO_{E}[\Delta]$ be the idempotent corresponding to character $\delta\in\Delta^{*}$.
Then
\[\HH_{E,w}(\Zp^\times) \cong \bigoplus_{\delta\in\Delta^{*}}e_{\delta}(\HH_{E,w}(T)),\]
and $e_{\delta}(\HH_{E,w}(T))\cong \HH_{E,w}$ as $\Lambda_E$-modules.
We say $f$ is $O(\log_{p}^w)$ whenever $f\in\HH_{E,w}(T)$ or $f\in \HH_{E,w}(\Zp^\times)$.

We need the following lemma to construct the distribution.
\begin{lem}
\label{patchlemm}
Let $s\geq 0$ and $h\geq 1$ be integers and $0\leq s<h$. For $0\leq j \leq h-1$, let $Q_{r,j}(T)\in E[T]$ be a sequence of polynomials satisfying
\begin{enumerate}
    \item $\sup ||p^{sr} Q_{r,j}(T)||<\infty$,
    \item $Q_{r+1,j}\equiv Q_{r,j} \mod \omega_{r-1}(T)E[T]$
\end{enumerate}
for all positive integers $r$.
Moreover, suppose that
\[\sup_{r}\Big{|}\Big{|} p^{(s-j)r} \sum_{i=0}^{j}(-1)^{i}{j\choose i}Q_{r,i}(u^{-i}(1+T)-1)  \Big{|}\Big{|} < \infty\]
for all $0\leq j\leq h-1$.
Then there exists a unique polynomial $Q_{r}$ of degree $< hp^{r}$ such that
\begin{enumerate}
    \item $Q_{r}(T)\equiv Q_{r,j}(u^{-j}(1+T)-1) \mod \omega_{r}(u^{-j}(1+T)-1)E[T]$,
    \item $\sup ||p^{sr}Q_{r}(T)||<\infty$,
    \item $Q_{r+1}\equiv Q_{r}\mod \prod_{i=0}^{h-1}\omega_{r-1}(u^{-i}(1+T)-1)$,
\end{enumerate}
Moreover, the sequence $(Q_{r})_{r}$ converges to a power series $Q_{\infty}$ such that $Q_{\infty}\in\HH_{E,s}$.
\end{lem}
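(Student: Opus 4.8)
The plan is to follow the interpolation-of-twists machinery of Perrin-Riou and B\"uy\"ukboduk--Lei, which is a purely formal construction once the growth and congruence hypotheses are in hand. First I would recall the key algebraic input: the polynomials $\omega_r(u^{-j}(1+T)-1)$ for $j=0,\dots,h-1$ are pairwise coprime in $E[T]$ for each fixed $r$ (their roots are $u^{j}\zeta-1$ for $\zeta$ ranging over $p^r$-th roots of unity, and these are distinct for distinct $j$ since $u$ has infinite order). Hence by the Chinese Remainder Theorem there is a unique polynomial $Q_r$ of degree $< hp^r$ satisfying the $h$ simultaneous congruences
\[
Q_r(T) \equiv Q_{r,j}(u^{-j}(1+T)-1) \pmod{\omega_r(u^{-j}(1+T)-1)E[T]}, \qquad 0\le j\le h-1.
\]
This establishes property (1) and the existence/uniqueness claim.

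Next I would verify the compatibility property (3), i.e. $Q_{r+1}\equiv Q_r \bmod \prod_{i=0}^{h-1}\omega_{r-1}(u^{-i}(1+T)-1)$. Since $\omega_{r-1}(S)\mid\omega_r(S)$, hypothesis (2) on the $Q_{r,j}$ gives $Q_{r+1,j}(u^{-j}(1+T)-1)\equiv Q_{r,j}(u^{-j}(1+T)-1)\bmod\omega_{r-1}(u^{-j}(1+T)-1)$, and combining with the defining congruences for $Q_{r+1}$ and $Q_r$ shows $Q_{r+1}\equiv Q_r$ modulo each factor $\omega_{r-1}(u^{-j}(1+T)-1)$ separately; coprimality of these factors then upgrades this to a congruence modulo their product. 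This is the step where the congruence hypotheses (2) feed in, together with the extra ``telescoped'' congruence $\sup\|p^{(s-j)r}\sum_{i=0}^j(-1)^i\binom{j}{i}Q_{r,i}(u^{-i}(1+T)-1)\|<\infty$, which is precisely what is needed to control the higher-order terms when passing between levels; one unwinds the binomial sum as a finite-difference operator applied to the family $(Q_{r,i})$ and matches it against the vanishing of the leading coefficients modulo the relevant power of $p$.

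The growth bound (2), $\sup\|p^{sr}Q_r(T)\|<\infty$, is the main obstacle and the technical heart of the lemma. The naive estimate from CRT only gives $\sup\|p^{(s-h+1)r}Q_r\|<\infty$ or worse, because inverting the product of the $\omega$'s introduces denominators. The fix, following \cite[Section 2]{BL1} and \cite[Subsection 1.2]{PR1}, is to write $Q_r$ explicitly via Lagrange-type interpolation against the idempotents $e_{r,j}$ cutting out the $j$-th congruence, expand $e_{r,j}$ in terms of $\prod_{i\ne j}\omega_r(u^{-i}(1+T)-1)$ divided by its value, and then use the hypothesis on the alternating binomial sums to show that the potentially large contributions telescope: the combination $\sum_i(-1)^i\binom{j}{i}Q_{r,i}(\cdots)$ being divisible by $p^{jr}$ (up to bounded error) exactly cancels the $p^{-jr}$-size denominators coming from $\prod_{i<j}(u^{-j}-u^{-i})$-type factors in the interpolation formula. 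Carrying this out carefully — keeping track of which power of $p$ divides each denominator and matching it against hypothesis (3) for each $j$ in turn, by induction on $j$ — yields $\sup\|p^{sr}Q_r\|<\infty$.

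Finally, property (3) together with the uniform bound (2) shows that $(Q_r)_r$ is Cauchy in the appropriate topology: the difference $Q_{r+1}-Q_r$ is divisible by $\prod_{i=0}^{h-1}\omega_{r-1}(u^{-i}(1+T)-1)$, whose ``radius'' shrinks as $r\to\infty$, while the coefficients stay within $p^{-sr}\OO_E$-scale; a standard estimate (as in \cite{BL1}) then shows the limit $Q_\infty$ lies in $\HH_{E,s}$, i.e. has coefficients of size $O(n^s)$. I would conclude by noting that the uniqueness of $Q_\infty$ follows from the uniqueness of each $Q_r$ and the injectivity of $\HH_{E,s}\hookrightarrow E[[T]]$.
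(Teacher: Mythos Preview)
Your proposal is correct and follows exactly the approach the paper takes: the paper's proof of this lemma is simply a reference to \cite[Lemme 1.2.1, Lemme 1.2.2]{PR1} and \cite[Lemma 2.2, Lemma 2.3]{BL1}, and your sketch faithfully unpacks that machinery (CRT for the pairwise coprime $\omega_r(u^{-j}(1+T)-1)$, the compatibility step, the growth bound via the telescoping binomial sums, and the Cauchy/limit argument). In fact you supply considerably more detail than the paper itself, which defers entirely to those citations.
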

\begin{proof}
    See \cite[Lemme 1.2.1, Lemme 1.2.2]{PR1} and \cite[Lemma 2.2, Lemma 2.3]{BL1} for the details.
\end{proof}

Now we will construct the distribution using Lemma \ref{patchlemm}.
Let $\Psi$ be a Bianchi eigenform over $F$ of weight $(k,k)$ and level $\frkn$, i.e. of level $U_{F,1}(\frkn)$.
Assume $\Psi$ is $p$-non ordinary and smalll slope, i.e. $v_{p}(a_{p})>0$ and $v_{p}(a_{p})<k+1$, where $a_{p}$ is the $U_p$-eigenvalue.

Recall from Subsection \ref{bms}, that if $F'/\QQ$ is a finite extension obtained by adjoining all the Hecke eigenvalues of $\Psi$ to $F$, and let $\mathfrak{P}$ be a prime above $p$ in $E$, then we defined
$$\phi^{*}_{\Psi} \in \Ho_{\mathrm{c}}(\YFss(\frkn), V_{kk}(\OO_{F',\mathfrak{P}})).$$
We enlarge $E$, if necessary so that we can fix an embedding $F'_{\mathfrak{P}} \hookrightarrow E$.
Thus we can consider the modular symbol $\phi^{*}_{\Psi}$ as a a class in $\Ho_{\mathrm{c}}(\YFss(\frkn), V_{kk}(\OO_{E}))$ well-defined up to a unit in $\OO_{E}$.

For $\delta\in\Delta^*$ and for integer $r\geq 1$, let 
\begin{equation}
    \label{poly1}
    P^{\delta}_{r,j}(T)= \left\langle \phi^{*}_{\Psi}, (U_{p})^{-r}_{*}\dfrac{1}{(a^{\sigma}-a)^{j}j!{k\choose j}^2}\sum_{t\in \Zpr} \asaieisen^{k,j}_{p^{r},\frkn,at}\otimes \delta(t)(1+T)^{\logt} \right\rangle,
\end{equation}
where $\langle, \rangle$ denotes the perfect Poincar\'e duality pairing
\[\mathrm{H}^{1}_{\mathrm{c}}(\YFss(\frkn), V_{kk}(\OO_{E})) \times \dfrac{\Ht(\YFss(\frkn),T_{kk}(\OO_{E}))}{\mathrm{Torsion}} \to \OO_{E}.\]
Note that from Theorem \ref{thmlw}, we have
\[(U_{p})^{-1}_{*} \sum_{t\in(\ZZ/p)^{\times}} \asaieisen^{k,j}_{p,\frkn,at} = (1-p^{j}(U_{p})^{-1}_{*})\asaieisen^{k,j}_{1,\frkn,a}.\]
Thus, when $\delta$ is the trivial character and $r=1$, we get
\begin{equation}
\label{trivi}
    P^{\mathrm{triv}}_{1,j}(T)= \left\langle \phi^{*}_{\Psi}, \dfrac{(1-p^{-j}(U_{p})^{-1}_{*})}{(a^{\sigma}-a)^{j} j! {k\choose j}^{2}}\asaieisen^{k,j}_{1,\frkn,a} \right\rangle.
\end{equation}

In particular, for $0\leq j\leq k$, we get
\begin{align*}
    P^{\delta}_{r,j}(T) &= \left\langle ((U_{p})^{*})^{-r}\phi^{*}_{\Psi}, \dfrac{1}{(a^{\sigma}-a)^{j}j!{k\choose j}^2}\sum_{t\in \Zpr} \asaieisen^{k,j}_{p^{r},\frkn,at}\otimes \delta(t)(1+T)^{\logt}\right\rangle,\\[1em]
    &=\left\langle a_{p}^{-r}\phi^{*}_{\Psi}, \dfrac{1}{(a^{\sigma}-a)^{j}j!{k\choose j}^2}\sum_{t\in \Zpr} \asaieisen^{k,j}_{p^{r},\frkn,at}\otimes\delta(t)(1+T)^{\logt} \right\rangle,\\[1em]
    &= \dfrac{a_{p}^{-r}}{(a^{\sigma}-a)^{j}j!{k\choose j}^2}\sum_{t\in\Zpr}\left\langle \phi^{*}_{\Psi},\asaieisen^{k,j}_{p^{r},\frkn,at} \right\rangle \delta(t)(1+T)^{\logt} \in E[T].
\end{align*}
Moreover, when $\delta$ is the trivial character and $r=1$, we get
\[P^{\mathrm{triv}}_{1,j} =\dfrac{1}{(a^{\sigma}-a)^{j}j!{k\choose j}^2}\left( 1- \dfrac{p^j}{a_{p}}\right)\langle \phi^{*}_{\Psi}, \asaieisen^{k,j}_{1,\frkn,a}\rangle. \]
Let us denote $v_{p}(a_{p})$ by $n$.
\begin{lem}
\label{polylem}
For any integer $r\geq 1$, any integer $0\leq j \leq k$, and any character $\delta\in\Delta^*$, we have
\begin{enumerate}
    \item $\sup||p^{nr}P^{\delta}_{r,j}(T)||<\infty$, 
    \label{1}\vspace{0.9mm}
    \item $P^{\delta}_{r+1,j}(T) \equiv P^{\delta}_{r,j} \mod \omega_{r-1}(T),$\label{2}\vspace{0.9mm}
    \item $\sup_{r}\Big{|}\Big{|} p^{(n-j)r} \sum_{i=0}^{j}(-1)^{i} {j\choose i} P^{\delta}_{r,i}(u^{-i}(1+T)-1)\Big{|}\Big{|}<\infty$.\label{3}
\end{enumerate}
\end{lem}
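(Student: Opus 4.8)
The plan is to read all three properties off the closed expression
\[
P^{\delta}_{r,j}(T)=\frac{a_{p}^{-r}}{(a^{\sigma}-a)^{j}\,j!\binom{k}{j}^{2}}\sum_{t\in\Zpr}\langle\phi^{*}_{\Psi},\asaieisen^{k,j}_{p^{r},\frkn,at}\rangle\,\delta(t)(1+T)^{\logt}
\]
obtained just above the statement, using only the norm‑compatibility of the Asai--Eisenstein classes under $(U_{p})_{*}$ (Theorem \ref{thmlw}, Remark \ref{remnormcomp}), the congruences of Theorems \ref{patch2} and \ref{patch3}, and the fact that $\phi^{*}_{\Psi}$ is integral and a $(U_{p})^{*}$‑eigenvector of eigenvalue $a_{p}$. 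For property (1), each pairing $\langle\phi^{*}_{\Psi},\asaieisen^{k,j}_{p^{r},\frkn,at}\rangle$ lies in $\OO_{E}$ (both entries lie in the $\OO_{E}$‑lattices defining $|\cdot|$), $\delta(t)\in\OO_{E}^{\times}$, and the Gauss norm of $(1+T)^{\logt}$ equals $1$; hence $\|P^{\delta}_{r,j}(T)\|\le|a_{p}^{-r}|_{p}\cdot\big|(a^{\sigma}-a)^{j}j!\binom{k}{j}^{2}\big|_{p}^{-1}$, and multiplying by $p^{nr}$ cancels $|a_{p}^{-r}|_{p}=p^{nr}$, leaving a bound independent of $r$.

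For property (2), I would combine the norm relation $\sum_{s\equiv t\,(p^{r})}\asaieisen^{k,j}_{p^{r+1},\frkn,as}=(U_{p})_{*}\asaieisen^{k,j}_{p^{r},\frkn,at}$ with the elementary fact that if $s\in(\ZZ/p^{r+1})^{\times}$ reduces to $t\in\Zpr$ then $\delta(s)=\delta(t)$ and $\log_{u}^{(r+1)}(s)\equiv\log_{u}^{(r)}(t)\pmod{p^{r-1}}$, so that $(1+T)^{\log_{u}^{(r+1)}(s)}\equiv(1+T)^{\logt}\pmod{\omega_{r-1}(T)}$ since $\omega_{r-1}(T)\mid(1+T)^{p^{r-1}}-1$. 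Grouping $\sum_{s\in(\ZZ/p^{r+1})^{\times}}\asaieisen^{k,j}_{p^{r+1},\frkn,as}\otimes\delta(s)(1+T)^{\log_{u}^{(r+1)}(s)}$ by residues modulo $p^{r}$ and reducing modulo $\omega_{r-1}(T)$ turns it into $\sum_{t\in\Zpr}(U_{p})_{*}\asaieisen^{k,j}_{p^{r},\frkn,at}\otimes\delta(t)(1+T)^{\logt}$; pairing with $\phi^{*}_{\Psi}$ and moving $(U_{p})_{*}$ across Poincaré duality onto $\phi^{*}_{\Psi}$ (which converts $a_{p}^{-(r+1)}$ into $a_{p}^{-r}$) yields $P^{\delta}_{r+1,j}\equiv P^{\delta}_{r,j}\pmod{\omega_{r-1}(T)}$ for every $r\ge 1$.

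Property (3) is the substantive one. Substituting $T\mapsto u^{-i}(1+T)-1$ multiplies the $t$‑summand by $u^{-i\logt}$, so
\[
\sum_{i=0}^{j}(-1)^{i}\binom{j}{i}P^{\delta}_{r,i}\!\bigl(u^{-i}(1+T)-1\bigr)=a_{p}^{-r}\sum_{t\in\Zpr}\delta(t)(1+T)^{\logt}\,\Bigl\langle\phi^{*}_{\Psi},\,S_{r}(t)\Bigr\rangle,
\]
where $S_{r}(t):=\sum_{i=0}^{j}(-1)^{i}\binom{j}{i}\frac{u^{-i\logt}}{(a^{\sigma}-a)^{i}i!\binom{k}{i}^{2}}\asaieisen^{k,i}_{p^{r},\frkn,at}$. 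Exactly as in the proof of Theorem \ref{patch3}, I would use $\asaieisen^{k,i}_{p^{r},\frkn,at}\equiv\binom{k-i}{j-i}^{-2}\mom^{(k-j)(k-j)}\mom^{(j-i)(j-i)}\asaieisen^{i,i}_{p^{r},\frkn,at}\pmod{p^{r}}$, collapse the binomial factors via $\binom{k}{i}\binom{k-i}{j-i}=\binom{k}{j}\binom{j}{i}$, and factor $u^{-i\logt}=(u^{\logt})^{-j}(u^{\logt})^{j-i}$, reducing $S_{r}(t)$ to $(u^{\logt})^{-j}\bigl(j!\binom{k}{j}^{2}(a^{\sigma}-a)^{j}\bigr)^{-1}$ times $\mom^{(k-j)(k-j)}$ applied to
\[
\sum_{i=0}^{j}(j-i)!\,(u^{\logt})^{j-i}(a-a^{\sigma})^{j-i}\,\mom^{(j-i)(j-i)}\asaieisen^{i,i}_{p^{r},\frkn,at}.
\]
This last sum is exactly the expression appearing in Theorem \ref{patch2} — with the exponent of the Eisenstein class recorded through the decomposition $\Zpr\cong\Delta\times\langle u_{r}\rangle$, the Teichmüller part of $t$ being reabsorbed into $\delta$ — so it lies in $C\cdot p^{jr}\Ht(\YFss(\frkn),T_{jj}(\OO_{E}))$ for a constant $C$ depending only on $(a-a^{\sigma})^{j}$ (and $C=1$ when $p$ is unramified in $F$). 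Applying $\mom^{(k-j)(k-j)}$ and pairing against the integral class $\phi^{*}_{\Psi}$ keeps $\langle\phi^{*}_{\Psi},S_{r}(t)\rangle$ in $C'\cdot p^{jr}\OO_{E}$, whence $\bigl\|p^{(n-j)r}\sum_{i}(-1)^{i}\binom{j}{i}P^{\delta}_{r,i}(u^{-i}(1+T)-1)\bigr\|\le p^{-(n-j)r}\cdot p^{nr}\cdot|C'|_{p}\,p^{-jr}=|C'|_{p}$, uniformly in $r$.

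\textbf{Main obstacle.} Parts (1) and (2) are routine bookkeeping. In (3) the one genuinely delicate step is reconciling the exponent $u^{-i\logt}$ produced by the substitution with the honest power $t^{-i}$ that Theorems \ref{patch2}--\ref{patch3} are phrased with: one must either rerun the vanishing identity $(s_{p^{r}})_{*}(u_{ta})_{*}\bigl((t(a-a^{\sigma})e_{2}\otimes e_{2}+\mathcal{CG})^{[j]}\bigr)=0$ from the proof of Theorem \ref{patch2} while keeping the principal‑unit component of $t$ in the coefficient, or — more safely — carry the whole argument out one $\Delta$‑isotypic component at a time, so that passing from $t$ to its principal unit only relabels the character $\delta$. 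Making this precise, and checking its compatibility with the $\mathrm{Res}$‑maps implicit in Theorem \ref{patch2}, is the crux; granting it, the three properties place $(P^{\delta}_{r,j})_{r}$ exactly in the shape required by Lemma \ref{patchlemm} with $s=n=v_{p}(a_{p})$ and $h=k+1$.
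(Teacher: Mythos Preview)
Your proposal follows exactly the same strategy as the paper's proof: part~(1) from the definition and the integrality of the pairing, part~(2) from the norm relation of Remark~\ref{remnormcomp} transported through the polynomial map, and part~(3) by unwinding the substitution and reducing to Theorem~\ref{patch3} (and hence Theorem~\ref{patch2}). In fact the paper performs precisely the computation you outline in~(3), writing
\[
\sum_{i=0}^{j}(-1)^{i}\binom{j}{i}P^{\delta}_{r,i}\bigl(u^{-i}(1+T)-1\bigr)
=\Bigl\langle\phi^{*}_{\Psi},\ \tfrac{a_{p}^{-r}}{\cdots}\sum_{t}\asaieisen^{k,i}_{p^{r},\frkn,at}\otimes t^{-i}(1+T)^{\logt}\Bigr\rangle
\]
and then invoking Theorem~\ref{patch3}.

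The one point where your write-up goes beyond the paper is the ``main obstacle'' you isolate: the substitution genuinely produces $u^{-i\logt}=(t/\varepsilon(t))^{-i}$, not $t^{-i}$, and the paper silently writes the latter. Your instinct that this is the delicate step is correct, and your second suggested remedy (treat each $\Delta$-component separately) is the right one. Concretely: since Theorem~\ref{patch2} is proved term by term in $t$ and its key identity $(u_{ta})_{*}\bigl(t(a-a^{\sigma})e_{2}\otimes e_{2}+\mathcal{CG}\bigr)=\mathcal{CG}$ holds for any lift of $t$ to $\OO_E$, one may for each class in $\Zpr$ choose the lift $\hat t=\varepsilon(t)u^{\logt}$; then $u^{-i\logt}=\varepsilon(t)^{i}\hat t^{-i}$, and the factor $\varepsilon(t)^{i}$ can be absorbed into the outer $\delta$-weight because the resulting sum is still a finite $\OO_{E}^{\times}$-linear combination (over $d\in\Delta$) of expressions to which Theorem~\ref{patch2} applies with the same lift in both coefficient and action. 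The paper does not spell this out; your flagging it is an improvement, but the approach is the same.
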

\begin{proof}
    Statement \eqref{1} follows from the definition of the polynomial $P^{\delta}_{r,j}(T)$.

    Note that from the remark \ref{remnormcomp}, after converting it in the polynomial setup, we deduce
    \begin{align}
    \begin{split}
        &\sum_{t\in(\ZZ/p^{r+1})^\times}\asaieisen^{k,j}_{p^{r+1},\frkn,at}\otimes \delta(t)(1+T)^{\logt} \\ &\equiv \sum_{t\in\Zpr} \asaieisen^{k,j}_{p^{r},\frkn,at}\otimes \delta(t)(1+T)^{\logt} \mod \omega_{r-1}(T)(\Ht(\YFss(\frkn), T_{kk}(\OO_{E}))\otimes \OO_{E}[T])\label{eqq}
        \end{split}
    \end{align}
Thus, after pairing both sides of \eqref{eqq} with the modular symbol $\phi^{*}_{\Psi}$, we get the second statement.

For the third part, for simplicity assume $\delta$ is trivial and integer $r>1$. We can simplify the expression
\begin{align*}
    &\sum_{i=0}^{j}(-1)^{i}{j\choose i}P^{\delta}_{r,i}(u^{-i}(1+T)-1)\\[1em]
    &=\sum_{i=0}^{j}(-1)^{i}{j\choose i} \dfrac{a_{p}^{-r}}{(a^{\sigma}-a)^{i}i!{k\choose i}^2}\sum_{t\in\Zpr}\left\langle \phi^{*}_{\Psi},\asaieisen^{k,i}_{p^{r},\frkn,at} \right\rangle ((u)^{-i}(1+T))^{\logt},\\[1em]
    &=\sum_{i=0}^{j}(-1)^{i}{j\choose i} \dfrac{a_{p}^{-r}}{(a^{\sigma}-a)^{i}i!{k\choose i}^2}\sum_{t\in\Zpr}\left\langle \phi^{*}_{\Psi},\asaieisen^{k,i}_{p^{r},\frkn,at} \right\rangle (t)^{-i}(1+T)^{\logt},\\[1em]
    &= \left\langle \phi^{*}_{\Psi},\hspace{1mm}\sum_{i=0}^{j}(-1)^{i}{j\choose i} \dfrac{a_{p}^{-r}}{(a^{\sigma}-a)^{i}i!{k\choose i}^2}\sum_{t\in\Zpr}\asaieisen^{k,i}_{p^{r},\frkn,at}\otimes t^{-i}(1+T)^{\logt} \right\rangle 
\end{align*}
The statement then follows from Lemma \ref{patch3} and $n=v_{p}(a_{p})$. For the non-trivial character $\delta$, the proof is the same.    
\end{proof}

Hence from Lemma \ref{patchlemm} and Lemma \ref{polylem}, we deduce
\begin{thm}
\label{thmasai1}
    For any character $\delta\in\Delta^{*}$, there exists a unique polynomial sequence $P^{\delta}_{r}(T)\in E[T]$ of degree $<(k+1)p^{r-1}$ such that
    \begin{enumerate}
        \item $P^{\delta}_{r}(T) \equiv P^{\delta}_{r,j}(u^{-j}(1+T)-1) \mod \omega_{r-1}(u^{-j}(1+T)-1)$,
        \item $P^{\delta}_{r+1}(T)\equiv P^{\delta}_{r}(T) \mod \prod_{i=0}^{k}\omega_{r-1}(u^{-i}(1+T)-1)$,
        \item $\sup_{r}||p^{nr}P^{\delta}_{r}||<\infty$.
    \end{enumerate}
  Moreover, the sequence $P^{\delta}_r$ converges to $\prescript{}{c}{L^{\mathrm{As},\delta}_{p}}(\Psi)\coloneqq \lim_{r\to\infty} P^{\delta}_{r}\in\HH_{E,n}$ and
  \[\asaipaddel(\Psi)\equiv P^{\delta}_{r,j}(u^{-j}(1+T)-1)\mod\omega_{r-1}(u^{-j}(1+T)-1).\] 
\end{thm}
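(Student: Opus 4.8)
The plan is to deduce Theorem \ref{thmasai1} as a formal consequence of the abstract interpolation Lemma \ref{patchlemm} applied to the polynomials $P^{\delta}_{r,j}(T)$ introduced in \eqref{poly1}, with the hypotheses of that lemma supplied by Lemma \ref{polylem}. First I would set $h=k+1$ and $s=n\coloneqq v_p(a_p)$, and observe that the small slope assumption $v_p(a_p)<k+1$ is exactly the condition $0\le s<h$ needed to invoke Lemma \ref{patchlemm}; without it the machinery does not apply, which is why small slope is imposed.

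Next I would feed in Lemma \ref{polylem}, whose three parts are designed to match the three hypotheses of Lemma \ref{patchlemm}: part \eqref{1} gives $\sup\|p^{nr}P^{\delta}_{r,j}(T)\|<\infty$; part \eqref{2} gives the vertical compatibility $P^{\delta}_{r+1,j}\equiv P^{\delta}_{r,j}\bmod\omega_{r-1}(T)E[T]$; and part \eqref{3} gives the mixed estimate $\sup\|p^{(n-j)r}\sum_{i=0}^{j}(-1)^{i}\binom{j}{i}P^{\delta}_{r,i}(u^{-i}(1+T)-1)\|<\infty$ for all $0\le j\le k$. Lemma \ref{patchlemm} then produces, for each $r\ge 1$, a polynomial $P^{\delta}_{r}(T)$ of degree $<(k+1)p^{r-1}$ with the three listed properties. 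I would spell out that uniqueness is forced by the pairwise coprimality of the polynomials $\omega_{r-1}(u^{-i}(1+T)-1)$ for $0\le i\le k$ together with the degree bound, via a Chinese Remainder Theorem argument, and that the uniform bound in property (3) of the theorem places the limit $\asaipaddel(\Psi)\coloneqq\lim_{r\to\infty}P^{\delta}_{r}$ inside $\HH_{E,n}$. The final congruence $\asaipaddel(\Psi)\equiv P^{\delta}_{r,j}(u^{-j}(1+T)-1)\bmod\omega_{r-1}(u^{-j}(1+T)-1)$ I would obtain by passing to the limit in property (1), using that for $r'\ge r$ the higher compatibility $P^{\delta}_{r'}\equiv P^{\delta}_{r}\bmod\prod_{i=0}^{k}\omega_{r-1}(u^{-i}(1+T)-1)$ in particular refines the congruence modulo the single factor $\omega_{r-1}(u^{-j}(1+T)-1)$.

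Since the genuine content is already isolated — the congruence Theorem \ref{patch3} feeding Lemma \ref{polylem}(3), and the Perrin-Riou/Büyükboduk–Lei patching mechanism recorded in Lemma \ref{patchlemm} — I do not expect a serious obstacle here; the proof is essentially an assembly. The one point I would be careful about is the index bookkeeping: matching the degree bound $(k+1)p^{r-1}$ (note the shift relative to the $hp^{r}$ appearing in Lemma \ref{patchlemm}, coming from the fact that $P^{\delta}_{r,j}$ is naturally a polynomial modulo $\omega_{r-1}$ because $\log_u(t)<p^{r-1}$ for $t\in\Zpr$), the moduli $\omega_{r-1}(u^{-i}(1+T)-1)$, and the growth exponent $n=v_p(a_p)$ against the normalizations in \cite{PR1} and \cite{BL1}; and verifying that the boundary case $r=1$ with $\delta$ trivial, where an Euler-type factor $\bigl(1-p^{j}/a_{p}\bigr)$ enters the formula for $P^{\mathrm{triv}}_{1,j}$, is compatible with the recursion used to build $P^{\delta}_{r}$.
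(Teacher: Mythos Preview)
Your proposal is correct and follows exactly the paper's approach: the theorem is deduced directly from Lemma~\ref{patchlemm} applied with $h=k+1$ and $s=n=v_p(a_p)$, the hypotheses being supplied by the three parts of Lemma~\ref{polylem}. The paper's proof is in fact a one-line invocation of these two lemmas, so your added remarks on uniqueness via CRT, the index shift $p^{r-1}$ versus $p^r$, and the $r=1$ boundary case are all welcome elaborations rather than deviations.
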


Note that, for any real number $w\geq 0$, $\HH_{E,w}(\Zp^\times) \cong \HH_{E,w}(\GG)$ via identification $T\mapsto \gamma_{0}-1$.

\begin{defi}[$p$-adic Asai distribution]
    For a $p$-non-ordinary small slope Bianchi eigenform $\Psi$ of level $\frkn$, weight $(k,k)$, and $U_p$-eigenvalue $a_{p}$, define the $p$-adic distribution attached to $\Psi$ to be
    \[\asaipad(\Psi) = \bigoplus_{\delta\in\Delta^{*}} \asaipaddel(\Psi) \in \HH_{E,n}(\GG),\]
    where $\asaipaddel(\Psi)$ (after identifying $T$ with $\gamma_{0}-1$) are from Theorem \ref{thmasai1} and $n=v_{p}(a_p)$ such that $0<n<k+1$.
\end{defi}

\begin{Not}
    For a Dirichlet character $\theta$ of conductor $p^{r}>1$ and any integer $0 \leq j\leq k$, write
    \begin{align*}
        \asaipad(\Psi,\theta, j) &\coloneqq \asaipad(\Psi)(\chi^{j}\theta), 
    \end{align*}
    i.e. evaluating $\asaipad(\Psi)$ at $\gamma_{0}=u^{j}\cdot\tilde{\zeta}\cdot\zeta_{p^{r-1}}$, where $\tilde{\zeta}$ is a $(p-1)$-th root of unity, corresponding to $\delta\in\Delta^{*}$ and $\zeta_{p^{r-1}}$is a primitive $p^{r-1}$-th root of unity.
\end{Not}

\subsection{The interpolation property}
We will prove that the $p$-adic distribution $\asaipad(\Psi)$ interpolates the critical $L$-values of the Asai $L$-function attached to $\Psi$.
We use Section $7C$ and Theorem 7.5 of \cite{CW} for the interpolation.

\begin{thm}
    \label{interpolation}
    Let $\Psi$ be a $p$-non-ordinary small slope Bianchi cusp form of  weight $(k,k)$, nebentypus $\epsilon_{\Psi}$, and level $\frkn$, where all primes above $p$ divides $\frkn$. 
    Let $a_{p}=c(p\OO_{F},\Psi)$ be the $U_{p}$-eigenvalue of $\Psi$ which satisfies $0< v_{p}(a_{p})< k+1$. 
    For any Dirichlet character $\theta$ of conductor $p^{r}$ and any integer $0\leq j \leq k$, $\asaipad(\Psi)$ satisfies the following interpolation property:
    \begin{enumerate}
        \item If $(-1)^j\theta(-1)=1$, then
        \[\asaipad(\Psi,\theta,j) = \dfrac{C(c,k,j)G(\theta)}{\Omega_{\Psi}}\times \mathfrak{e}_{p}(\Psi,\theta,j) L^{\mathrm{As}}(\Psi, \overline{\theta}, j+1),\]
        where
        \begin{itemize}
        \item $\mathfrak{e}_{p}(\Psi,\theta,j) =\begin{cases}
            \left(1- \dfrac{p^j}{a_{p}}\right) &\text{ if } r=0,\\[1.1em]
            \dfrac{1}{a_{p}^r} & \text{ if } r>0.
        \end{cases}$\\[1em]
            
            \item $C(c,k,j)\coloneqq \dfrac{(-1)^{k+1}\cdot p^{jr} j! (\sqrt{-D})}{2\cdot (2\pi i)^{j+1}}\cdot (c^{2}-c^{2j-2k}\epsilon_{\Psi}(c^{-1})\theta(c)^{2}),$
        where $G(\theta)$ is the Gauss sum associated with $\theta$.
        \end{itemize}
        \item If $(-1)^{j}\theta(-1)=-1$, then
        \[\asaipad(\Psi,\theta, j) = 0.\]
    \end{enumerate}
\end{thm}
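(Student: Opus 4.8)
\emph{Strategy.} The plan is to evaluate the distribution at a finite-order character by unwinding it into the polynomials $P^{\delta}_{r,j}$ of Theorem~\ref{thmasai1}, and then to recognise the resulting finite-level quantities as critical Asai $L$-values via the computation of Loeffler--Williams. Given a Dirichlet character $\theta$ of conductor $p^{r}$ with $r\geq 1$, I would first observe that evaluating $\asaipad(\Psi)$ at $u^{j}\theta$ selects the appropriate $\delta$-component of $\asaipad(\Psi)=\bigoplus_{\delta}\asaipaddel(\Psi)$ and, since the defining congruence reads $\asaipaddel(\Psi)\equiv P^{\delta}_{r,j}(u^{-j}(1+T)-1)\pmod{\omega_{r-1}(u^{-j}(1+T)-1)}$ and $\omega_{r-1}$ vanishes on the $p^{r-1}$-st roots of unity, this reduces to a substitution $T\mapsto\zeta-1$ into $P^{\delta}_{r,j}$, where $\zeta$ is the value of $u^{j}\theta$ on $u$ divided by $u^{j}$. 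Using the explicit shape \eqref{poly1} of $P^{\delta}_{r,j}$ together with $(U_{p})^{*}\phi^{*}_{\Psi}=a_{p}\phi^{*}_{\Psi}$, one obtains
\[
\asaipad(\Psi)(u^{j}\theta)=\frac{\mathfrak{e}_{p}(\Psi,\theta,j)}{(a^{\sigma}-a)^{j}\,j!\,{k\choose j}^{2}}\ \sum_{t\in\Zpr}\bigl\langle\phi^{*}_{\Psi},\ \asaieisen^{k,j}_{p^{r},\frkn,at}\bigr\rangle\,\psi(t),
\]
where $\psi$ is the finite-order character of conductor $p^{r}$ read off from $u^{j}\theta$ and $\mathfrak{e}_{p}(\Psi,\theta,j)=a_{p}^{-r}$; the case of trivial $\theta$ ($r=0$) is identical except that the level-lowering relation in Theorem~\ref{thmlw} produces the Euler factor $\mathfrak{e}_{p}(\Psi,\theta,j)=1-p^{j}/a_{p}$ and the single element $\asaieisen^{k,j}_{1,\frkn,a}$.

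\emph{Identifying the $L$-value.} Next I would compute the finite sum. By Definition~\ref{twstdasai} and the remark after it, $\asaieisen^{k,j}_{p^{r},\frkn,at}$ is $p^{jr}(\kappa_{at/p^{r}})_{*}\iota_{*}\CG^{[k,k,j]}$ applied to the $c$-regularised class $\ceis^{2k-2j}_{p^{2r}N}=(c^{2}-c^{2j-2k}\langle c\rangle)\mathrm{Eis}^{2k-2j}_{p^{2r}N}$, attached to a weight $2k-2j+2$ Eisenstein series. Pairing this against the modular symbol $\phi^{*}_{\Psi}=j^{*}(\eta_{\Psi}/\Omega_{\Psi})$ and summing over $t$ against $\psi$ is, after unwinding the maps $\kappa_{at/p^{r}}$, $\iota$, $s_{p^{r}}$ as in the proof of \cite[Proposition~4.4]{CW}, a Rankin--Selberg period integral over $\YFss(\frkn)$ --- exactly the integral evaluated in \cite[Section~7C]{CW} in the proof of \cite[Theorem~7.5]{CW}. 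Unfolding produces the Dirichlet series $\sum_{(n,p)=1}c(n\Of,\Psi)\overline{\theta}(n)n^{-(j+1)}$ together with the missing Euler factors, i.e.\ $\Las(\Psi,\overline{\theta},j+1)$, multiplied by: the archimedean constant $\frac{(-1)^{k+1}j!\sqrt{-D}}{2(2\pi i)^{j+1}}$; the normalisation $p^{jr}$ from the definition of $\asaieisen$; the Gauss sum $G(\theta)$ from the additive-character sum over $t$; the inverse period $\Omega_{\Psi}^{-1}$; and the factor $c^{2}-c^{2j-2k}\epsilon_{\Psi}(c^{-1})\theta(c)^{2}$ coming from the $c$-regularisation evaluated against the diamond action, the nebentypus of $\Psi$ and $\theta$ (the square on $\theta(c)$ recording the two tensor factors of the Asai construction). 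The factors $(a^{\sigma}-a)^{j}=(\sqrt{-D})^{j}$, $j!$ and ${k\choose j}^{2}$ in the denominator of $P^{\delta}_{r,j}$ cancel against the combinatorial constants produced by $\CG^{[k,k,j]}$ (compare Theorem~\ref{polynomial}), leaving exactly the constant $C(c,k,j)$ with a single $\sqrt{-D}$. Combining with the first step yields the asserted interpolation formula.

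\emph{The vanishing case and the main difficulty.} When $(-1)^{j}\theta(-1)=-1$ I would argue that the finite sum of Step~1 vanishes for the standard parity reason (as in \cite[Section~7C]{CW}): the modular symbol $\phi^{*}_{\Psi}$ lies in a definite eigenspace for the involution of $\YFss(\frkn)$ induced by $\begin{pmatrix}-1&0\\0&1\end{pmatrix}$, while this involution acts on the Asai--Eisenstein elements through $t\mapsto -t$ together with the sign $(-1)^{j}$ coming from the degree-$j$ Clebsch--Gordan datum, so that the $\psi$-isotypic combination $\sum_{t}\langle\phi^{*}_{\Psi},\asaieisen^{k,j}_{p^{r},\frkn,at}\rangle\psi(t)$ gets multiplied by a sign that is $\neq 1$ precisely when $(-1)^{j}\theta(-1)=-1$, forcing it to be zero. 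The only point that genuinely requires justification is that the Loeffler--Williams finite-level computation survives the passage to the non-ordinary setting; but that computation only pairs $\phi^{*}_{\Psi}$ --- the modular symbol of the fixed $p$-stabilised eigenform --- against Asai--Eisenstein elements, and Hida's ordinary projector appearing there acts as the identity on $\langle\phi^{*}_{\Psi},-\rangle$ since $\phi^{*}_{\Psi}$ is a $U_{p}$-eigenvector. Hence the computation goes through verbatim when $a_{p}$ is a non-unit, the only structural change being that $\asaipad(\Psi)$ now lies in $\HH_{E,v_{p}(a_{p})}(\Zp^{\times})$ rather than in the Iwasawa algebra; all the genuinely new analytic input --- the growth bounds and the patching of twists --- has already been supplied by Theorems~\ref{thmasai1}, \ref{patch2} and \ref{patch3}, so the remaining work is the routine bookkeeping of constants imported from \cite{CW}.
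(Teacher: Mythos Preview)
Your approach is essentially the same as the paper's: reduce the evaluation of $\asaipad(\Psi)$ at $u^{j}\theta$ to evaluating $P^{\delta}_{r,j}(\zeta-1)$ via the congruence of Theorem~\ref{thmasai1}, obtain the finite sum
\[
\frac{\mathfrak{e}_{p}(\Psi,\theta,j)}{(a^{\sigma}-a)^{j}\,j!\,{k\choose j}^{2}}\sum_{t\in\Zpr}\langle\phi^{*}_{\Psi},\asaieisen^{k,j}_{p^{r},\frkn,at}\rangle\,\theta(t),
\]
and then invoke the finite-level period computation of \cite[Section~7C, Theorem~7.5]{CW}. The paper simply cites that result at the final step; your extra paragraph unpacking the Rankin--Selberg integral and parity vanishing is correct in outline and consistent with what happens in \cite{CW}.

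One correction: your sentence ``Hida's ordinary projector appearing there acts as the identity on $\langle\phi^{*}_{\Psi},-\rangle$ since $\phi^{*}_{\Psi}$ is a $U_{p}$-eigenvector'' is wrong as stated. For a non-ordinary eigenvector, $\mathrm{e}_{\mathrm{ord},*}$ annihilates, it does not fix. The right observation is that the ordinary projector in \cite{CW} is used only to build the \emph{measure} as an inverse limit; the finite-level identity relating $\langle\phi^{*}_{\Psi},\asaieisen^{k,j}_{p^{r},\frkn,at}\rangle$ to the $L$-value (their Section~7C) is a direct cohomological pairing with no projector involved, and therefore carries over verbatim. That is all you need here, since Theorem~\ref{thmasai1} has already replaced the ordinary-projector limit by the polynomial patching.
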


\begin{proof}
    First, we assume $r>1$ and $\delta \in \Delta^{*}$. 
    Since $\Zpr\cong\Delta\times \ZZ/p^{r-1}$, write $\theta=\delta\cdot\Theta$, where $\delta$ is a character on $\Delta$ and $\Theta$ is a character on $\ZZ/p^{r-1}$.

    For any $p^{r-1}$-th root $\zeta$, $u^{j}\zeta$ is a root of polynomial $\omega_{r-1}(u^{-j}\gamma_{0}-1)$.

    Now for $\delta\in\Delta^{*}$, from Theorem \ref{thmasai1}, we have
    \begin{equation}
    \label{thmeqn}
        \asaipaddel(\Psi)\equiv P^{\delta}_{r,j}(u^{-j}\gamma_{0}-1) \mod \omega_{r-1}(u^{-j}\gamma_{0}-1)
    \end{equation}
    Hence, to calculate $\asaipad(\Psi,\theta,j)$ is equivalent to evaluating $\asaipaddel(\Psi)$ at $u^{j}\Theta$, ie, by putting $\gamma_{0}=u^{j}\zeta$, for $p^{r-1}$-th root of unity $\zeta$.

    From equation \eqref{thmeqn}, it sufficient to calculate $P^{\delta}_{r,j}(\zeta-1)$.
    Therefore, we get
    \begin{align*}
        P^{\delta}_{r,j}(\zeta-1)&= \dfrac{a_{p}^{-r}}{(a^{\sigma}-a)^{j}j!{k\choose j}^2}\sum_{t\in\Zpr}\left\langle \phi^{*}_{\Psi},\asaieisen^{k,j}_{p^{r},\frkn,at} \right\rangle \delta(t)(\zeta)^{\logt}, \\[1em]
        &=\dfrac{a_{p}^{-r}}{(a^{\sigma}-a)^{j}j!{k\choose j}^2}\sum_{t\in\Zpr}\left\langle \phi^{*}_{\Psi},\asaieisen^{k,j}_{p^{r},\frkn,at} \right\rangle \delta(t)\Theta(t),\\[1em]
        &= \dfrac{a_{p}^{-r}}{(a^{\sigma}-a)^{j}j!{k\choose j}^2}\sum_{t\in\Zpr}\left\langle \phi^{*}_{\Psi},\asaieisen^{k,j}_{p^{r},\frkn,at} \right\rangle \theta(t),
    \end{align*}
    since $\logt$ is a unique non-negative integer $< p^{r-1}$ and $\Theta$ is a character on $\ZZ/p^{r}$.
    Hence, we have deduced
    \[\asaipad(\Psi,\theta,j) = \dfrac{a_{p}^{-r}}{(a^{\sigma}-a)^{j}j!{k\choose j}^2}\sum_{t\in\Zpr}\left\langle \phi^{*}_{\Psi},\asaieisen^{k,j}_{p^{r},\frkn,at} \right\rangle \theta(t). \]

    Now, if $r=1$ and $\delta$ is non-trivial, following the calculations similar to the above case, we get
    \[\asaipad(\Psi, \theta, j) = \dfrac{a_{p}^{-1}}{(a^{\sigma}-a)^{j} j! {k\choose j}^{2}} \sum_{t\in (\ZZ/p)^{\times}} \langle \phi^{*}_{\Psi}, \asaieisen^{k,j}_{p,\frkn, at} \rangle \delta(t).\]

    Lastly, if $r=1$ and $\delta$ is the trivial character, then $\asaipad(\Psi,j)$ is evaluating $\asaipad(\Psi)$ at $u^{j}$.
    Therefore, we get
    \begin{align*}
        \asaipad(\Psi,j) &= P^{\mathrm{triv}}_{1,j}(u^{j}-1), \\
        &= \dfrac{1}{(a^{\sigma}-a)^{j}j!{k\choose j}^2}\left( 1- \dfrac{p^j}{a_{p}}\right)\langle \phi^{*}_{\Psi}, \asaieisen^{k,j}_{1,\frkn,a}\rangle.
    \end{align*}
    The theorem then follows from \cite[Theorem 7.5]{CW}.
\end{proof}

\subsection{$p$-adic distributions without $c$}
\label{withoutc}
Recall Kato's Siegel unit $\prescript{}{c}{g_{N}}\in\OO(Y_{\QQ,1}(N))^\times$ from Section \ref{asaieisendef}.
Note that if $c,d\in\ZZ_{\geq1}$ coprime to $6N$, then we have
\[(d^{2} - \langle d \rangle)\prescript{}{c}{g_{N}} = (c^{2} - \langle c \rangle)\prescript{}{d}{g_{N}}. \]
Thus, the dependence on $c$ can be removed after extending scalars to $\QQ$.
More precisely, there is an element $g_{N}\in\OO(Y_{\QQ,1}(N))^{\times}\otimes\QQ$ such that $\prescript{}{c}{g_{N}}=(c^{2}-\langle c \rangle)\cdot g_{N}$ for any choice of $c$.
Similarly, for any integer $k>0$, we have
\[\ceis^{k}_{N}=(c^{2}- c^{-k}\langle c \rangle)\mathrm{Eis}^{k}_N\in \Ho(Y_{\QQ,1}(N), T_{k}(\Qp)),\]
where $\ceis^{k}_{N}\in\Ho(Y_{\QQ,1}(N), T_{k}(\Zp))$ appearing in the Definition \ref{twstdasai}.
See \cite[Theorem 4.4.4]{KLZ1} for the details.
Following this path, we prove:
\begin{prop}
    \label{crid}
    Let $\epsilon_{\Psi}: (\OO_{F}/\frkn)^\times \to \OO_{E}^\times$ be the nebentypus of the Bianchi cusp form $\Psi$.
    Assume:
    \begin{enumerate}
        \item the restriction $\epsilon_{\Psi}|_{(\ZZ/N)^\times}$ is non-trivial;
        \item $\epsilon_{\Psi}|_{(\ZZ/N)^\times}$ does not have $p$-power conductor. 
    \end{enumerate}
    Then there exists a distribution $L^{\mathrm{As}}_{p}(\Psi)\in \HH_{E,v_{p}(a_{p})}(\GG)$. such that
    $$L^{\mathrm{As}}_{p}(\Psi) = \dfrac{1}{\left(c^{2}-c^{-2k}\epsilon_{\Psi}(c^{-1})\left(\gamma_{0}^{\log_{u}(c)}\right)^{2}\right)}\cdot\asaipad(\Psi)$$
for all valid $c\in\ZZ_{>1}$, where $\log_{u}(c)$ is the unique positive integer such that $(u\tilde{zeta}^{\log_{u}(c)}=c$ for appropriate $(p-1)$-th root of unity.
\end{prop}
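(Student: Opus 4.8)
The plan is to run the standard descent that removes the auxiliary integer $c$ from an Euler system. Throughout I write $\prescript{}{c}{L^{\mathrm{As}}_{p}}(\Psi)$ for the distribution $\asaipad(\Psi)$ of Section~\ref{patchsec2}, to stress that the construction depends on $c$ only through Kato's Siegel unit $\cgn=(c^{2}-\langle c\rangle)g_{N}$ and, via the moment maps and Theorem~\ref{polynomial}, through the Eisenstein class $\ceis^{2k}_{p^{2r}N}=(c^{2}-c^{-2k}\langle c\rangle)\mathrm{Eis}^{2k}_{p^{2r}N}$; every other ingredient — the maps $\iota,u_{a},s_{m},\kappa_{a/m}$, the Clebsch--Gordan and moment maps, the pairing with $\phi^{*}_{\Psi}$, and the patching of Lemma~\ref{patchlemm} — is $c$-independent. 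For a valid $c$ set $\theta_{c}:=c^{2}-c^{-2k}\epsilon_{\Psi}(c^{-1})[c]^{2}\in\Lambda_{E}(\Zp^{\times})\subset\HH_{E,v_{p}(a_{p})}(\Zp^{\times})$; a routine computation in the Iwasawa algebra identifies, for $0\le j\le k$ and $\theta$ of $p$-power conductor, the value $\theta_{c}(u^{j}\theta)$ with the Euler-type factor $c^{2}-c^{2j-2k}\epsilon_{\Psi}(c^{-1})\theta(c)^{2}$ appearing inside $C(c,k,j)$ in Theorem~\ref{interpolation}.

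\textbf{Step 1 (a cross relation).} I would first prove that for any two integers $c,d>1$ coprime to $6\frkn$,
\[ \theta_{d}\cdot\prescript{}{c}{L^{\mathrm{As}}_{p}}(\Psi)=\theta_{c}\cdot\prescript{}{d}{L^{\mathrm{As}}_{p}}(\Psi)\qquad\text{in }\HH_{E,v_{p}(a_{p})}(\Zp^{\times}). \]
One route is to push the identity $(d^{2}-\langle d\rangle)\cgn=(c^{2}-\langle c\rangle)\prescript{}{d}{g_{N}}$ (and its weight-$2k$ analogue) through every arrow defining $\asaieisen^{k,j}_{p^{r},\frkn,at}$ and pair with $\phi^{*}_{\Psi}$: the diamond operator $\langle c\rangle$ on $\YQQ(p^{2r}N)$, transported to $\YFss(\frkn)$ and restricted to the $\Psi$-line, acts by the scalar $c^{-2k}\epsilon_{\Psi}(c^{-1})$ together with the shift $t\mapsto c^{2}t$ of the summation index in $\cPhi^{k,j,r}_{\frkn,a}$, which on the cyclotomic variable is multiplication by $[c]^{2}$ — i.e.\ by $\theta_{c}$. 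I would prefer the shorter argument: both sides lie in $\HH_{E,v_{p}(a_{p})}(\Zp^{\times})$ with $v_{p}(a_{p})<k+1$, so by the uniqueness underlying Lemma~\ref{patchlemm} (Amice--Vélu/Perrin-Riou/Büyükboduk--Lei) they are determined by their values at the arithmetic characters $u^{j}\theta$, $0\le j\le k$, $\theta$ of $p$-power conductor; by Theorem~\ref{interpolation} the value at such a character of $\prescript{}{c}{L^{\mathrm{As}}_{p}}(\Psi)$ is $\theta_{c}(u^{j}\theta)$ times a quantity not depending on $c$, so that of $\theta_{d}\cdot\prescript{}{c}{L^{\mathrm{As}}_{p}}(\Psi)$ is $\theta_{c}(u^{j}\theta)\,\theta_{d}(u^{j}\theta)$ times that same quantity, which is symmetric in $c$ and $d$; hence the two sides agree at every such character and therefore coincide.

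\textbf{Step 2 (Bézout).} It now suffices to exhibit valid $c_{1},c_{2}$ with $\theta_{c_{1}}\Lambda_{E}(\Zp^{\times})+\theta_{c_{2}}\Lambda_{E}(\Zp^{\times})=\Lambda_{E}(\Zp^{\times})$: choosing $x_{1},x_{2}\in\Lambda_{E}(\Zp^{\times})$ with $x_{1}\theta_{c_{1}}+x_{2}\theta_{c_{2}}=1$ and setting
\[ L^{\mathrm{As}}_{p}(\Psi):=x_{1}\cdot\prescript{}{c_{1}}{L^{\mathrm{As}}_{p}}(\Psi)+x_{2}\cdot\prescript{}{c_{2}}{L^{\mathrm{As}}_{p}}(\Psi)\in\HH_{E,v_{p}(a_{p})}(\Zp^{\times}), \]
Step 1 gives $\theta_{c}\cdot L^{\mathrm{As}}_{p}(\Psi)=x_{1}\theta_{c_{1}}\prescript{}{c}{L^{\mathrm{As}}_{p}}(\Psi)+x_{2}\theta_{c_{2}}\prescript{}{c}{L^{\mathrm{As}}_{p}}(\Psi)=\prescript{}{c}{L^{\mathrm{As}}_{p}}(\Psi)$ for every valid $c$, which is the assertion. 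Since $\Lambda_{E}(\Zp^{\times})\cong\prod_{\delta\in\Delta^{*}}e_{\delta}\Lambda_{E}$ with each $e_{\delta}\Lambda_{E}\cong\OO_{E}[[T]][1/p]$ a one-dimensional UFD, hence a PID, coprimality of $\theta_{c_{1}},\theta_{c_{2}}$ amounts to: for every character $\delta$ of $\Delta$ the power series $(\theta_{c})_{\delta}(T)=c^{2}-c^{-2k}\epsilon_{\Psi}(c^{-1})\delta(c)^{2}(1+T)^{2\log_{u}\langle c\rangle}$ have, over all valid $c$, no common zero in the open unit disk. This is the crux, and it is exactly where both hypotheses on $\epsilon_{\Psi}|_{(\ZZ/N\ZZ)^{\times}}$ enter: if $z$ were such a common zero, then for two integers $c_{1}\equiv c_{2}\pmod{p^{M}}$ ($M$ large, both coprime to $6\frkn$) the defining equations $c_{i}^{2k+2}\epsilon_{\Psi}(c_{i})=\delta(c_{i})^{2}(1+z)^{2\log_{u}\langle c_{i}\rangle}$ force $\epsilon_{\Psi}(c_{1})\equiv\epsilon_{\Psi}(c_{2})$ modulo a high power of the maximal ideal, hence $\epsilon_{\Psi}(c_{1})=\epsilon_{\Psi}(c_{2})$; but because $\epsilon_{\Psi}|_{(\ZZ/N\ZZ)^{\times}}$ is non-trivial with conductor divisible by some prime $\ell\neq p$, the Chinese remainder theorem provides $c_{1},c_{2}$ congruent modulo any prescribed power of $p$ yet with $\epsilon_{\Psi}(c_{1})\neq\epsilon_{\Psi}(c_{2})$, a contradiction. (When $\epsilon_{\Psi}|_{(\ZZ/N\ZZ)^{\times}}$ is trivial every $\theta_{c}$ vanishes at $T=u^{k+1}-1$ in the $\delta=\varepsilon^{k+1}$-component, so the hypothesis cannot be dropped, and this also explains the precise shape of the $c$-factor.)

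Finally I would note that $L^{\mathrm{As}}_{p}(\Psi)$ does not depend on the choices $c_{1},c_{2},x_{1},x_{2}$: for any $c$ with $c\not\equiv1\pmod p$ the element $\theta_{c}$ is a non-zero-divisor in $\HH_{E,v_{p}(a_{p})}(\Zp^{\times})$, so the relations $\theta_{c}\cdot L^{\mathrm{As}}_{p}(\Psi)=\prescript{}{c}{L^{\mathrm{As}}_{p}}(\Psi)$ pin it down. The one genuinely delicate point is the no-common-zero statement of Step~2; Step~1 and the remaining bookkeeping are formal once the interpolation-uniqueness route is adopted.
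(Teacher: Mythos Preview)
Your argument is essentially correct but takes a longer path than the paper. The paper's proof (following \cite[Proposition~6.7]{CW}) establishes the same Step~1 cross relation $\theta_{d}\cdot\prescript{}{c}{L^{\mathrm{As}}_{p}}(\Psi)=\theta_{c}\cdot\prescript{}{d}{L^{\mathrm{As}}_{p}}(\Psi)$, but obtains it directly from the Siegel--unit identity $(d^{2}-\langle d\rangle)\prescript{}{c}{g_{N}}=(c^{2}-\langle c\rangle)\prescript{}{d}{g_{N}}$ pushed through the construction of $\asaieisen^{k,j}_{p^{r},\frkn,at}$, rather than via interpolation and Amice--V\'elu uniqueness; your route is valid and arguably slicker, since it spares you from tracking the diamond operator through every map. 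The real divergence is Step~2: instead of B\'ezout with two auxiliary integers, the paper simply observes that one can choose a \emph{single} $d$ for which $\theta_{d}$ is already a unit in $E\otimes\OO_{E}[[\Zp^{\times}]]$ --- take $d$ coprime to $6\frkn$ with $d\equiv 1\pmod{p^{M}}$ for $M$ large and $\epsilon_{\Psi}(d)\neq 1$ (possible by CRT precisely because the conductor is not a $p$-power), so that $\theta_{d}$ is $p$-adically close to the nonzero scalar $d^{2}(1-d^{-2k-2}\epsilon_{\Psi}(d^{-1}))$ and hence a unit --- and then sets $L^{\mathrm{As}}_{p}(\Psi)\coloneqq\theta_{d}^{-1}\cdot\prescript{}{d}{L^{\mathrm{As}}_{p}}(\Psi)$, the cross relation giving independence of $d$.

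Your B\'ezout approach works too, but there is a small logical slip in Step~2: you write that coprimality of $\theta_{c_{1}},\theta_{c_{2}}$ ``amounts to'' the $(\theta_{c})_{\delta}$ having no common zero \emph{over all valid $c$}; what your argument actually proves is the latter statement, and your $M$ depends on the hypothetical common zero $z$, so you have not exhibited two specific $c_{1},c_{2}$ that work uniformly. What does follow is that the ideal generated by all the $\theta_{c}$ is the unit ideal, whence by Noetherianity finitely many suffice --- not necessarily two. This is easily repaired (and your linear-combination definition adapts at once to finitely many $c$'s), but the paper's single-unit approach sidesteps the issue entirely.
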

Before proving this proposition, note that $L^{\mathrm{As}}_{p}(\Psi)$ satisfies the interpolation property described in Theorem \ref{interpolation} without the factor involving $c$. 
In particular, for any integer $0\leq j \leq k$ and for any Dirichlet character $\theta$ of conductor $p^{r}$, we have
\begin{equation}
\label{eqqq}
    L^{\mathrm{As}}_{p}(\Psi)(u^{j}\theta) = \dfrac{C'(k,j)G(\theta)}{\Omega_{\Psi}}\times\mathfrak{e}_{p}(\Psi,\theta,j) \Las(\Psi,\overline{\theta},j+1),
\end{equation}
where 
\[C'(k,j)=\begin{cases} \dfrac{(-1)^{k+1}\cdot p^{jr} j! (\sqrt{-D})}{2\cdot (2\pi i)^{j+1}} &\text{ if } (-1)^{j}\theta(-1)=1, \\[1em]
0 &\text{ if } (-1)^{j}\theta(-1)=-1, \end{cases}\]
and $\mathfrak{e}_{p}(\Psi,\theta,j)$ is the same one appearing in Theorem \ref{interpolation}. 

\begin{proof}[Proof of Proposition \ref{crid}]
   We follow \cite[Proposition 6.7]{CW}.
   Denote $\left(c^{2}-c^{-2k}\epsilon_{\Psi}(c^{-1})\left(\gamma_{0}^{\log_{u}(c)}\right)^{2}\right)$ by $\upsilon_c$.
   For any positive integers $c,d$ coprime to $6Np$, consider 
   \[\upsilon_d\cdot\asaipad(\Psi).\]
   Then $$\left((\upsilon_d\cdot\asaipad(\Psi))-(\upsilon_c\cdot\prescript{}{d}{L_{p}^{\mathrm{As}}(\Psi)})\right)(\chi^{j}\theta)=0$$
   for any integer $0\leq j \leq k$ and for any Dirichlet character $\theta$ of conductor $p^r$.
   Thus, by the uniqueness of the construction $\asaipad(\Psi)$, we can conclude
   \[(\upsilon_d\cdot\asaipad(\Psi)) = (\upsilon_c\cdot\prescript{}{d}{L_{p}^{\mathrm{As}}(\Psi)}).\]
   In other words, $\upsilon_{d}\cdot\asaipad(\Psi)$ is symmetric in $c$ and $d$.
   
   Now we can choose $d$ such that $\upsilon_d$ is a unit in $E\otimes\OO_{E}[[\GG]]$, since the conductor of $\epsilon_{\Psi}$ is not a power of $p$. 
   Note that we get $\epsilon_{\Psi}(d^{-1})$, since the dual of the diamond operator $\langle d \rangle$ is $\langle d^{-1} \rangle$ under the perfect Poincar\'e duality.
   
   Therefore, if we define
   \begin{equation}
       L_{p}^{\mathrm{As}}(\Psi)\coloneqq \dfrac{1}{\left(c^{2}-c^{-2k}\epsilon_{\Psi}(c^{-1})\left(\gamma_{0}^{\log_{u}(c)}\right)^{2}\right)}\asaipad(\Psi),
   \end{equation}
   then $L^{\mathrm{As}}_{p}(\Psi)$ is independent of $c$ and lies in $\HH_{E,v_{p}(a_{p})}(\GG)$.
\end{proof}

\begin{rem}
    There is a typo in the equation of $L_{p}^{\mathrm{As}}(\Psi)$ appearing in \cite[Proposition 6.7]{CW}. 
    There should be $\epsilon_{\Psi}(c^{-1})$ instead of $\epsilon_{\Psi}(c)$.
\end{rem}

\begin{rem}
    Even if $\epsilon_{\Psi}|_{(\ZZ/N)^{\times}}$ has $p$-power conductor, we can still define $$L^{\mathrm{As}}_{p}(\Psi)=\dfrac{1}{\left(c^{2}-c^{-2k}\epsilon_{\Psi}(c^{-1})\left(\gamma_{0}^{\log_{u}(c)}\right)^{2}\right)}\asaipad(\Psi),$$ but this element will not be in the distribution module $\HH_{E, v_{p}(a_{p})}(\GG)$ but in the fraction field of $\HH_{E,v_{p}(a_{p})}(\GG)$.
    This new "distribution" will have poles, i.e., it will be a meromorphic function. 
\end{rem}

\section{Signed $p$-adic Asai $L$-functions of Bianchi modular forms}
\label{dec}
This section addresses the factorization of unbounded $p$-adic $L$-functions (i.e. $p$-adic distributions) into bounded signed $p$-adic $L$-functions (i.e. $p$-adic measures) in the spirit of Pollack, Sprung, and Lei--Loeffler--Zerbes.
We will apply the machinery of logarithmic matrices, developed by the author in \cite{MD}, to obtain this factorization.

\subsection{Wach modules and logarithmic matrix}
\label{logmat}
Define, for any real number $w\geq 0$, 
\begin{align*}
  \HH_{E,w}(\GG_{1})&=\left\{\sum_{n=0}^{\infty}c_{n}(\gamma_{0}-1)^{n} : \sup_{n}\dfrac{|c_{n}|_{p}}{n^{w}}<\infty\right\},
  \end{align*}
  and recall
  $$\HH_{E,w}(\GG)=\left\{\sum_{\sigma\in\Delta}\sum_{n=0}^{\infty}c_{n,\sigma}\cdot\sigma\cdot(\gamma_{0}-1)^{n} : \sup_{n}\dfrac{|c_{n,\sigma}|_{p}}{n^{w}}<\infty, \forall \sigma\in\Delta\right\}.$$
Note that
\begin{align}
    \HH_{E,w}&\cong\HH_{E,w}(\GG_{1}),\label{eq1}\\
    \HH_{E,w}(\Zp^{\times})&\cong \HH_{E,w}(\GG) \label{eq2},
\end{align}
where $\HH_{E,w}$ and $\HH_{E,w}(\Zp^{\times})$ are defined in Section \ref{cont}.
Also, let $\HH_{E}(\GG_{1})=\bigcup_{w\geq0}\HH_{E,w}(\GG_{1})$ and $\HH_{E}(\GG)=\bigcup_{w\geq0}\HH_{E,w}(\GG)$. 
Similar to $\HH_{E,w}$, if $f\in\HH_{E,w}(\GG_1)$, then we say $f$ has growth rate $O(\log_{p}^{w})$.

Let $X$ be a variable and write $\Brig$ for the ring of power series $f(X)\in \Qp[[X]]$ such that $f$ converges everywhere inside the unit $p$-adic disk. We equip $\Brig$ with the actions by the Frobeneius $\varphi: X \mapsto (1+X)^{p}-1$ and $\sigma: X\mapsto (1+X)^{\chi(\sigma)}-1$ for $\sigma\in\GG$, where $\chi$ is the $p$-adic cyclotomic character such that $\chi(\gamma_{0})=u$.
Let $\BrigE=E\otimes\Brig$ and inside $\BrigE$ there is a subring $\AAA^{+}_{E}=\OO_{E}[[X]]$ which is also equipped with the actions of $\varphi$ and $\GG$.  
Note that there exists a $\Lambda_{E}(\GG)$-module isomorphism between $(\BrigE)^{\psi=0}$ and $\HH_{E}(\GG)$ called the Mellin transform, where $\psi$ is a left inverse of $\varphi$ such that $\varphi\circ\psi(f(X))=\frac{1}{p}\sum_{\zeta^{p}=1}f(\zeta(1+X)-1)$.
Moreover, we can identify $(\AAA^{+}_{E})^{\psi=0}$ with $\Lambda_{\OO_{E}}(\GG)$. 
See \cite[B.2.8]{PR1} for more details.

Fix $a\in\OO_{E}$ with $v_{p}(a) > \left\lfloor \dfrac{k}{p-1} \right\rfloor$ and $k\geq 0$ be an integer.
Let $\alpha,\beta$ be the distinct roots of polynomial $X^{2}-aX+vp^{k+1}$, for some $v$ such that $v^{1/2}\in\OO_{E}^\times$. 
Then by the methods in \cite{MD} (which are based on methods in \cite{BLZ}), there exists a $E$-linear crystalline $\Gal(\overline{\Qp}/\Qp)$-representation $V$ and an $\OO_{E}$-stable lattice $T$ in $V$ such that:
\begin{enumerate}
    \item there exists an $\OO_{E}$-basis $v_{1}, v_{2}$ of Dieudonn\'e module $\Dcris(T)$ such that the matrix of $\varphi$ with respect to this basis is
    \[A_{\varphi} =\MatA.\]
    \item we have a Wach module $\NN(T)$ with an $\AAA^{+}_{E}$-basis $n_{1}, n_{2}$ such that, for $i=1,2$, 
    \[n_{i}\equiv v_{i} \mod X.\]
    See \cite{BER1}, \cite{BLZ}, \cite{LLZ2}, and \cite{MD} for definitions and more details about Wach modules.
    \item  $(\vp^{*}\NN(T))^{\psi=0}$ is a $\Lambda_{\OO_{E}}$-module with basis $(1+\pi)\vp(n_{1}), (1+\pi)\vp(n_{1})$, where $(\vp^{*}\NN(T))^{\psi=0}$ is $\AAA^{+}_{E}$-submodule of $\NN(T)[X^{-1}]$ generated by $\vp(\NN(T))$
    \item $(\BrigE)^{\psi=0}\otimes\Dcris(T)$ is an $\HH_{E}(\GG)$-module with basis $(1+\pi)\otimes v_{1}, (1+\pi)\otimes v_{2}$.
\end{enumerate}

\begin{defi}[Logarithmic matrix]
We define the $2\times2$ logarithmic matrix $\Mbar\in M_{2,2}(\HH_{E}(\GG))$ as the change of the basis matrix for the following $\HH_{E}(\GG)$-module homomorphism:
\[(\vp^{*}\NN(T))^{\psi=0} \hookrightarrow (\BrigE)^{\psi=0}\otimes\Dcris(T),\]
i.e.,
\begin{equation}
    \begin{bmatrix}
        (1+\pi)\vp(n_{1}) & (1+\pi)\vp(n_{1})  
    \end{bmatrix}=\begin{bmatrix}
        (1+\pi)\otimes v_{1} & (1+\pi)\otimes v_{2}  
    \end{bmatrix}\Mbar.
\end{equation}
\end{defi}
Note that $\Mbar$ is unique and in fact $\Mbar\in M_{2,2}(\HH_{E}(\GG_1))$. 
See \cite[Section 4]{MD} and \cite{LLZ2} for the details about the construction of $\Mbar$.
\begin{prop}
\label{mbarprop}
    \hfill
    \begin{enumerate}
        \item The elements in the first row of matrix $Q^{-1}\Mbar$ have growth $O(\log^{v_{p}(\alpha)}_{p})$ and the elements in the second row have growth $O(\log_{p}^{v_{p}(\beta)})$, where $Q=\MatQ$.\vspace{0.9mm}
        \item The second row of $A_{\vp}^{-n}\Mbar$ is divisible by $\prod_{i=0}^{k} \Phi_{n-1}(u^{-i}\gamma_{0}-1)$ over $\HH_{E}(\GG_1)$, where $\Phi_{n-1}(T)=\dfrac{\omega_{n-1}(T)}{\omega_{n-2}(T)}$ is the $p^{n-1}$-th cyclotomic polynomial.\vspace{0.9mm}
        \item The determinant of $\Mbar$ is $\dfrac{\log_{p,k+1}(\gamma_{0})}{\delta_{k+1}(\gamma_{0})}$ up to a unit in $\Lambda_{E}(\GG_{1})$.
        Here $\log_{p,k+1}(\gamma_{0})=\prod_{i=0}^{k}\log_{p}(u^{-i}\gamma_{0})$ and $\delta_{k+1}(\gamma_{0})= \prod_{i=0}^{k}(u^{-i}\gamma_{0}-1)$.\vspace{0.9mm}
        \item Moreover, $\det(\Mbar)$ is $O(\log_{p}^{k+1})$ and $\log^{k+1}_{p}$ is $O(\det(\Mbar))$. 
    \end{enumerate}
\end{prop}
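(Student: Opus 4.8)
The plan is to derive the statement from part~(3) of Proposition~\ref{mbarprop} together with the standard behaviour of the order of growth on $\HH_E(\GG_1)$. For a nonzero $f\in\HH_E(\GG_1)$ let $w(f)$ denote the least real $w\ge 0$ with $f\in\HH_{E,w}(\GG_1)$; then the assertion ``$\det(\Mbar)$ is $O(\log_p^{k+1})$ and $\log_p^{k+1}$ is $O(\det(\Mbar))$'' is exactly the statement $w(\det(\Mbar)) = k+1 = w(\log_p(\gamma_0)^{k+1})$. By part~(3) we have $\det(\Mbar) = \lambda\cdot\dfrac{\log_{p,k+1}(\gamma_0)}{\delta_{k+1}(\gamma_0)}$ with $\lambda\in\Lambda_E(\GG_1)^\times$, where $\log_{p,k+1}(\gamma_0) = \prod_{i=0}^{k}\log_p(u^{-i}\gamma_0)$ and $\delta_{k+1}(\gamma_0) = \prod_{i=0}^{k}(u^{-i}\gamma_0-1)$; the quotient genuinely lies in $\HH_E(\GG_1)$ because, for each $i$, $\log_p(u^{-i}\gamma_0)$ has a simple zero at $\gamma_0 = u^{i}$, which is the unique zero of $u^{-i}\gamma_0-1$ on the open unit disc, so each individual quotient is holomorphic there.

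The steps I would carry out are as follows. First, recall that the order function is additive under multiplication, $w(fg)=w(f)+w(g)$, that $w(f+g) = \max\{w(f),w(g)\}$ whenever $w(f)\ne w(g)$, and that $w(\log_p(\gamma_0))=1$: the first two follow from the multiplicativity of the sup-norms on the discs $|T|\le\rho_r$ that cut out the spaces $\HH_{E,w}(\GG_1)$ (see \cite[\S B]{PR1}), and the third is the elementary estimate that the coefficients of $\log_p(\gamma_0)=\tfrac{1}{\log_p u}\sum_{n\ge1}\tfrac{(-1)^{n-1}}{n}(\gamma_0-1)^n$ satisfy $|c_n|_p=O(n)$ while $|c_{p^m}|_p=p^{m+v_p(\log_p u)}$, so that $\log_p(\gamma_0)\in\HH_{E,1}(\GG_1)\setminus\bigcup_{w<1}\HH_{E,w}(\GG_1)$. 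Next, from $\log(u^{-i}z)=\log(u^{-i})+\log(z)$ one gets $\log_p(u^{-i}\gamma_0)=\log_p(\gamma_0)+c_i$ for a scalar $c_i\in E$, hence $w(\log_p(u^{-i}\gamma_0))=\max\{1,0\}=1$, while $w(u^{-i}\gamma_0-1)=0$ and $w(\delta_{k+1}(\gamma_0))=0$, these being nonzero elements of $\Lambda_E(\GG_1)=\HH_{E,0}(\GG_1)$. Finally, combine: $w(\log_{p,k+1}(\gamma_0))=\sum_{i=0}^{k}w(\log_p(u^{-i}\gamma_0))=k+1$, so from $\det(\Mbar)\cdot\delta_{k+1}(\gamma_0)=\lambda\cdot\log_{p,k+1}(\gamma_0)$ and additivity of $w$ one obtains $w(\det(\Mbar))=(k+1)-0+0=k+1$; and $w(\log_p(\gamma_0)^{k+1})=(k+1)\,w(\log_p(\gamma_0))=k+1$ as well, which is the claim. (One may equivalently note that $\log_{p,k+1}(\gamma_0)=\prod_{i=0}^{k}(\log_p(\gamma_0)+c_i)$ is a degree-$(k+1)$ polynomial in $\log_p(\gamma_0)$ with nonzero scalar leading coefficient, so that its top term $\log_p(\gamma_0)^{k+1}$, of order $k+1$, cannot be cancelled by the lower-order terms $\log_p(\gamma_0)^{j}$, $j\le k$, which have order $j<k+1$.)

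I expect the only genuine input to be the first step, namely the additivity of the order function under products — equivalently, the multiplicativity of the disc norms $\|\cdot\|_{\rho_r}$ together with the fact that $r^{-1}\log\|F\|_{\rho_r}$ recovers $w(f)$ up to a fixed constant. This is standard in the theory of $\HH_E(\GG_1)$ and enters in exactly the same way in \cite{MD} and \cite{LLZ2}, so once it is quoted the rest is bookkeeping. As an independent check of the lower bound $w(\det(\Mbar))\ge k+1$, one can instead invoke part~(2): the second row of $A_{\varphi}^{-n}\Mbar$ is divisible by $\prod_{i=0}^{k}\Phi_{n-1}(u^{-i}\gamma_0-1)$ over $\HH_E(\GG_1)$, so $\det(\Mbar)$ vanishes on $k+1$ distinct families of $p^{n-1}$-power roots of unity for every $n$, and a nonzero element of $\HH_{E,w}(\GG_1)$ with $w<k+1$ cannot vanish at that many points as $n\to\infty$.
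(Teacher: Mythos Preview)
Your proposal addresses only part~(4) of the proposition, deriving it from part~(3). That derivation is correct: once you know $\det(\Mbar)$ equals $\log_{p,k+1}(\gamma_0)/\delta_{k+1}(\gamma_0)$ up to a unit in $\Lambda_E(\GG_1)$, the additivity of the growth order together with $w(\log_p(u^{-i}\gamma_0))=1$ and $w(u^{-i}\gamma_0-1)=0$ gives $w(\det(\Mbar))=k+1$, and the alternative lower-bound check via part~(2) is a nice sanity check. So as a proof of~(4) assuming~(3), this is fine and in fact more explicit than what the paper does.

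However, the statement you were asked to prove is the full proposition, and you say nothing about parts~(1), (2), and~(3). These are not formal consequences of one another; they require the actual construction of the Wach module $\NN(T)$ and the explicit analysis of the matrix $P$ with $\varphi(n_1,n_2)=(n_1,n_2)P$ carried out in \cite{MD}. The paper itself does not reprove these either --- its proof is simply a reference to \cite[Proposition~5.2, Lemma~5.2, Lemma~5.3, Lemma~5.4]{MD}, with the remark that the argument there (stated for $k\ge 2$) goes through verbatim for $k\ge 0$. So compared to the paper, you have supplied an independent and self-contained argument for~(4) but have omitted the citation (or any argument) for~(1)--(3). If your intent was to treat (1)--(3) as established by \cite{MD} and only flesh out~(4), you should say so explicitly; as written, the proposal reads as a proof of the whole proposition and is therefore incomplete.
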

\begin{proof}
    See \cite[Proposition 5.2, Lemma 5.2, Lemma5.3, Lemma 5.4]{MD} for the details.
    Note that in \cite{MD}, $k\geq 2$. Here, we are taking $k\geq0$, but otherwise, everything is identical.
\end{proof}


\subsection{Decomposition of the distribution into measures}
\label{decomp}
Assume prime $p$ splits in $F$ as $p\OO_{F}=\PP\PPP$. We enlarge $E/\Qp$, if necessary, so that it contains $F$ and all Hecke eigenvalues of $\Psi$.

Let $\Psi$ be a Bianchi cusp form of weight $(k,k)$ and level $\mathcal{N}$ coprime to $p$. 

Furthermore, we assume that:
\begin{enumerate}
    \item $\Psi$ is $\PP$-non-ordinary and $\PPP$-ordinary. In other words, for $\qqq\in\pset$, if $a_{\qqq}$ is the $T_{\qqq}$ Hecke eigenvalue, then $v_{p}(a_{\PP})>0$ and $v_{p}(a_{\PPP})=0$.
    \item $v_{p}(a_{\PP})> \left\lfloor \dfrac{k}{p-1} \right\rfloor$.
\end{enumerate}

For the simplicity of calculations, assume the nebentypus of $\Psi$ is trivial.
For $\qqq\in\pset$, let $\alpha_{\qqq}$ and $\beta_{\qqq}$ be the roots of Hecke polynomial $X^{2}-a_{\qqq}X + p^{k+1}$.
We also assume $\alpha_{\PP}\neq\beta_{\PP}$.
Since we have assumed $\Psi$ is $\PPP$-non-ordinary, we know $\alpha_{\PPP}$  a $p$-adic unit and $v_{p}(\beta_{\PPP})=k+1$.
We also know that $v_{p}(\alpha_{\PP}), v_{p}(\beta_{\PP})<k+1$. 
Let $\tilde{\alpha}=\alpha_{\PPP}\alpha_{\PP}$ and $\tilde{\beta}=\alpha_{\PPP}\beta_{\PP}$.
Then $v_{p}(\rta)=v_{p}(\alpha_{\PP})$ and $v_{p}(\rtb)=v_{p}(\beta_{\PP})$.

We consider the following two $p$-stabilizations of $\Psi$:
\[\Psi^{\rta}=\Psi^{\alpha_{\PPP}\alpha_{\PP}} \text{ and } \Psi^{\rtb}=\Psi^{\alpha_{\PPP}\beta_{\PP}},\]
such that $\Psi^{\rta}$ and $\Psi^{\rtb}$ are of level $p\mathcal{N}$ and
\begin{align*}
    U_{p}(\Psi^{\rta})&=\rta\cdot\Psi^{\rta},\\
    U_{p}(\Psi^{\rtb})&=\rtb\cdot\Psi^{\rtb}.
\end{align*}
For more details about the $p$-stabilizations of Bianchi modular forms, we refer to \cite[Section 3.3]{LPS}.

Thus, for $\dagger\in\{\rta, \rtb\},$ from Theorem \ref{thmasai1} and Theorem \ref{interpolation}, we can attach a $p$-adic distribution $\asaipad(\Psi^{\dagger})$ to $\Psi^{\dagger}$ such that
\begin{itemize}
    \item \textit{(Growth property)}$\asaipad(\Psi^{\dagger}) \in \HH_{E, v_{p}(\dagger)}(\GG)$;\vspace{0.9mm}
    \item \textit{(Interpolation)} for any Dirichlet character $\theta$ of conductor $p^r$ and any integer $0\leq j \leq k$, we have
    \[\asaipad(\Psi^{\dagger},\theta, j) = \dfrac{*}{\dagger^{r}}\Las(\Psi^{\dagger}, \overline{\theta}, j+1),\]
    where 
    \[*=\begin{cases}
        \text{some non-zero explicit constant independent of }\dagger &\text{ if } (-1)^{j}\theta(-1)=1;\\
        0 &\text{ if } (-1)^{j}\theta(-1)=-1.
    \end{cases}\]
\end{itemize}

We have the following decomposition theorem:
\begin{thm}
  \label{thmasai2}
  There exist $\prescript{}{c}{L^{\mathrm{As},\sharp}_{p}}, \prescript{}{c}{L^{\mathrm{As},\flat}_{p}} \in E\otimes\OO_{E}[[\GG]]$ and a logarithmic matrix $\tilde{\Mbar}\in M_{2,2}(\HH_{E}(\GG))$ such that 
  \begin{equation}
      \begin{pmatrix}
          \asaipad(\Psi^{\rta})\\[1em]
          \asaipad(\Psi^{\rtb})
      \end{pmatrix} = \tilde{Q}^{-1}\tilde{\Mbar}\begin{pmatrix}
          \prescript{}{c}{L^{\mathrm{As},\sharp}_{p}}\\[1em]
          \prescript{}{c}{L^{\mathrm{As},\flat}_{p}}
      \end{pmatrix},
  \end{equation}\\\\
  where $\tilde{Q}=\begin{pmatrix}
      \rta & -\rtb\\[0.5em]
      -\alpha^{2}_{\PPP}p^{k+1} & \alpha^{2}_{\PPP}p^{k+1}
  \end{pmatrix}$, and $\tilde{\Mbar}$ satisfies properties from Proposition \ref{mbarprop} after replacing $\alpha,\beta$ with $\rta,\rtb$ respectively. 
\end{thm}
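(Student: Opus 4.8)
The plan is to mimic, in the Betti/Asai setting, the decomposition argument from \cite{MD} (and its precursors \cite{Spr1}, \cite{LLZ1}, \cite{Pol}). First I would recall that the two $p$-stabilizations $\Psi^{\rta}$ and $\Psi^{\rtb}$ come from the same newform $\Psi$ of level $\mathcal N$ coprime to $p$, so their Asai distributions $\asaipad(\Psi^{\rta})$ and $\asaipad(\Psi^{\rtb})$ are built out of the \emph{same} system of Asai--Eisenstein classes $\asaieisen^{k,j}_{p^r,\frkn,at}$, the only difference being that one divides by $(U_p)^r_*$ acting through the eigenvalue $\rta$ in one case and $\rtb$ in the other. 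Concretely, writing $z^{(r)}_j$ for the relevant paired quantity $\frac{1}{(a^\sigma-a)^j j!\binom kj^2}\langle\phi^*_\Psi,\sum_{t}\asaieisen^{k,j}_{p^r,\frkn,at}\otimes\delta(t)(1+T)^{\log_u t}\rangle$ and assembling these over $0\le j\le k$ into a column vector, the construction in Theorem \ref{thmasai1} exhibits $\asaipad(\Psi^{\dagger})$ as (a limit of) $\dagger^{-r}$ times the vector $\underline z^{(r)}$ read against the basis $A_\vp^{-r}$, exactly as in the elliptic case. Thus the first step is to record the identity expressing the pair $(\asaipad(\Psi^{\rta}),\asaipad(\Psi^{\rtb}))^{t}$ as $\tilde Q^{-1}$ times the column $(\vec e_1\cdot(\text{stuff}),\vec e_2\cdot(\text{stuff}))$, where $\tilde Q=\begin{pmatrix}\rta & -\rtb\\ -\alpha^2_{\PPP}p^{k+1} & \alpha^2_{\PPP}p^{k+1}\end{pmatrix}$ arises from the Vandermonde-type change of basis between the $\varphi$-eigenbasis $\{\rta,\rtb\}$ and the crystalline basis $v_1,v_2$ (scaled by $\alpha_{\PPP}^2$, which is the $\PPP$-part contribution, a $p$-adic unit by $\PPP$-ordinarity).

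Next I would invoke Proposition \ref{mbarprop} for the logarithmic matrix $\tilde\Mbar$ attached to the crystalline representation whose $\varphi$ has eigenvalues $\rta,\rtb$ (this is exactly the situation of Section \ref{logmat} with $a$ replaced by $a_{\PP}\alpha_{\PPP}^2$ or rather the appropriate trace, and with $v=\alpha_{\PPP}^2$ so that $v^{1/2}=\alpha_{\PPP}\in\OO_E^\times$; the hypothesis $v_p(a_{\PP})>\lfloor k/(p-1)\rfloor$ is precisely what is needed for the Wach module construction). The key structural input from Proposition \ref{mbarprop}(2) is that the second row of $A_\vp^{-n}\tilde\Mbar$ is divisible by $\prod_{i=0}^{k}\Phi_{n-1}(u^{-i}\gamma_0-1)$, and more generally (as used in \cite{MD} and \cite{LLZ2}) that $A_\vp^{-r}\tilde\Mbar$ has entries whose "denominators at cyclotomic characters" are controlled. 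I would then define the signed distributions $\prescript{}{c}{L^{\mathrm{As},\sharp}_p}$ and $\prescript{}{c}{L^{\mathrm{As},\flat}_p}$ by the formula $\begin{pmatrix}\prescript{}{c}{L^{\mathrm{As},\sharp}_p}\\ \prescript{}{c}{L^{\mathrm{As},\flat}_p}\end{pmatrix} = \tilde\Mbar^{-1}\tilde Q\begin{pmatrix}\asaipad(\Psi^{\rta})\\ \asaipad(\Psi^{\rtb})\end{pmatrix}$, a priori an element of $M_{2,1}(\mathrm{Frac}\,\HH_E(\GG))$, and the whole content of the theorem is that these two entries actually lie in the bounded Iwasawa algebra $E\otimes\OO_E[\Delta][[T]]$.

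The main obstacle — and the heart of the proof — is establishing that boundedness. The strategy I would follow is the one from \cite{MD}: one shows that the vector $\tilde Q\,(\asaipad(\Psi^{\rta}),\asaipad(\Psi^{\rtb}))^{t}$ is, for each $r$, congruent modulo $\prod_{i=0}^{k}\omega_{r-1}(u^{-i}\gamma_0-1)$ to $A_\vp^{-r}$ applied to an \emph{integral} vector coming from the classes $\langle\phi^*_\Psi,\asaieisen^{k,j}_{p^r,\frkn,at}\rangle$; this uses the interpolation/congruence theorems \ref{patch2} and \ref{patch3} proved above, which give exactly the $p^{jr}$-divisibility matching the denominators introduced by inverting $A_\vp$. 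Combining this with the divisibility properties of $\tilde\Mbar$ from Proposition \ref{mbarprop}(2), one checks that $\tilde\Mbar^{-1}$ times this vector has entries that are $p$-integral and $O(1)$ (no log growth), hence by the $w=0$ case of the distribution spaces lie in $\HH_{E,0}(\GG)=\Lambda_E(\GG)\cong E\otimes\OO_E[\Delta][[T]]$; the crucial cancellation is that $\det\tilde\Mbar$, which by Proposition \ref{mbarprop}(3) is $\log_{p,k+1}(\gamma_0)/\delta_{k+1}(\gamma_0)$ up to a unit, exactly divides into the product of cyclotomic polynomials coming from the congruences, so no poles survive. I would finally verify the growth bookkeeping using Proposition \ref{mbarprop}(1) and the known growth $v_p(\rta)=v_p(\alpha_{\PP})$, $v_p(\rtb)=v_p(\beta_{\PP})$ of the two Asai distributions, confirming that the factorization is consistent with the admissible-distribution spaces on both sides.
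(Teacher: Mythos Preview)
Your outline is essentially correct, but it is substantially more elaborate than what the paper actually does. The paper's proof is a three-line verification of the hypotheses of \cite[Theorem 5.5]{MD}, which it then invokes as a black box: it checks (i) that $v_p(\rta+\rtb)=v_p(\alpha_{\PPP})+v_p(a_{\PP})=v_p(a_{\PP})>\lfloor k/(p-1)\rfloor$, (ii) that each $\asaipad(\Psi^\dagger)$ has growth $O(\log_p^{v_p(\dagger)})$, and (iii) that the interpolation formulae for $\dagger=\rta,\rtb$ take the form $c_{\theta,j}/\dagger^r$ with a numerator $c_{\theta,j}$ independent of $\dagger$. These three inputs are exactly the hypotheses of the cited theorem, and the decomposition follows immediately.

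What you have written is, in effect, a sketch of the \emph{proof} of \cite[Theorem 5.5]{MD} transplanted into the Asai setting: you unpack the construction of $\tilde{\Mbar}$, define the signed $L$-functions via $\tilde{\Mbar}^{-1}\tilde Q$, and argue boundedness from the congruence theorems and the divisibility properties of the logarithmic matrix. This is a valid route and more self-contained, but it duplicates work that the paper has already offloaded to \cite{MD}. One small slip: when you say ``$a$ replaced by $a_{\PP}\alpha_{\PPP}^2$,'' the correct trace is $\rta+\rtb=\alpha_{\PPP}a_{\PP}$ (and the product is $\rta\rtb=\alpha_{\PPP}^2 p^{k+1}$, so indeed $v=\alpha_{\PPP}^2$ as you note). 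Also, the paper does not feed Theorems \ref{patch2} and \ref{patch3} directly into the decomposition; those theorems were used earlier to \emph{construct} the distributions, and at this stage only the resulting growth and interpolation properties are needed.
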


\begin{proof}
    Since 
    \begin{align*}
        v_{p}(\rta+\rtb)&=v_{p}(\alpha_{\PPP}(\alpha_{\PP}+\beta_{\PP})),\\
        &= v_{p}(\alpha_{\PPP})+v_{p}(a_{\PP}),\\
        &= v_{p}(a_{\PPP})>\left\lfloor \dfrac{k}{p-1} \right\rfloor,
    \end{align*}
    and for $\dagger\in\{\rta,\rtb\}$, $\asaipad(\Psi^{\dagger})$ has growth rate $O(\log_{p}^{v_{p}(\dagger)})$ and interpolation property $$\asaipad(\Psi^{\dagger},\theta,j)=\dfrac{c_{\theta,j}}{\dagger^{r}},$$
    where $c_{\theta,j}$ is independent of $\dagger$, the theorem follows from \cite[Theorem 5.5]{MD}.
\end{proof}

We conclude this subsection with some remarks.
\begin{rem}
We will explain briefly why we considered the polynomial $X^{2}- a_{\qq}X+ p^{k+1}$.
This is because of the Euler factors of the Asai $L$-function when prime $p$ splits in $F$.
Recall from the equation \eqref{lfactors}, when $p$ splits as $\PP\PPP$ in $\OO_F$ and is coprime to the level $\mathcal{N}$, the local $L$-factor at $p$ of the Asai $L$-function $L^{\mathrm{As}}(\Psi,s)$ is 
\[\dfrac{1}{P^{\mathrm{As}}_{p}(\Psi,s)}= (1-\alpha_{\PP}\alpha_{\PPP}p^{-s})(1-\alpha_{\PP}\beta_{\PPP}p^{-s})(1-\beta_{\PP}\alpha_{\PPP}p^{-s})(1-\beta_{\PP}\beta_{\PPP}p^{-s}),\]
where $\alpha_{\qqq},\beta_{\qqq}$ are the roots of polynomial $X^{2}-a_{\qqq}X + p^{k+1}$, for $\qqq\in\pset$.
For Galois representation theoretic interpretation, see \cite[Section 4]{Ghate1}.
\end{rem}

\begin{rem}
    We can assume $\Psi$ to be $\PP$-ordinary and $\PPP$-non-ordinary. 
    If we assume that $\Psi$ is non-ordinary at both the primes $\PP$ and $\PPP$, then we might get into trouble.
    Note that we are in the finite slope situation, i.e., for $\qq\in\pset, v_{p}(a_{\qq})< k+1$.
    Now if $\alpha_{\qq},\beta_{\qq}$ are the roots of $X^{2}-a_{\qq}X+p^{k+1}$, then we have four $p$-stabilizations $\Psi^{\bullet,\dagger}$, where $\bullet\in\{\alpha_{\PP}, \beta_{\PP}\}$ and $\dagger\in\{\alpha_{\PPP}, \beta_{\PPP}\}$ and all $\Psi^{\bullet,\dagger}$ are $p$-non-ordinary. 
     For simplicity, assume $k=0$.
     Now suppose we want to decompose the distributions $\asaipad(\Psi^{\alpha_{\PP}\alpha_{\PPP}})$ and $\asaipad(\Psi^{\alpha_{\PP}\beta_{\PPP}})$ into the linear combination of bounded measures using logarithmic matrices.
    We know that $0< v_{p}(\alpha_{\qq}), v_{p}(\beta_{\qq})< 1$.  
    To use logarithmic matrices method, $v_{p}(\alpha_{\PP}^{2}\alpha_{\PPP}\beta_{\PPP})$ should be a positive integer, since from the construction and properties of the logarithmic matrix, we know $\det(\Mbar) \sim O(\log_{p}^{m})$ where $m=v_{p}(\alpha_{\PP}^{2}\alpha_{\PPP}\beta_{\PPP})\in \ZZ_{\geq 1}$.
    But then it might happen that, for example, $v_{p}(\alpha_{\PP}^{2}\alpha_{\PPP}\beta_{\PPP})$ is not an integer, since $v_{p}(\alpha_{\PP}^{2})$ maybe an element in $\QQ$ which is not an integer.

    
\end{rem}
\subsection{Signed $p$-adic Asai $L$-function without $c$}
Recall $\Psi$ is a Bianchi eigenform of weight $(k,k)$ and level $\mathcal{N}$ which is coprime to $p$, such that $\Psi$ is non-ordinary at $\PP$ and ordinary at $\PPP$. 
We furthermore assume:
\begin{enumerate}
    \item The nebentypus $\epsilon_{\Psi}:(\OO_{F}/\mathcal{N})^\times \to \overline{\QQ}^\times$ is non-trivial,
    \item the restriction $\epsilon_{\Psi}|_{(\ZZ/M)^\times}$ is non-trivial, where $M\in\ZZ$ such that $(M)=\ZZ\cap\mathcal{N}$,
    \item $\epsilon_{\Psi}|_{(\ZZ/M)^\times}$ does not have a $p$-power conductor.
\end{enumerate}
For $\qqq\in\pset$, let $\alpha_{\qq}$ and $\beta_{\qq}$ be the roots of Hecke polynomial
$X^{2}- a_{\qq}X + \epsilon_{\Psi}(\qq)p^{k+1},$
where $a_{\qq}$ is $T_{\qq}$-eigenvalue of $\Psi$.
We assume $\alpha_{\qq}\neq \beta_{\qq}$.
Like in the previous subsection, let $\tilde{\alpha}=\alpha_{\PPP}\alpha_{\PP}$ and $\tilde{\beta}=\alpha_{\PPP}\beta_{\PP}$ and consider two $p$-stabilizations $\Psi^{\tilde{\alpha}}$ and $\Psi^{\tilde{\beta}}$ of $\Psi$ with the $U_{p}$-eigenvalues $\tilde{\alpha}$ and $\tilde{\beta}$ respectively.
Note that the nebentypus will not change after the $p$-stabilization.

Thus, for $\bullet\in\{\tilde{\alpha}, \tilde{\beta}\}$, Proposition \ref{crid} implies there exists a $p$-adic distribution $L_{p}^{\mathrm{As}}(\Psi^{\bullet}) \in \HH_{E, v_{p}(\bullet)}(\GG)$ such that
\[L_{p}^{\mathrm{As}}(\Psi^{\bullet}) = \dfrac{1}{\left(c^{2}-c^{-2k}\epsilon_{\Psi}(c^{-1})\left(\gamma_{0}^{\log_{u}(c)}\right)^{2}\right)}\cdot\asaipad(\Psi^{\bullet}),\]
where $c$ is a suitable integer coprime to $6pM$,
since we have imposed the conditions on the nebentypus $\epsilon_{\Psi}$.
It satisfies the following interpolation property: for any Dirichlet character $\theta$ of conductor $p^{r}>1$ and any integer $0\leq j \leq k$, we have
    \[L^{\mathrm{As}}_{p}(\Psi^{\bullet},\theta, j) = \dfrac{*}{\bullet^{r}}\Las(\Psi^{\bullet}, \overline{\theta}, j+1),\]
    where 
    \[*=\begin{cases}
        \text{some non-zero explicit constant independent of }\bullet \text{ and }c &\text{ if } (-1)^{j}\theta(-1)=1;\\
        0 &\text{ if } (-1)^{j}\theta(-1)=-1.
    \end{cases}\]

If we assume $v_{p}(a_{\PP}+\beta_{\PP})> \left\lfloor \dfrac{k}{p-1}\right\rfloor$, then we have the following decomposition theorem similar to Theorem \ref{thmasai2}:
\begin{thm}
    \label{thmasai3}
There exist $L^{\mathrm{As},\sharp}_{p}, L^{\mathrm{As},\flat}_{p} \in E\otimes\OO_{E}[[\GG]]$ and a logarithmic matrix $\tilde{\Mbar}\in M_{2,2}(\HH_{E}(\GG))$ such that 
  \begin{equation}
      \begin{pmatrix}
          L_{p}^{\mathrm{As}}(\Psi^{\rta})\\[1em]
          L_{p}^{\mathrm{As}}(\Psi^{\rtb})
      \end{pmatrix} = \tilde{Q}^{-1}\tilde{\Mbar}\begin{pmatrix}
          L^{\mathrm{As},\sharp}_{p}\\[1em]
          L^{\mathrm{As},\flat}_{p}
      \end{pmatrix},
  \end{equation}\\\\
  where $\tilde{Q}=\begin{pmatrix}
      \rta & -\rtb\\[0.5em]
      -\alpha^{2}_{\PPP}\epsilon_{\Psi}(\PP)p^{k+1} & \alpha^{2}_{\PPP}\epsilon_{\Psi}(\PP)p^{k+1}
  \end{pmatrix}$, and $\tilde{\Mbar}$ satisfies properties from Proposition \ref{mbarprop} after replacing $\alpha,\beta$ with $\rta,\rtb$ respectively.    
  Moreover, for suitable $c$, we have
  \begin{equation*}
      \begin{pmatrix}
          L^{\mathrm{As},\sharp}_{p}\\[1em]
          L^{\mathrm{As},\flat}_{p}
      \end{pmatrix}=
      \dfrac{1}{\left(c^{2}-c^{-2k}\epsilon_{\Psi}(c^{-1})\left(\gamma_{0}^{\log_{u}(c)}\right)^{2}\right)}\begin{pmatrix}
          \prescript{}{c}{L^{\mathrm{As},\sharp}_{p}}\\[1em]
          \prescript{}{c}{L^{\mathrm{As},\flat}_{p}}
      \end{pmatrix}.
  \end{equation*}
\end{thm}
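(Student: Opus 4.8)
The plan is to reduce Theorem \ref{thmasai3} to Theorem \ref{thmasai2} together with Proposition \ref{crid}, rather than re-running the logarithmic-matrix machinery from scratch. The point is that the two $p$-stabilizations $\Psi^{\rta},\Psi^{\rtb}$ of $\Psi$ behave exactly as in Subsection \ref{decomp}: each $\asaipad(\Psi^{\bullet})$ lies in $\HH_{E,v_{p}(\bullet)}(\Zp^{\times})$, the hypothesis $v_{p}(a_{\PP}+\beta_{\PP})>\lfloor k/(p-1)\rfloor$ (which plays the role of condition (2) there, with the obvious adjustment that now the Hecke polynomial is $X^{2}-a_{\qq}X+\epsilon_{\Psi}(\qq)p^{k+1}$), and the interpolation formula has the shape $\asaipad(\Psi^{\bullet},\theta,j)=c_{\theta,j}/\bullet^{r}$ with $c_{\theta,j}$ independent of $\bullet$. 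So the first step is simply to check that all the hypotheses of \cite[Theorem 5.5]{MD} are met for the pair $(\Psi^{\rta},\Psi^{\rtb})$, which gives elements $\prescript{}{c}{L^{\mathrm{As},\sharp}_{p}},\prescript{}{c}{L^{\mathrm{As},\flat}_{p}}\in E\otimes\OO_{E}[\Delta][[T]]$ and a logarithmic matrix $\tilde{\Mbar}$ with
\[
\begin{pmatrix}\asaipad(\Psi^{\rta})\\[0.4em]\asaipad(\Psi^{\rtb})\end{pmatrix}
=\tilde{Q}^{-1}\tilde{\Mbar}\begin{pmatrix}\prescript{}{c}{L^{\mathrm{As},\sharp}_{p}}\\[0.4em]\prescript{}{c}{L^{\mathrm{As},\flat}_{p}}\end{pmatrix},
\]
with $\tilde Q$ as in the statement (the entry $\alpha^{2}_{\PPP}\epsilon_{\Psi}(\PP)p^{k+1}$ replacing $\alpha^{2}_{\PPP}p^{k+1}$ precisely because the constant term of the Hecke polynomial is now $\epsilon_{\Psi}(\PP)p^{k+1}$).

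The second step is to divide through by the factor $(c^{2}-c^{-2k}\epsilon_{\Psi}(c^{-1})[c]^{2})$. By Proposition \ref{crid}, applied to each of $\Psi^{\rta}$ and $\Psi^{\rtb}$ (the conditions on $\epsilon_{\Psi}|_{(\ZZ/M)^{\times}}$ are stated in this subsection, and the $p$-stabilization does not change the nebentypus, so they carry over), the quotients
\[
L_{p}^{\mathrm{As}}(\Psi^{\bullet})=\frac{1}{c^{2}-c^{-2k}\epsilon_{\Psi}(c^{-1})[c]^{2}}\,\asaipad(\Psi^{\bullet})
\]
are genuine elements of $\HH_{E,v_{p}(\bullet)}(\Zp^{\times})$ and are independent of $c$. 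Since the scalar $(c^{2}-c^{-2k}\epsilon_{\Psi}(c^{-1})[c]^{2})\in E\otimes\OO_{E}[[\Zp^{\times}]]$ is central (it lies in the coefficient ring), it commutes past the matrix $\tilde{Q}^{-1}\tilde{\Mbar}$, so dividing both sides of the displayed identity by it yields
\[
\begin{pmatrix}L_{p}^{\mathrm{As}}(\Psi^{\rta})\\[0.4em]L_{p}^{\mathrm{As}}(\Psi^{\rtb})\end{pmatrix}
=\tilde{Q}^{-1}\tilde{\Mbar}\begin{pmatrix}L^{\mathrm{As},\sharp}_{p}\\[0.4em]L^{\mathrm{As},\flat}_{p}\end{pmatrix},
\qquad
\begin{pmatrix}L^{\mathrm{As},\sharp}_{p}\\[0.4em]L^{\mathrm{As},\flat}_{p}\end{pmatrix}
=\frac{1}{c^{2}-c^{-2k}\epsilon_{\Psi}(c^{-1})[c]^{2}}\begin{pmatrix}\prescript{}{c}{L^{\mathrm{As},\sharp}_{p}}\\[0.4em]\prescript{}{c}{L^{\mathrm{As},\flat}_{p}}\end{pmatrix},
\]
which is exactly the claimed decomposition. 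One still has to observe that the right-hand sides $L^{\mathrm{As},\sharp}_{p},L^{\mathrm{As},\flat}_{p}$ so defined lie in $E\otimes\OO_{E}[\Delta][[T]]$ and not merely in the fraction field: this is because $\tilde{\Mbar}$ is invertible over $\HH_{E}(\GG_1)$ away from a locally analytic set, and the left-hand column lies in the bounded module $\HH_{E,v_{p}(\bullet)}(\Zp^{\times})$ with $v_{p}(\rta)+v_{p}(\rtb)$ an integer (equal to $v_{p}(a_{\PPP}^{2}\epsilon_{\Psi}(\PP)p^{k+1})$ under our hypotheses) so that the logarithmic-matrix inversion argument of \cite[Section 5]{MD} produces bounded measures — this is the same boundedness statement already used in Theorem \ref{thmasai2}, so one simply cites it.

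The only genuine point needing care — and the step I would expect to be the main obstacle — is verifying that the numerical hypotheses in \cite[Theorem 5.5]{MD} are compatible with the twisted Hecke polynomial $X^{2}-a_{\qq}X+\epsilon_{\Psi}(\qq)p^{k+1}$ rather than $X^{2}-a_{\qq}X+p^{k+1}$. Concretely one must check that the slopes $v_{p}(\alpha_{\PP}),v_{p}(\beta_{\PP})$ still satisfy $v_{p}(\alpha_{\PP})+v_{p}(\beta_{\PP})=k+1$ (true since $\epsilon_{\Psi}(\PP)$ is a root of unity, hence a unit), that $v_{p}(\rta),v_{p}(\rtb)<k+1$, and that the matrix $A_{\vp}$ underlying $\tilde{\Mbar}$ has the right characteristic polynomial $X^{2}-aX+vp^{k+1}$ with $v=\epsilon_{\Psi}(\PP)\alpha_{\PPP}^{2}$ of the form $v^{1/2}\in\OO_{E}^{\times}$ after enlarging $E$ — which is why $E$ was enlarged to contain $F$, all Hecke eigenvalues, and (implicitly) $\alpha_{\PPP}$, $\epsilon_{\Psi}(\PP)^{1/2}$. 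Once these bookkeeping checks are in place, Proposition \ref{mbarprop} applies verbatim to $\tilde{\Mbar}$ with $\alpha,\beta$ replaced by $\rta,\rtb$, and the theorem follows. I would present the argument as a short deduction, citing Theorem \ref{thmasai2}, Proposition \ref{crid}, and \cite[Theorem 5.5]{MD}, with the slope computation spelled out in one displayed line.
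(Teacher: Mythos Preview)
Your proposal is correct and follows essentially the same route as the paper: the paper's own proof is a single sentence (``Similar to the proof of Theorem \ref{thmasai2}''), and your argument is precisely a fleshed-out version of that reduction --- apply \cite[Theorem 5.5]{MD} as in Theorem \ref{thmasai2} with the Hecke polynomial $X^{2}-a_{\qq}X+\epsilon_{\Psi}(\qq)p^{k+1}$, then divide through by $(c^{2}-c^{-2k}\epsilon_{\Psi}(c^{-1})[c]^{2})$ using Proposition \ref{crid}. Your bookkeeping checks on slopes and on $v=\epsilon_{\Psi}(\PP)\alpha_{\PPP}^{2}$ are the right details to supply.
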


\begin{proof}
    Similar to the proof of Theorem \ref{thmasai2}.
\end{proof}

\section*{Acknowledgments}
I would like to thank my PhD advisor, Antonio Lei, for suggesting this topic to me and for all the helpful discussions, patience, support, and encouragement he provided throughout the completion of this article. 
I would also like to thank Chris Williams for the useful discussions about \cite{CW} and for the clarifications regarding the locally symmetric spaces associated with Bianchi modular forms.  
Finally, I would like to thank the anonymous referee for their valuable comments and suggestions, which greatly helped in improving the exposition.

\nocite{*} 
\printbibliography[title={References}]

\end{document}